\documentclass[11pt,reqno]{amsart}
\usepackage[margin=1.30in]{geometry}
\usepackage{setspace}

\usepackage[utf8]{inputenc}
\usepackage{amsmath,amsfonts,amssymb,amsopn,amscd,amsthm,bbm}
\usepackage{comment}
\usepackage{dsfont}
\usepackage{graphicx}
\usepackage{color}
\usepackage[colorlinks]{hyperref}
\usepackage{epigraph,todonotes}
\usepackage{enumerate}
%
%
%
%
\allowdisplaybreaks[4]



\DeclareMathOperator{\Hc}{ \textrm{ch} }



\def\1{{\mathbf 1}}

\def\pa{\partial}

\def\m{m}
\def\a{\alpha}

\def\d{\delta}
\def\e{\epsilon}

\def\N{{\mathbb N}}
\def\Z{{\mathbb Z}}

\def\R{{\mathbb R}}


\def\E{{\mathbb E}}

\def\X{{\mathbb X}}


\def\Dc{{\mathcal D}}

\def\Fc{{\mathcal F}}

\def\Hc{{\mathcal H}}

\def\Lc{{\mathcal L}}

\def\Pc{{\mathcal P}}

\def\Sc{{\mathcal S}}



\setlength{\footskip}{2cm}

\numberwithin{equation}{section}

\newtheorem{thm}{Theorem}[section]
\newtheorem{proposition}{Proposition}[section]

\newtheorem{definition}{Definition}[section]

\newtheorem{lemma}{Lemma}[section]

\newtheorem{rmk}{Remark}[section]
\newtheorem{assumption}{Assumption}[section]

\newcommand{\ignore}[1]{}
\newcommand{\vertiii}[1]{{\left\vert\kern-0.25ex\left\vert\kern-0.25ex\left\vert #1 
    \right\vert\kern-0.25ex\right\vert\kern-0.25ex\right\vert}}

\title[Comparison of second-order PDEs on Wasserstein space]{Comparison of viscosity solutions for a class of second-order PDEs on the Wasserstein space} 
\author{Erhan Bayraktar}\thanks{E. Bayraktar is partially supported by the National Science Foundation under grant DMS-2106556 and by
the Susan M. Smith chair.}
\address{Department of Mathematics, University of Michigan}
\email{erhan@umich.edu}
\author{Ibrahim Ekren}\thanks{I. Ekren is supported in part by NSF Grant DMS 2007826.}
\address{Department of Mathematics, University of Michigan}
\email{iekren@umich.edu}
\author{Xin Zhang} 
\address{ Department of Finance and Risk Engineering, New York University}
\email{xz1662@nyu.edu}

\keywords{Wasserstein space, second-order PDEs, viscosity solutions, comparison principle, Ishii's Lemma}
\subjclass[2020]{58E30,	90C05}

\begin{document}
\maketitle

\begin{abstract}
We prove a comparison result for viscosity solutions of second-order parabolic partial differential equations in the Wasserstein space. The comparison is valid for semisolutions that are Lipschitz continuous in the measure in a Fourier-Wasserstein metric and uniformly continuous in time. The class of equations we consider is motivated by Mckean-Vlasov control problems with common noise and filtering problems. { The proof of comparison relies on a decomposition of the Wasserstein space, and an application of Ishii's lemma which is tailor-made for the class of equations we consider.} 
\end{abstract}
\section{Introduction}

In this paper, motivated by optimal control of partially observed systems, we show a comparison principle for viscosity solutions of second-order parabolic partial differential equations on Wasserstein space of the form
\begin{align}\label{eq:introparabolic}
-\partial_t v(t,\mu)=h(t, \mu, D_{\mu} v(t,\mu)(\cdot), D_{x \mu} v(t,\mu)(\cdot), \mathcal{H} v(t,\mu)), \quad (t,\mu) \in [0,T) \times \mathcal{P}_2(\mathbb{R}^d),
\end{align}
where we adopt the notion of $L$-derivative w.r.t. the measure variable (see \cite{cdll2019}), and define
\begin{align*}
\mathcal{H}v(t,\mu):= \int D_{x \mu} v(t,\mu)(x) \, d \mu+ \int \int D_{\mu\mu}^2 v(t,\mu)(x,z) \, d \mu d \mu. 
\end{align*}
The equation involves second-order derivatives with respect to the measure, and the Hamiltonian $h$ may be fully nonlinear and degenerate in $\mathcal{H}v$. We actually prove a more general result when $v$ is also dependent on another state variable $y \in \mathbb{R}^d$; see Theorem~\ref{thm:comp}. In filtering problems due to partial observation, one controls the conditional law of the state variable to minimize the payoff, and hence in this framework the value function $v$ depends on the current time and distribution of the unobserved variable. As one application of the comparison principle, we prove that the value function of the stochastic control problem with partial information is the unique viscosity solution of the corresponding Hamilton-Jacobi-Bellman equation; see Theorem~\ref{thm:viscosity_property}. The paper provides the first comparison result for a class of second-order PDEs on a Wasserstein space for Lipschitz sub/super solutions with respect to a Fourier Wasserstein distance under the standard definition of a viscosity solution which is expected to enjoy stability results.

PDEs on Wasserstein space also appear in mean field games and McKean-Vlasov control problems, see e.g. \cite{cdll2019,MR3752669,MR3753660,MR4499277, MR3739204, cosso2021master}, wherein value functions depend on interactions between a large population of particles, and hence the state space becomes $\mathcal{P}_2(\mathbb{R}^d)$, the space of probability measures on $\mathbb{R}^d$ with finite second moment. Using the notion of Wasserstein-$2$ metric $W_2$ from Optimal Transport Theory, this space, which we denote by $(\mathcal{P}_2(\mathbb{R}^d), W_2)$ has an almost-Riemannian geometric structure; see \cite{MR2358290,villani2009optimal}. Various notions of differentiability for functions on Wasserstein space have been defined, and in this paper we adopt the one introduced by Lions in \cite{Lions}. It is stronger than the geometric definition of differentiability, and allows a version of It\^{o}'s formula which is crucial for control problems. For equations on Euclidean space, viscosity theory provides well-posedness for nonlinear degenerate second-order parabolic PDEs, and thus is widely used in stochastic control problems. For second-order equations on Wasserstein space, although it has been shown in \cite{cdll2019} that under uniform elliptic conditions and convexity of coefficients the so-called Master equation in mean field games has regular strong solutions, one still relies on viscosity solutions to tackle nonlinear-degenerate equations.

Let us discuss the main difficulties in proving a comparison result on the Wasserstein space and how we overcame them. Regarding second-order equations on Euclidean space, if both the subsolution $u$ and the supersolution $v$ are regular, the comparison principle can be obtained simply by comparing derivatives of $u$ and $v$ at local maximums of their difference $u-v$. However, solutions of nonlinear degenerate equations are usually not differentiable. Thus, one can adopt the technique of doubling variable, and rely on Ishii's lemma to obtain sub/super jets of $u$ and $v$; see \cite{usersguide}. On Wasserstein space, we also employ the doubling variable technique, and consider $u(\mu)-v(\nu)-\frac{\alpha}{2}\rho_F^2(\mu,\nu)$ where $\mu,\nu\in \mathcal{P}_2(\mathbb{R}^d), \alpha >0$ and $\rho_F$, see \eqref{eq:defrhoF}, is a Fourier-Wasserstein metric to be defined for our purposes. There are two main issues to tackle, $(i)$ due to the lack of local compactness on $\mathcal{P}_2(\mathbb{R}^d)$, $(\mu,\nu) \mapsto u(\mu)-v(\nu)-\frac{\alpha}{2}\rho_F^2(\mu,\nu)$ may not obtain local maximums, $(ii)$ one cannot directly rely on Ishii's lemma, which was so far available in Hilbert spaces. To solve the first one, we apply a smooth variation principle; see \cite{MR4607651,2023arXiv230315160C,cosso2021master}. By adding a smooth perturbation term constructed from a Gaussian regularization of the dyadic transport metric $\rho_{\sigma}$, see \eqref{eq:defrhosigma}, to $u(\mu)-v(\nu)-\frac{\alpha}{2}\rho_F^2(\mu,\nu)$, we obtain local maximums of the perturbed function. After posting this paper, Soner and Yan solved the issue of non-compactness in \cite{2023arXiv230804097M} by an elementary technique of moment penalization, which could be potentially applied to our equation. The second problem is more subtle and is due to the non-smooth and infinite dimensional structure of $\mathcal{P}_2(\R^d)$. 
The key observation in \eqref{eq:introparabolic} is that the second order derivative in measure appears in an integral form, and it is actually of finite-dimensional nature, that is, for any regular function $v: \Pc_2(\R^d) \to \R$.
\begin{align*}
\mathcal{H}v(\mu)= \frac{d^2}{dx^2} v((x+Id)_{\sharp}\mu)|_{x=0} \in \R^{d \times d}. 
\end{align*}
Therefore, similarly to \cite[Section 5]{gangbo2021finite}, we identify $\Pc_2(\R^d)$ with a product space $\R^d \times  \Pc_{2,0}(\R^d)$ via $\mu \mapsto (m(\mu),\Sc_0(\mu))$, where $m(\mu)$ stands for the mean of $\mu$, and $\Sc_0(\mu):=(I_d-m(\mu))_{\sharp} \mu$ is the centered part of $\mu$, and $\Pc_{2,0}(\R^d)$ is the set of probability measures with the mean $0$. Then $\mathcal{H} v$ is the Hessian of $v:\R^d \times\Pc_{2,0}(\R^d)\to \R$ in the first component, and we define $\rho_F^2(\mu,\nu)$ to be the sum of $|m(\mu)-m(\nu)|^2$ and a smooth distance between $\Sc_0(\mu), \Sc_0(\nu)$. Then, in Theorem~\ref{ishii}, we obtain the partial Hessian $\mathcal{H}$ by invoking Ishii's lemma on $\R^d$, and obtain $D_\mu, D_{x \mu}$ derivatives from $\rho_F^2(\mu,\nu)$.

Stochastic filtering problems investigate how to make inferences about an evolving system using partial observations, and we refer to \cite{MR2454694}  for an introduction. Under the assumption of uniform ellipticity, \cite{martini2023kolmogorov} shows the existence of classical solutions to the Kolmogorov equations in the space of measures that arise naturally in filtering problems such as the Zakai and Kushner-Stratonovich equations. For the control problem in the degenerate case, assuming conditional distributions are absolutely continuous with densities in $L^2(\R^d)$, \cite{gozzi2000hamilton} proves the uniqueness of solution invoking the viscosity theory of PDEs on Hilbert space. Viewing $\Pc_2(\R^d)$ as a quotient of Hilbert space, \cite{bandini2019randomized} lifts PDEs on Wasserstein space to the ones on Hilbert space, and provides the uniqueness result. { However, the relation between solutions to the lifted PDEs and the original ones are unclear; see \cite[Remark 3.6]{cosso2021master} for more details. The lift of a smooth function on Wasserstein space may not be second-order Fr\'{e}chet differentiable; see e.g. \cite[Example 2.3]{MR3630288}. Moreover, there is no proper lift version of $D_{x\mu}$ on $L^2$ space. Therefore, in this paper, we would like to study the problem intrinsically on Wasserstein space}. We show a comparison principle for a class of nonlinear degenerate second-order parabolic PDEs on Wasserstein space, and thus give a PDE characterization of the value function in filtering problem. Moreover, \eqref{eq:introparabolic} also covers first-order HJB equations corresponding to McKean-Vlasov control problems.

There is another motivation of this paper. It is well-known that solutions to PDEs can be approximated using machine learning-based numerical methods. On the other hand, some learning problems such as expert prediction can be viewed as finite difference algorithms of PDEs; see \cite{2019arXiv190202368B,2020arXiv200308457B,MR4120922,MR4053484,2020arXiv200813703C,2020arXiv200712732D,JMLR:v24:22-1001, MR4253765}. Therefore, the long-time behavior of such learning problems is characterized by certain parabolic PDEs. In \cite{JMLR:v24:22-1001}, the authors interpreted a learning problem as a discrete time zero-sum game with incomplete information. After properly scaling in both time and space variable, the value function converges to a second-order parabolic equation on Wasserstein space of type \eqref{eq:introparabolic} in a formal way; see \cite[Equation (4.11)]{JMLR:v24:22-1001}. To verify convergence rigorously, viscosity theory has to be established. This paper provides the uniqueness result as a first step, and the the stability of viscosity solutions of \eqref{eq:introparabolic} is an ongoing project.

Let us mention some related references.    The decomposition of $\Pc_{2}(\R^d)$ into $\R^d \times  \Pc_{2,0}(\R^d)$ can be found in \cite{gangbo2021finite}, where the authors define intrinsic viscosity solutions, a notion that is the closest to ours. Compared to \cite{gangbo2021finite} when defining second-order jets, we do not require the existence of derivatives at all directions but only an integral of these derivatives. Our second-order jets being larger, our definition is more stringent than the definition of intrinsic viscosity solutions in \cite{gangbo2021finite}, and any viscosity solution in our sense is an intrinsic viscosity solution in the sense of \cite{gangbo2021finite}. 
Note that \cite{gangbo2021finite} does not provide a comparison of the intrinsic viscosity solution for second-order equations and their main comparison result relies on lifting the functions of measures into functionals on an $L^2$ space and using the comparison of viscosity solutions on a Hilbert space. For the viscosity theory of PDEs on Hilbert space, we refer the readers to \cite{lions1988viscosity1,lions1989viscosity2,lions1989viscosity3,fabbri2017stochastic}. Viscosity solutions of first-order PDEs involving $D_{x\mu}$ have been studied in \cite{MR4604196,2023arXiv230804097M,soner2022viscosity,burzoni2020viscosity,MR4595996, cosso2021master,2023arXiv230815174D}. It is worth noting that our formulation of the Fourier-Wasserstein metric, represented as $\rho_F$, as well as the variational function, symbolized as $\rho_{\sigma}$, drew significant inspiration from the concepts discussed in the first-order case in \cite{soner2022viscosity} and \cite{cosso2021master}, respectively. { We should mention that the same idea of change of variable, $(x,\mu) \mapsto (I_d+x)_{\#} \mu $, and another notion of differentiability were adopted in \cite{2023arXiv230604283B}.  There Bertucci proved a comparison principle for a class of equations where the Hamiltonian appears  in the form of 
\begin{align*}
h(\mu,D_{\mu} v)+\frac{\sigma(t)^2}{2}  \mathcal{H}v(t,\mu)
\end{align*}
for some proper choice of $h$ and deterministic function $\sigma$. 
The proof in this paper relies on reformulating the problem in a Hilbert space and using Ishii's lemma developed for Hilbert spaces in \cite{lions1989viscosity3}.  This change of variable method was also used in the analysis of mean field game with common noise in \cite{cdll2019}, in which case the volatility of common noise is assumed to be a constant. } { We would like to mention that after this paper was posted, the idea of partial Hessian has been adopted also in \cite{2023arXiv231210322C, 2023arXiv231202324D} to prove uniqueness for second-order PDEs.}
As shown in \cite{cox2021controlled}, second-order PDEs on the Wasserstein space are also related to measure-valued martingale optimization problems. 
Relying on the specific martingale structure, that paper manages to reduce the problem to finitely supported measures where the usual viscosity theory can be applied. 
\cite{cox2021controlled} proves a very general uniqueness results under a novel definition of viscosity solution which might not enjoy the stability property of viscosity solutions. PDEs on the Wasserstein space also appear in mean-field optimal stopping problems \cite{MR4604196,MR4613226,2023arXiv230709278P}. 

The remainder of the paper is organized as follows. In Section 2, we will introduce different topological equivalent metrics used in this paper, and provide definitions for differentiability of functions on Wasserstein space. In Section 3, we present a version of Ishii's lemma on Wasserstein space, Theorem~\ref{ishii}, and the comparison result, Theorem~\ref{thm:comp}. Section 4 is devoted to an application of comparison principle in stochastic control with partial observation. Section 5 provides some preliminary estimates for derivatives of $\rho_F$ and $\rho_{\sigma}$. The proofs of Theorem~\ref{ishii}, Theorem~\ref{thm:comp}, and all the arguments in Section 4 can be found in Sections 6, 7, and 8 respectively.

We will end this section by explaining the frequently used notation.

\noindent {\bf{Notation.}} We denote by $\Pc_2(\R^d)$ the set of Borel probability measures $\mu$ in $\R^d$ with a finite second moment, define $m(\mu):=\int x \, \mu(dx) \in \R^d$, and by $\Pc_{2,0}(\R^d)$ the elements of $\Pc_2(\R^d)$ with mean $0$. Both the identity matrix of dimension $d$ and the identity map from $\R^d$ to $\R^d$ are denoted by $I_d$. The centered part of measure $\mu$ is given by $\Sc_0(\mu):=(I_d- m(\mu))_{\sharp} \mu\in \Pc_{2,0}(\R^d)$, and the covariance matrix of $\mu$ is defined as 
\begin{align*}
    V(\mu)=V(\Sc_0(\mu)):=\int (x-m(\mu))(x-m( \mu))^\top  \, \mu(dx) \in \mathcal{S}_d,
\end{align*}
where $\mathcal{S}_d$ stands for the set of positive-semidefinite matrices of dimension $d$. For $M,N \in \mathcal{S}_d$, $|M-N|$ is taken as the Frobenious norm of matrices, i.e., $|M-N|^2:=Tr((M-N)^\top (M-N))$. For any complex number $z$, we denote its conjugate by $z^*$ and its real part by $Re(z)$.

We denote by $\mu*\nu$ the convolution of $\mu, \nu \in \Pc_2(\R^d)$ defined via 
$$\int f(x)(\mu*\nu)(dx)=\iint f(x+y)\mu(dx)\nu(dy) \quad \mbox{ for all }f \in C_b(\R^d).$$
For any $\sigma>0$, $N_{\sigma}$ denotes the $d$-dimensional Gaussian distribution with mean $0$ and covariance matrix $\sigma I_d$. 

For $k \in \R^d$, we define the Fourier basis function $f_k$ by $f_k(x)=(2\pi)^{-d/2}e^{-ik\cdot x}$, and for any $\mu \in \Pc_2(\R^d)$ we take $F_k(\mu)=\int f_k(x) \, \mu(dx)$.   The Fourier transform $\mathcal{F}g$ of a function $g \in L^2(\R^d)$ is defined as $\mathcal{F}g(k):=\int f_k(x)g(x) \, dx.$ Throughout the paper, let us fix $\lambda=d+7$; see Remark~\ref{rmk:choiceoflambda} for a short explanation of this choice. Let $\mathbb{H}_{\lambda}$ be the Hilbert space 
\begin{align*}
    \mathbb{H}_{\lambda}:=\left\{ f \in L^2 (\R^d): \,   (1+|k|^2)^{\frac{\lambda}{2}} \mathcal{F}f  \in L^2(\R^d)\right\},
\end{align*}
where for all $f \in \mathbb{H}_{\lambda}$ we define
\begin{align*}
\lVert f \rVert_{\lambda}^2:= \int (1+|k|^2)^\lambda |\mathcal{F}f(k)|^2 \, dk .
\end{align*}
For any $\mu \in \Pc_2(\R^d)$ and measurable function $f:\R^d \to \R$, we use the short notation $\mu(f)=f([\mu]):=\int f(x) \, \mu(dx)$ if the integral is well-defined. For two probability measures $\mu,\nu\in \Pc_2(\R^d)$, we define $ \lVert \mu-\nu \rVert_{-\lambda}:=\sup_{\lVert f \rVert_{\lambda} \leq 1} (\mu-\nu)(f)$. For any $f:\R^d \to \R$, let us also define its Lipschitz constant and supnorm
$$Lip(f)=\sup_{x,y\in \R^d,x\neq y}\frac{|f(x)-f(y)|}{|x-y|}, \quad  
||f||_\infty:=\sup_{x\in \R^d}|f(x)|.$$



\section{Metrics and derivatives on the space of measures}

\subsection{Different metrics}

We endow $\Pc_2(\R^d)$  with the 2-Wasserstein distance $W_2$, that is, for any $\mu,\nu \in \Pc_2(\R^d)$
\begin{align*}
W_2(\mu,\nu)^2:= \inf_{\pi \in \Pi(\mu,\nu)} \int \frac{1}{2}|x-y|^2 \, \pi(dx,dy),
\end{align*}
where $\Pi(\mu,\nu)$ denotes the collection of probability measures on $\R^d \times \R^d$ with first and second marginals $\mu$ and $\nu$ respectively. 
By \cite[Lemma 2]{auricchio2020equivalence}, we have 
\begin{align}\label{eq:propw}
W_2(\mu,\nu)^2=\frac{1}{2}|m(\mu)-\m(\nu)|^2+W_2(\Sc_0(\mu),\Sc_0(\mu))^2.
\end{align}
As such, $\Pc_2(\R^d)$ admits a natural decomposition as the product of $\R^d$ and $\Pc_{2,0}(\R^d)$ that will be crucial for our comparison result. 

Similarly to \cite{cosso2021master}, we also endow $\Pc_2(\R^d)$ with the metric 
$$W^{(\sigma)}_2(\mu,\nu):=W_2(\mu*N_\sigma,\nu*N_\sigma)$$
so that $(\Pc_2(\R^d),W^{(\sigma)}_2)$ is a complete metric space topologically equivalent to $(\Pc_2(\R^d),W_2)$ thanks to \cite[Lemma 4.2]{cosso2021master}. Fix $\sigma>0$ and let $B_0=(-1,1]^d$. For $l\geq 0$, $\mathfrak{P}_l$ denotes the partition of $B_0$ into $2^{dl}$ translations of $(-2^{-l},2^{-l}]^d$ and for every $n\geq 1$, $B_n=(-2^n,2^n]^d\backslash(-2^{n-1},2^{n-1}]^d$.
With $\delta_{n,l}:=2^{-(4n+2dl)}$, we define 
\begin{align}\label{eq:defrhosigma}
 \rho^2_\sigma(\mu,\nu)&:=|m( \mu)-m(\nu)|^2\\
 &+\sum_{n\geq 0}2^{2n}\sum_{l\geq 0}2^{-2l}\sum_{B\in \mathfrak{P}_l}\sqrt{|((\Sc_0(\mu)-\Sc_0(\nu))*\mathcal{N}_\sigma)((2^nB)\cap B_n)|^2+\delta_{n,l}^2}-\delta_{n,l}  \notag
\end{align}
which is gauge type function for the metric space $\left(\Pc_2(\R^d), W^{(\sigma)}_2\right)$ due to \cite[Lemma 4.3]{cosso2021master} and \eqref{eq:propw}.

Besides $W^{(\sigma)}_2$, we define another metric $\rho_F: \Pc_2(\R^d) \times \Pc_2(\R^d) \to [0,\infty)$ via 
\begin{align}\label{eq:defrhoF}
    \rho^2_F(\mu,\nu):=|m( \mu)-m( \nu)|^2+|V(\mu)-V(\nu)|^2 + \int_{\R^d} \frac{|F_k(\Sc_0(\mu))-F_k(\Sc_0(\nu))|^2}{(1+|k|^2)^\lambda} \, dk.
\end{align}
Note that $\rho$ is well defined on $\Pc_2(\R^d) \times \Pc_2(\R^d)$ for any $\lambda > \frac{d}{2}$. Recall that we take $\lambda=d+7$.  According to the Sobolev embedding theorem \cite[Corollary 9.13]{MR2759829},  given that $\lambda-\frac{d}{2}=\frac{d}{2}+7>1$, the Lipschitz constant and supnorm are bounded by the $\lambda$-Sobolev norm, i.e., 
$$Lip(f)+\lVert f \rVert_\infty\leq C \lVert f \rVert_{\lambda}.$$

{
\begin{rmk}\label{rmk:choiceoflambda}
For the purpose of the Sobolev embedding and the proof of the comparison principle Theorem~\ref{thm:comp}, it is sufficient to take $\lambda>d/2+2$. However, to verify the Lipschitz property of the value function in Section~\ref{ss.controlproblem}, we need larger $\lambda$ and please see \eqref{eq:lambdagood} for a detailed computation. 
\end{rmk}}

Moreover, for any $\mu \in \Pc_2(\R^d), f \in \mathbb{H}_{\lambda}$ and finite signed measure $\eta$ on $\R^d$, due to Fourier inversion formula, it can be easily seen that 
\begin{align*}
    \eta(f)=\eta(\mathcal{F}^{-1} \mathcal{F}f)&= \frac{1}{(2\pi)^d}\int  \, \eta(dy) \int e^{i y \cdot k} \mathcal{F}f(k) \, d k=  \frac{1}{(2\pi)^d}\int F_{-k}(\eta) \, \mathcal{F}f(k) \, dk \\
    &\leq  \frac{1}{(2\pi)^d}\lVert f \lVert_{\lambda} \sqrt{\int \frac{|F_k(\eta))|^2}{(1+|k|^2)^{\lambda}} \, dk }, 
\end{align*}
and $$\sup_{\lVert f \rVert_{\lambda} \leq 1} \eta(f) \leq \frac{1}{(2\pi)^d}\sqrt{\int \frac{|F_k(\eta)|^2}{(1+|k|^2)^{\lambda}} \, dk }.$$
This observation leads to the following lower bound on $\rho_F$. 
\begin{lemma}\label{lem:metricrho}
There is a constant $C$ such that for any $\mu,\nu \in \Pc_2(\R^d)$
\begin{align*}
  \lVert \mu-\nu \rVert_{-\lambda} := \sup_{ \lVert f \rVert_{\lambda} \leq 1} \int f \, d(\mu-\nu) \leq C \rho_F(\mu,\nu). 
\end{align*}
\end{lemma}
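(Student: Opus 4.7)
The plan is to use the dual characterization established in the displayed computation immediately preceding the lemma, namely
\[
\lVert \mu-\nu\rVert_{\lambda}=\frac{1}{(2\pi)^d}\sqrt{\int \frac{|F_k(\mu)-F_k(\nu)|^2}{(1+|k|^2)^{\lambda}}\,dk},
\]
applied to $\eta=\mu-\nu$. The task then reduces to bounding the right-hand side by $C\rho_F(\mu,\nu)$, so the only ingredient missing is a comparison between $F_k(\mu)-F_k(\nu)$ and the centered increments $F_k(\Sc_0(\mu))-F_k(\Sc_0(\nu))$ that appear in the definition of $\rho_F$.

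First I would record the factorization
\[
F_k(\mu)=e^{-ik\cdot m(\mu)}\,F_k(\Sc_0(\mu)),
\]
which follows by a change of variables under the push-forward $\Sc_0(\mu)=(I_d-m(\mu))_\sharp\mu$. Splitting the difference into two natural pieces,
\[
F_k(\mu)-F_k(\nu)=e^{-ik\cdot m(\mu)}\bigl(F_k(\Sc_0(\mu))-F_k(\Sc_0(\nu))\bigr)+\bigl(e^{-ik\cdot m(\mu)}-e^{-ik\cdot m(\nu)}\bigr)F_k(\Sc_0(\nu)),
\]
and using $|e^{ia}-e^{ib}|\leq |a-b|$ together with the trivial bound $|F_k(\Sc_0(\nu))|\leq (2\pi)^{-d/2}$ (since $\Sc_0(\nu)$ is a probability measure), I obtain
\[
|F_k(\mu)-F_k(\nu)|^2\leq 2|F_k(\Sc_0(\mu))-F_k(\Sc_0(\nu))|^2+C\,|k|^2\,|m(\mu)-m(\nu)|^2.
\]

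Dividing by $(1+|k|^2)^{\lambda}$ and integrating over $\R^d$, the first term yields the Fourier-Sobolev part of $\rho_F^2(\mu,\nu)$ directly, while for the second term I use the fact that $\lambda=d+7$ makes $\int \frac{|k|^2}{(1+|k|^2)^{\lambda}}\,dk$ finite (since $2\lambda-2>d$), so that this contribution is bounded by $C'|m(\mu)-m(\nu)|^2\leq C'\rho_F^2(\mu,\nu)$. Combining the two gives
\[
\int \frac{|F_k(\mu)-F_k(\nu)|^2}{(1+|k|^2)^{\lambda}}\,dk\leq C''\rho_F^2(\mu,\nu),
\]
and taking square roots together with the prefactor $(2\pi)^{-d}$ delivers the desired inequality; note that the $|V(\mu)-V(\nu)|^2$ term in $\rho_F^2$ is never used, so the bound is in fact slightly stronger.

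No step is really an obstacle, but the only subtlety is ensuring that the exponent $\lambda$ chosen in the paper is large enough for $\int |k|^2(1+|k|^2)^{-\lambda}dk$ to converge; this is the reason the paper fixed $\lambda=d+7$ (anything with $\lambda>d/2+1$ would suffice for this lemma, with the stronger constraint being required elsewhere for Sobolev embedding into $C^1$).
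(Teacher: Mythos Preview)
Your proof is correct. Both your argument and the paper's rest on the same decomposition into a ``mean--shift'' piece and a ``centered Fourier'' piece, but the executions differ slightly and it is worth recording the contrast.

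The paper introduces the translated measure $\tilde\mu=(I_d+m(\nu-\mu))_\sharp\mu$, splits $\int f\,d(\mu-\nu)=\int f\,d(\mu-\tilde\mu)+\int f\,d(\tilde\mu-\nu)$, and handles the first term in physical space via the Sobolev embedding $Lip(f)\leq C\lVert f\rVert_\lambda$; the second term is controlled by the dual identity together with $|F_k(\tilde\mu)-F_k(\nu)|=|F_k(\Sc_0(\mu))-F_k(\Sc_0(\nu))|$. You instead apply the dual identity immediately to $\mu-\nu$ and carry out the splitting entirely on the Fourier side via the factorization $F_k(\mu)=e^{-ik\cdot m(\mu)}F_k(\Sc_0(\mu))$, replacing the Sobolev embedding by the finiteness of $\int |k|^2(1+|k|^2)^{-\lambda}\,dk$. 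These two ingredients are of course Fourier-dual to one another (both require $\lambda>d/2+1$), so the approaches are equivalent in content; yours is marginally more self-contained in that it never leaves Fourier variables and does not invoke the embedding theorem. Your closing remark that the $|V(\mu)-V(\nu)|^2$ term is unused is also true of the paper's proof.
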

\begin{proof}
Taking $\tilde \mu= (I_d+m(\nu-\mu))_{\sharp}\mu$, for any $f \in \mathbb{H}_{\lambda}$ we have that
\begin{align*}
\int f \, d(\mu-\nu)=&\int f(x) \, (\mu-\tilde\mu)(dx) + \int f(x) \, (\tilde \mu-\nu)(dx) \\
=& \int f(x)-f(x+m(\nu-\mu)) \, \mu(dx) + \int f(x) \, (\tilde \mu-\nu)(dx) \\ 
\leq & |m( \mu )-m (\nu)| Lip(f) +  \frac{1}{(2\pi)^d}\lVert f \rVert_{\lambda}\sqrt{\int \frac{|F_{k}(\tilde \mu)-F_{k}(\nu)|^2}{(1+|k|^2)^{\lambda}} \, dk } \leq C \rho_F(\mu,\nu)
\end{align*}
where in the last inequality we use the fact that thanks to $m(\tilde \mu)=m(\nu)$ one has
$$
|F_k(\tilde \mu)-F_k(\nu)|^2=|F_k(\Sc_0(\mu))-F_k(\Sc_0(\nu))|^2.
$$
\end{proof}

Before stating the next lemma, we recall that the completeness of a metric space is a property of its metric and not its topology. 
\begin{lemma}\label{lem:equivalence}
     $(\Pc_2(\R^d), \rho_F)$ is a {\it non-complete}  metric space which is topologically equivalent to $(\Pc_2(\R^d), W_2)$ and $(\Pc_2(\R^d),W_2^{(\sigma)})$.
\end{lemma}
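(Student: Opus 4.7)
The plan is to prove three things in order: that $\rho_F$ is a metric on $\Pc_2(\R^d)$, that it is topologically equivalent to $W_2$ (whence equivalence with $W_2^{(\sigma)}$ follows from the equivalence $W_2\sim W_2^{(\sigma)}$ already noted in the paper), and that it admits a Cauchy sequence with no limit in $\Pc_2(\R^d)$.

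For the metric axioms, symmetry is immediate from \eqref{eq:defrhoF}, and the triangle inequality holds because $\rho_F(\mu,\nu)$ is the Euclidean norm of the triple $\bigl(m(\mu)-m(\nu),\, V(\mu)-V(\nu),\, F_\cdot(\Sc_0(\mu))-F_\cdot(\Sc_0(\nu))\bigr)$ in the Hilbert space $\R^d\times \mathcal{S}_d\times L^2\bigl(\R^d,(1+|k|^2)^{-\lambda}dk\bigr)$. Positive-definiteness reduces to: if $F_k(\Sc_0(\mu))=F_k(\Sc_0(\nu))$ for Lebesgue-a.e.\ $k$, then by continuity in $k$ the equality extends everywhere, so $\Sc_0(\mu)=\Sc_0(\nu)$ by uniqueness of characteristic functions; combined with $m(\mu)=m(\nu)$ this gives $\mu=\nu$.

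For topological equivalence, $W_2$-convergence forces convergence of $m$ and $V$ (second moments converge) and of each $F_k$ pointwise (as $f_k\in C_b(\R^d)$); the uniform bound $|F_k|\le (2\pi)^{-d/2}$ together with integrability of $(1+|k|^2)^{-\lambda}$ (valid since $\lambda=d+7>d/2$) closes the Fourier integral term via dominated convergence. Conversely, suppose $\rho_F(\mu_n,\mu)\to 0$. Then convergence of $m(\mu_n)$ and $V(\mu_n)$ yields uniformly bounded second moments and hence tightness; Lemma~\ref{lem:metricrho} gives $\mu_n(\varphi)\to \mu(\varphi)$ for every $\varphi\in\mathbb{H}_\lambda$, in particular for every $\varphi\in C_c^\infty(\R^d)$, which combined with tightness produces narrow convergence. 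Narrow convergence together with convergence of the second moment is equivalent to $W_2$-convergence.

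The main obstacle, and the most instructive step, is non-completeness. The crucial observation is that $\rho_F$ treats $V(\mu)$ and $F_\cdot(\Sc_0(\mu))$ as if they were independent pieces of data, even though $V$ is determined by $\Sc_0(\mu)$; one can therefore construct a sequence whose Fourier data and covariance converge to values that are incompatible with any single probability measure. I take $\mu_n=(1-1/n)\delta_0+(1/n)\delta_{\sqrt{n}\,e_1}$ for a unit vector $e_1\in\R^d$. Then $m(\mu_n)=e_1/\sqrt{n}\to 0$ and a direct computation gives $V(\mu_n)=(1-1/n)\,e_1 e_1^\top\to e_1 e_1^\top$, while the atom of $\Sc_0(\mu_n)$ at $\sqrt{n}\,e_1-e_1/\sqrt{n}$ carries mass $1/n$ and escapes to infinity, so $F_k(\Sc_0(\mu_n))\to (2\pi)^{-d/2}=F_k(\delta_0)$ pointwise in $k$ and hence in $L^2\bigl(\R^d,(1+|k|^2)^{-\lambda}dk\bigr)$ by dominated convergence. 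Consequently $(\mu_n)$ is $\rho_F$-Cauchy. Any candidate limit $\mu_\infty\in\Pc_2(\R^d)$ would have $F_k(\Sc_0(\mu_\infty))=(2\pi)^{-d/2}$ for a.e.\ $k$, forcing $\Sc_0(\mu_\infty)=\delta_0$ and hence $V(\mu_\infty)=0$, which contradicts $V(\mu_\infty)=e_1 e_1^\top$. Thus no $\rho_F$-limit exists in $\Pc_2(\R^d)$.
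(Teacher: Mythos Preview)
Your proof is correct. The topological-equivalence argument matches the paper's in spirit, and you additionally spell out the metric axioms, which the paper leaves implicit. For non-completeness both proofs exploit the same phenomenon---the covariance $V$ and the Fourier data of $\Sc_0$ can converge to values that no single measure realizes---but the mechanics differ. The paper takes the \emph{symmetric} sequence $\mu_n=(1-\tfrac{1}{n})\delta_0+\tfrac{1}{2n}\delta_{-\sqrt{n}e_1}+\tfrac{1}{2n}\delta_{\sqrt{n}e_1}$, which has \emph{constant} mean $0$ and covariance $e_1e_1^\top$, so only the Fourier piece varies; it then bounds that piece by $W_1$ via Sobolev embedding and reads off $\rho_F$-Cauchyness from $W_1(\mu_n,\delta_0)\to 0$. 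Your asymmetric example forces you to track all three components of $\rho_F$, but in return you argue Cauchyness directly in the Hilbert space $\R^d\times\Sc_d\times L^2((1+|k|^2)^{-\lambda}dk)$ without the $W_1$ detour, and your ``no limit'' step is self-contained (the paper tacitly uses the freshly proved equivalence with $W_2$ to identify $\delta_0$ as the only candidate limit before ruling it out).
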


\begin{proof}
Note that for $\mu_n,\mu\in \Pc_2(\R^d)$, the convergence $\rho_F(\mu_n,\mu)\to 0$
means the convergence of the first two moments of the measures and the pointwise convergence of characteristic functions. Thanks to \cite[Theorem 6.9]{villani2009optimal} and \cite[Theorem 6.3]{MR4226142}, this implies distributional convergence and also that $W_2(\mu_n,\mu)\to 0$. Together with \cite[Lemma 2]{auricchio2020equivalence}, we conclude that $\rho_F$ and $W_2$ are topologically equivalent. Due to
\cite[Lemma 4.3]{cosso2021master}, the topologies generated by $W_2$ and $W^{(\sigma)}_2$ are equal as well. 

The following example shows the lack of completeness of $\rho_F$ which also implies that the metrics are not metrically equivalent. Let us take $$\mu_n=\left(1-\frac{1}{n} \right)\delta_0+\frac{1}{2n}\delta_{-\sqrt{n}e_1}+\frac{1}{2n}\delta_{\sqrt{n}e_1}, \quad n \in \mathbb{N},$$ 
where $e_1=(1,0,0,\dotso,0) \in \R^d$. For each $n$, $\mu_n$ has mean $0$ and variance $e_1^\top e_1$. Due to Sobolev embedding, for any $n,l \geq 1$
\begin{align*}
    \rho_F(\mu_n,\mu_m)& = \sqrt{\int \frac{|F_{k}(\mu_n)-F_{k}(\mu_m)|^2}{(1+|k|^2)^{\lambda}} \, dk }= \sup_{\lVert f \rVert_{\lambda} \leq 1} \int f \, d(\mu_n-\mu_m)  \\
    & \leq C \sup_{ Lip(f) \leq 1}\int f \, d(\mu_n-\mu_m) \leq W_1(\mu_n,\mu_m).
\end{align*}
Since $W_1(\mu_n,\delta_0)\to 0$ as $n\to \infty$, $(\mu_n)_{n\geq 1}$ is a Cauchy sequence w.r.t. $W_1$ and also Cauchy w.r.t. $\rho_F$. However, $\rho_F(\mu_n,\delta_0) \geq 1$ due to the variance part, and hence there is no limit point w.r.t. $\rho_F$. Therefore $\rho_F$ is not complete. 

\end{proof}

We have introduced two functions on $(\Pc_2(\R^d))^2$. $\rho_F$ is a non-complete metric associated the Fourier-Wasserstein metrics in \cite{auricchio2020equivalence,carrillo2007contractive,toscani1999probability}. As observed in \cite{soner2022viscosity}, for first-order equations on compact domains, this function enjoys good differentiability and symmetry properties and will be used for the classical doubling of variables methods of viscosity solutions. Compared to \cite{soner2022viscosity}, the mean term $m$ in \eqref{eq:defrhoF} allows us to easily compute the partial Hessian of $\rho_F$. The presence of the $V$ term allows us to claim the topological equivalence of $\rho_F$ to $W_2$. However, this $V$ term is also the precise reason why completeness fails (owing to the unboundedness of the domain). Indeed, in the proof of Lemma \ref{lem:equivalence}, we are able to obtain the convergence $V(\mu_m)$. However, the limit of $V(\mu_m)$ is not $V(\delta_0)$ and the space $(\Pc_2(\R^d), \rho_F)$ lacks completeness. If the domain were compact, as noted by \cite{soner2022viscosity}, these metrics are equivalent and $(\Pc_2(\R^d),\rho_F)$ is complete.

$\rho_\sigma$ is a modification of the gauge-type function of \cite{cosso2021master} allowing us to use the Borwein-Preiss variational principle in \cite{MR2144010}. Note that the two metrics $\rho_F, W^{(\sigma)}_2$ are topologically equivalent to $W_2$. Although these metrics lead to different sets of uniformly continuous functions, they have the same set of continuous and semicontinuous functions.

\subsection{First-order derivatives}
For a function $f:\R^d\mapsto \R$ we define the quadratic weighted norm 
\begin{align}\label{eq:quad}
||f||_{\mathfrak{q}}:=\sup_{x\in \R^d}\frac{|f(x)|}{1+|x|^2}    
\end{align}
and denote by $B_{\mathfrak{q}}$ the set of Borel measurable functions $f$ with at most quadratic growth, i.e. $||f||_{\mathfrak{q}}<\infty$.

Following the definition of \cite{cdll2019}, a function $u : \Pc_2(\R^d)\mapsto \R$ is said to be linearly differentiable if 
there exists a continuous function\footnote{$(\Pc_2(\R^d), \rho_F)$ and $(\Pc_2(\R^d), W_2)$ being topologically equivalent, we do not need to specify the underlying metric on $\Pc_2(\R^d)$.}
$$\frac{\d u}{\d \mu}:\Pc_2(\R^d)\times \R^d\mapsto \R$$ 
so that  $$\left\|\frac{\d u}{\d \mu}(\mu,\cdot)\right\|_{\mathfrak{q}}<\infty$$
and for all $\mu,\mu'\in \Pc_2(\R^d)$
we have that
\begin{align*}
\lim_{h\to 0}\frac{u(\mu +h (\mu'-\mu))-u(\mu )}{h}=\int \frac{\d u}{\d \mu}(\mu ,x)d(\mu '-\mu)(x).
\end{align*}
In case that $x\mapsto \frac{\d u}{\d \mu}(\mu,x)$ is differentiable, we say $u$ is $C^1$ and define 
\begin{align*}
D_\mu u(\mu ,x)=D_x\frac{\d u}{\d \mu}(\mu ,x)\in \R^d.
\end{align*}
If $\left\|D_\mu u (\mu,\cdot)\right\|_{\mathfrak{q}}<\infty$, for notational simplicity we denote for all $\mu'\in \Pc_2(\R^d)$
\begin{align*}
D_\mu u(\mu ,[\mu'])&=\int D_\mu u(\mu ,x)d\mu'(x)\in \R^d.
\end{align*}
It is shown in \cite[Proposition 2.3]{cdll2019} that $D_\mu u$ can be understood as a derivative $u$ along push-forward directions, meaning for all Borel measurable bounded vector field $\phi:\R^d\mapsto \R^d$,
we have
$$\lim_{h\to 0}\frac{u((I_d+h \phi)_\sharp \mu)-u(\mu )}{h}=\int\phi^\top(x) D_\mu u(\mu ,x) \,d\mu(x).$$
In particular for $\phi(x)=v\in \R^d$, we have that 
$$\lim_{h\to 0}\frac{u((I_d+h v)_\sharp \mu)-u(\mu )}{h}=v^\top D_\mu u(\mu ,[\mu]).$$

\subsection{Second-order derivatives}
\subsubsection{Full $C^2$-regularity}
Similarly to \cite{cdll2019,chow2019partial}, if $D_\mu u(\mu ,x)$ is sufficiently smooth, we also define $D_{x \mu} u(\mu ,x)=D_xD_\mu u(\mu ,x)=D^2_{xx}\frac{\d u}{\d \mu}(\mu ,x)\in \Sc^d$ and $D^2_{\mu\mu}u(\mu ,x,z)=D_\mu(D_\mu u(\mu ,x))(z)$. If $\left\|D_{x \mu} u (\mu,\cdot)\right\|_{\mathfrak{q}}<\infty$ and \begin{align}\label{eq:quad2}
    \left\|D^2_{\mu\mu}u(\mu ,\cdot)\right\|_{\mathfrak{q}}:=\sup\frac{|D^2_{\mu\mu}u(\mu ,x,z)|}{1+|x|^2+|z|^2}<\infty,
\end{align} we define the partial Hessian
\begin{align}
\Hc u(\mu )= D^2_{\mu\mu}u(\mu ,[\mu],[\mu])+D_{x\mu} u(\mu ,[\mu])\in \Sc_d\label{eq:defhess}.
\end{align}
For a function $U:\R\times \Pc_2(\R)\mapsto \R$ we also define the cross derivative 
$$ D^2_{y\mu}U(y,\mu,x):= D_y D_{\mu}U(y,\mu,x)$$
and the partial cross derivative
\begin{align}\label{eq:defcrossder}
    \Dc^2_{y\mu}U(y,\mu):=\int D^2_{y\mu}U(y,\mu,x)\,\mu(dx)=D^2_{y\mu}U(y,\mu,[\mu])\in \R^{d\times d}.
\end{align}

For fixed terminal time $T>0$, we denote $\Theta:= [0,T]\times \R^d\times \Pc_2 (\R^d)$, $\mathring{ \Theta}:= [0,T)\times \R^d\times \Pc_2 (\R^d)$, and $\theta=(t,y,\mu)\in \Theta$ the generic element of $\Theta$. 
We define the following metric and gauge-type function on $\Theta$ by
\begin{align}
    d^2_F(\theta,\tilde \theta):=d^2_F((t,y,\mu),(\tilde t,\tilde y,\tilde \mu)):=|t-\tilde t|^2+|y-\tilde y|^2+\rho_F^2(\mu,\tilde \mu)\label{eq:defdF}\\
    d^2_\sigma(\theta,\tilde \theta):=d^2_\sigma((t,y,\mu),(\tilde t,\tilde y,\tilde \mu)):=|t-\tilde t|^2+|y-\tilde y|^2+\rho^2_\sigma(\mu,\tilde \mu)\label{eq:defdsigma}
\end{align}
for $(\theta,\tilde \theta)=((t,y,\mu),(\tilde t,\tilde y,\tilde \mu))\in \Theta^2$.

Our goal is to provide assumptions on the function 
$$h:\Theta \times \R\times \R^d\times \Sc_d\times B^d_{\mathfrak{q}}\times B^{d\times d}_{\mathfrak{q}}\times \R^{d\times d}\times \Sc^d\to \R $$ 
to have comparison of Lipschitz continuous viscosity solutions of
\begin{align}\label{eq:pde}
        -\pa_t U(\theta)-h(\theta,U(\theta),D_y U(\theta),D_{yy}^2 U(\theta),D_\mu U(\theta),D_{x\mu}U(\theta),\Dc^2_{y\mu}U(\theta) ,\Hc U(\theta))=0,
\end{align}
for $\theta=(t,y,\mu)\in \mathring{\Theta}$.
In equation \eqref{eq:pde}, the subscript $y$ is the derivative with respect to the state variable $y$, 
$(D_\mu U(\theta),D_{x\mu}U(\theta) )=(D_\mu U(\theta,x),D_{x\mu}U(\theta,x) )_{x\in \R^d}$ are functions of the dummy variable $x$, and 
$(D_y U(\theta),D_{yy}^2 U(\theta),\Dc^2_{y\mu}U(\theta),\Hc U(\theta))\in \R^d\times \Sc_d\times \R^{d\times d}\times \Sc_d$
are finite dimensional. 

We now define the concept of full $C^2$-regular functions and full $C^2$-regular solutions of \eqref{eq:pde}.
\begin{definition}
    We say that a function $U:\Theta\mapsto \R$ is full $C^2$-regular if 
    $$(\pa_t U,D_y U,D_{yy}^2 U,D_\mu U,D_{x\mu} U,D^2_{y\mu} U,D^2_{\mu\mu} U)\in \R\times \R^d\times \Sc_d\times B^d_{\mathfrak{q}}\times B^{d\times d}_{\mathfrak{q}}\times B^{d\times d}_{\mathfrak{q}}$$ is continuous in $(t,x,z,y,\mu)$. 
    
    $U$ is a full $C^2$-regular solution (resp. subsolution, supersolution) of \eqref{eq:pde}, if it is a full $C^2$-regular function and 
    \begin{align*}
        &-\pa_t U(\theta)-h(\theta,U(\theta),D_y U(\theta),D_{yy}^2 U(\theta),D_\mu U(\theta),D_{x\mu}U(\theta),\Dc^2_{y\mu}U(\theta) ,\Hc U(\theta))\\
        &=0 \, (\mbox{resp. }\leq 0,\, \geq 0 ),
\end{align*}
for all $\theta\in \mathring{\Theta}$ where $\Dc^2_{y\mu}U(\theta)$ is defined by \eqref{eq:defcrossder} and $\Hc U(\theta)$ is defined by \eqref{eq:defhess}.
\end{definition}

\subsubsection{Partial $C^2$-regularity}
In \eqref{eq:pde}, we do not need $D^2_{\mu\mu}U(\theta ,x,z),D_y D_{\mu}U(\theta,x)$ for all $(x,z)$ but an integral of these functions. Inspired by \cite{chow2019partial,gangbo2021finite}, we aim to define the concept of partial $C^2$-regularity which is luckily the regularity of the test functions we are able to construct with the doubling of variable methodology in our proof of comparison of viscosity solutions. 
Our definition of partial regularity is motivated by the following lemma, which gives an intrinsic definition of $\Hc U,\Dc^2_{y\mu}U$ without relying on the existence of $D^2_{\mu\mu}U(\theta ,x,z)$ and $D_y D_{\mu}U(\theta,x)$.
\begin{lemma}\label{lem:hespartial}
    Let $U$ be a full $C^2$-regular function. 
    Then 
    \begin{align*}
       \frac{\pa^2}{\pa z^2}U(t,y,(I_d+z)_\sharp\mu)|_{z=0}&= \Hc U(t,y,\mu)\\
       \frac{\pa^2}{\pa z\pa y}U(t,y,(I_d+z)_\sharp\mu)|_{z=0}&= \Dc^2_{y\mu} U(t,y,\mu).
    \end{align*}
\end{lemma}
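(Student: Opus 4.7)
The plan is to set $g(z) := U(t,y,(I_d+z)_\sharp \mu)$ for $z\in\R^d$ and compute the Hessian $\nabla^2_z g(0)$ and the mixed partial $\nabla_y\nabla_z g(0)$ by two successive applications of the push-forward derivative formula recalled just before Section 2.3 (from \cite[Proposition 2.3]{cdll2019}). Writing $\mu_z := (I_d+z)_\sharp \mu$, the composition identity $\mu_{z+he_j} = (I_d+he_j)_\sharp \mu_z$ reduces each $z_j$-derivative of $g$ at $z$ to a push-forward derivative at $\mu_z$ along the constant vector field $e_j$, yielding
\begin{align*}
\partial_{z_j} g(z) = \bigl(D_\mu U(t,y,\mu_z,[\mu_z])\bigr)_j = \int \bigl(D_\mu U(t,y,\mu_z, x+z)\bigr)_j \, \mu(dx).
\end{align*}

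For the Hessian I would differentiate this expression in $z_k$ under the integral, noting that the integrand depends on $z$ both through the shifted spatial argument $x+z$ and through the measure $\mu_z$. The spatial contribution is $(D_{x\mu}U(t,y,\mu_z, x+z))_{jk}$ by the usual chain rule; the measure contribution comes from a second application of the push-forward derivative formula to the scalar functional $\nu\mapsto (D_\mu U)_j(t,y,\nu, x+z)$ and equals $\int (D^2_{\mu\mu}U(t,y,\mu_z, x+z, w))_{jk}\, \mu_z(dw)$. Evaluating at $z=0$ and integrating against $\mu(dx)$ then reproduces exactly the two summands in the definition \eqref{eq:defhess} of $\Hc U(t,y,\mu)$.

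The cross derivative identity is the same argument stopped one step earlier: since $\partial_{z_k} g(0) = \int (D_\mu U(t,y,\mu, x))_k\, \mu(dx)$, commuting $\partial_{y_l}$ with the integral gives
\begin{align*}
\partial_{y_l}\partial_{z_k}g(0) = \int (D_y D_\mu U(t,y,\mu,x))_{lk}\, \mu(dx) = (\Dc^2_{y\mu}U(t,y,\mu))_{lk},
\end{align*}
by the very definition \eqref{eq:defcrossder}.

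The only genuinely technical point is the justification of the two successive interchanges of derivative and integral. The space-derivative contribution is controlled by the quadratic growth bound $\|D_{x\mu}U(t,y,\mu_z,\cdot)\|_{\mathfrak q}<\infty$ together with the uniform estimate $\sup_{|z|\le 1}\int(1+|x+z|^2)\,\mu(dx)<\infty$. The measure-derivative contribution needs a local modulus of continuity for $\nu\mapsto D_\mu U(t,y,\nu,\cdot)$ along push-forward directions, which is supplied by the quadratic growth bound \eqref{eq:quad2} on $D^2_{\mu\mu}U$ combined with the continuity hypotheses built into the definition of full $C^2$-regularity. With these estimates in hand, dominated convergence legitimizes both interchanges and the two identities follow.
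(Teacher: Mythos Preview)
Your argument is correct. The two successive applications of the push-forward derivative formula, together with the change of variables $\int f(x)\,\mu_z(dx)=\int f(x+z)\,\mu(dx)$, produce exactly the two summands in \eqref{eq:defhess}, and the cross derivative follows immediately. The dominated-convergence justification you sketch is adequate: for the spatial contribution the bound $|D_{x\mu}U(t,y,\mu_z,x+z)|\le \|D_{x\mu}U(t,y,\mu_z,\cdot)\|_{\mathfrak q}(1+|x+z|^2)$ is locally uniform in $z$ and integrable against $\mu$; for the measure contribution, the mean-value inequality along the path $s\mapsto \mu_{z+she_k}$ bounds the difference quotient of $(D_\mu U)_j$ by $\sup_{s\in[0,1]}\int |D^2_{\mu\mu}U(t,y,\mu_{z+she_k},x+z,w)|\,\mu_{z+she_k}(dw)$, which is again controlled by \eqref{eq:quad2} and the second moments of $\mu$.

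The paper itself does not carry out this computation: its proof of the lemma consists of the single sentence ``The result is a consequence of \cite[Proposition 3.4]{chow2019partial}.'' Your direct calculation is essentially what that cited proposition contains (indeed \cite{chow2019partial} proves a more general chain rule along flows $(I_d+\phi_s)_\sharp\mu$), so the two approaches are not really different in substance---you have simply unpacked the reference. The advantage of your write-up is self-containment; the paper's citation is shorter and points the reader to the general framework in which such identities live.
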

\begin{proof}
    The result is a consequence of \cite[Proposition 3.4]{chow2019partial}.
\end{proof}
Now we define the concept of partial $C^2$-regularity and the solution property of partially $C^2$-regular functions. 
\begin{definition}
    We say that a function $U:\Theta\mapsto \R$ is partially $C^2$-regular if the function $(t,y,z,\mu )\in \R^d\times\Theta\mapsto U(t,y,(I_d+z)_\sharp\mu)$ is $C^{1,2,2}$ in $(t,y,z)$ and continuous in all its variables, and $(D_\mu U,D_{x\mu} U)\in B^d_{\mathfrak{q}}\times B^{d\times d}_{\mathfrak{q}}\times B^{d\times d}_{\mathfrak{q}}$ exist and they are continuous in $(t,x,y,\mu)$. 

    For a partially $C^2$-regular function, we define $\Hc U$ and $\Dc^2_{y\mu} $ as 
    \begin{align*}
        \Hc U(\theta)&:= \frac{\pa^2}{\pa z^2}U(t,y,(I_d+z)_\sharp\mu)|_{z=0}, \\
        \Dc^2_{y\mu} U(\theta)&:= \frac{\pa^2}{\pa z\pa y}U(t,y,(I_d+z)_\sharp\mu)|_{z=0}. 
    \end{align*}
    
    We say that a partially $C^2$-regular function $U:\Theta\mapsto \R$ is a partially $C^2$-regular solution (resp. subsolution, supersolution) of \eqref{eq:pde}, if 
    $$(\pa_t U,D_y U,D_{yy}^2 U,D_\mu U,D_{x\mu}U,\Dc^2_{y\mu}U ,\Hc U) \in \R\times \R^d\times \Sc_d\times B^d_{\mathfrak{q}}\times B^{d\times d}_{\mathfrak{q}}\times \R^{d\times d}\times \Sc_d$$
    is continuous in all of its variables and 
    \begin{align*}
        &-\pa_t U(\theta)-h(\theta,U(\theta),D_y U(\theta),D_{yy}^2 U(\theta),D_\mu U(\theta),D_{x\mu}U(\theta),\Dc^2_{y \mu}U(\theta) ,\Hc U(\theta))\\
        &=0\,(\mbox{resp. }\leq 0,\, \geq 0 ),
\end{align*}
for all $\theta\in \mathring{\Theta}.$
\end{definition}
\begin{rmk}
\begin{itemize}
    \item[(i)] Thanks to \eqref{eq:defdF}, for fixed $\tilde \mu\in \Pc_2(\R^d)$, the mapping
    $$\mu\in \Pc_2(\R^d)\mapsto  d^2_F(\mu,\tilde \mu)$$ is partially $C^2$-regular with 
    $$ \Hc d^2_F(\mu,\tilde \mu)=2I_d\mbox{ and }\Dc^2_{y\mu}d^2_F((t,y,\mu),\tilde \theta)=0.$$
    
    \item[(ii)] According to \cite[Lemma 2]{auricchio2020equivalence}, $W_2(\mu,\nu)^2=W_2(\mathcal{S}_0(\mu), \mathcal{S}_0(\nu))^2 +|m(\mu)-m(\nu)|^2 $, and hence $\mu\in \Pc_2(\R^d)\mapsto  W_2(\mu,\nu)^2$ admits a partial Hessian $\Hc$. However, it does not admit a first-order derivative at some points of $ \Pc_2(\R^d)$. As such, it is not partially $C^2$-regular.
    
    \item[(iii)] Given that we have relaxed the definition of smoothness, a natural question is what class of stochastic dynamics on the Wasserstein space is, for which an Ito formula still holds. We provide an answer to this question in Section \ref{ss.controlproblem}.
\end{itemize}
\end{rmk}

\section{Notion of Viscosity solutions and our main results}
The objective of this section is to use the fact that \eqref{eq:pde} only requires the partial Hessian and partial cross derivative in second-order terms to define the (partial) second-order jets. This leads to a novel definition of the viscosity property and an Ishii lemma. As an application of this result, we provide the promised comparison result.
\subsection{Second-order jets}
\begin{definition}
    
    Let $U:\Theta \mapsto \R$ be a locally bounded function and $\theta \in \mathring{\Theta}$. We define the (partial) second-order superjet $J^{2,+} U(\theta)\subset  \R\times \R^d\times \Sc_d\times B^d_{\mathfrak{q}}\times B^{d\times d}_{\mathfrak{q}}\times \R^{d\times d}\times \Sc_d$ of $U$ at $\theta$ by
    \begin{align*}
        J^{2,+} U(\theta):=\Big\{&(\pa_t \phi(\theta),D_y \phi(\theta),D_{yy}^2 \phi(\theta),D_\mu \phi(\theta),D_{x\mu}\phi(\theta),\Dc_{y\mu}^2\phi(\theta) ,\Hc \phi(\theta)):\\
        &U-\phi\mbox { has a local maximum at }\theta,\, \phi \mbox{ is partially }C^2\mbox{-regular}\Big\}\notag
    \end{align*}
    and subjet $J^{2,-} U(\theta)=-J^{2,+}(- U)(\theta)$.
    Recalling that the topology of sets $B^d_{\mathfrak{q}}$ and $B^{d \times d}_{\mathfrak{q}}$ are induced by \eqref{eq:quad} and \eqref{eq:quad2} respectively, we define the closure of the jets as
    \begin{align*}
        \notag\bar J^{2,+} U(\theta)=&\Big\{(b,p,X^{11},f,g,X^{12},X^{22})\in \R\times \R^d\times \Sc_d\times B^d_{\mathfrak{q}}\times B^{d\times d}_{\mathfrak{q}}\times \R^{d\times d}\times \Sc_d:\\
        \notag&U-\phi^n\mbox { has a local maximum at }\theta^n,\, \phi^n \mbox{ is partially }C^2\mbox{-regular}\\
        &(\pa_t \phi^n(\theta^n),D_y \phi^n(\theta^n),D_{yy}^2 \phi^n(\theta^n),D_\mu \phi^n(\theta^n),D_{x\mu}\phi^n(\theta^n),\Dc^2_{y\mu}\phi^n(\theta^n) ,\Hc \phi^n(\theta^n))\notag\\
        &\to(b,p,X^{11},f,g,X^{12},X^{22}),\,\theta^n\to \theta,\,U(\theta^n)\to U(\theta) \Big\},
    \end{align*}
    and also $\bar J^{2,-} U(\theta)=-\bar J^{2,+} (-U)(\theta)$.
\end{definition}

\begin{definition}
    We say that a locally bounded function $U:\Theta\mapsto \R$ is a viscosity subsolution (resp. supersolution) to \eqref{eq:pde} if for all $\theta\in \mathring{\Theta}$ and $(b,p,X^{11},f,g,X^{12},X^{22})\in  J^{2,+} U(\theta)$ (resp. $\in J^{2,-} U(\theta)$), one has the inequality
    \begin{align}\label{eq:viscodef}
        &-b-h(\theta,U(\theta),p,X^{11},f,g,X^{12} ,X^{22})\leq 0\,(\mbox{resp.} \geq 0 ).
\end{align}
\end{definition}
\begin{rmk}
    In our definition of viscosity solution, the test functions are allowed to have partial regularity rather than full regularity as it is the case in \cite[Section 5]{gangbo2021finite}. Thus, compared to a definition with full regularity, with our definition one needs to check the inequality \eqref{eq:viscodef} with more test functions $\phi$ as the jets are larger. As noted in \cite[Remark 3.12]{ekren2016viscosity}, our more stringent definition of viscosity property (with a larger set of test functions) is expected to simplify the proof of the comparison of viscosity solutions. 
\end{rmk}

\subsection{Second order jets on the Wasserstein space}
The main technical contribution of the paper is the following version of Ishii's Lemma, whose proof is given in Section \ref{s.proofishii}. The result can be seen as an infinite-dimensional extension of \cite[Theorem 3.2 and Theorem 8.3]{usersguide} or \cite[Proposition II.3]{ishii1990viscosity}. Unlike \cite{fabbri2017stochastic, lions1989viscosity3}, we cannot rely on any underlying Hilbert space structure. The finite dimensionality of the partial Hessian $\Hc u(\theta)$ is crucial in the proof of the Theorem. 
\begin{thm}\label{ishii}
    Assume that $u$, $-v:\Theta\mapsto \R$ are upper semicontinuous and that the function
    \begin{align}\label{cond:unifc0}
        (t,y,z)\in [0,T]\times (\R^d)^2\mapsto (u(t,y,(I_d+z)_\sharp \mu),v(t,y,(I_d+z)_\sharp \mu))
    \end{align}
    is continuous for fixed $\mu\in \Pc_2(\R^d)$.
    Let $\a>0$ and $((t^*,y^*,\mu^*),(\tilde t^*,\tilde y^*,\tilde \mu^*))=(\theta^*,\tilde \theta^*)\in \mathring{\Theta}^2$ be a strict global maximum of 
    $$(\theta,\tilde \theta)\in \Theta^2\mapsto u(\theta)-v(\tilde \theta)-\frac{\a}{2} d_F^2(\theta,\tilde \theta).$$
    Denote
    \begin{align}
        \Lc(\mu,\eta,\nu):=&2\int \frac{Re(F_k(\Sc_0(\mu))(F_k(\Sc_0(\eta))-F_k(\Sc_0(\nu)))^*)}{(1+|k|^2)^\lambda}dk\\
        &+2Tr(V^\top(\Sc_0(\mu))(V(\Sc_0(\eta))-V(\Sc_0(\nu)))),\notag\\
        \Psi(\mu):=&2\rho^2_F(\Sc_0(\mu),\Sc_0(\mu^*))+\Lc(\Sc_0(\mu),\Sc_0(\mu^*),\Sc_0(\tilde \mu^*))\label{eq:defpsi},\\
        \tilde \Psi(\mu):=& 2\rho^2_F(\Sc_0(\mu),\Sc_0(\tilde \mu^*))-\Lc(\Sc_0(\mu),\Sc_0(\mu^*),\Sc_0(\tilde \mu^*)).\notag
    \end{align}

    Then for any $\e>0$, there exist $X=\begin{pmatrix}
    X^{11}&X^{12}\\X^{21}&X^{22}
\end{pmatrix}
,\tilde X=\begin{pmatrix}
    \tilde X^{11}&\tilde X^{12}\\\tilde X^{21}&\tilde X^{22}
\end{pmatrix}
\in \Sc_{2d}$ so that 
\begin{align*}
    &\left(\a(t^*-\tilde t^*),\a(y^*-\tilde y^*),X^{11},\a(m(\mu^*)-m({\tilde \mu}^*))+\a\frac{D_\mu\Psi(\mu^*)}{2},\a\frac{D_{x \mu}\Psi(\mu^*)}{2},X^{12},X^{22}\right)\\
    &\in \bar J^{2,+} u(\theta^*),\\
    &\left(\a(t^*-\tilde t^*),\a (y^*-\tilde y^*),-\tilde X^{11},\a (m(\mu^*)-m( {\tilde \mu}^*))-\a\frac{D_\mu\tilde \Psi(\tilde \mu^*)}{2},-\a\frac{D_{x\mu}\tilde \Psi(\tilde \mu^*)}{2},-\tilde X^{12} ,-\tilde X^{22}\right)\\
    &\in \bar J^{2,-} v(\theta^*),\mbox{ and}\\
    &-\left(\frac{1}{\e}+2\a\right)I_{4d}\leq 
\begin{pmatrix}
    X&0\\0&\tilde X
\end{pmatrix}
\leq 
(\a+2\e\a^2)
\begin{pmatrix}
    I_{2d}&-I_{2d}\\-I_{2d}&I_{2d}
\end{pmatrix}.
\end{align*}
\end{thm}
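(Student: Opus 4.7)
The strategy is to exploit the slice structure of the partial Hessian. Observe that $\Hc u(\theta)=\frac{\pa^2}{\pa z^2}u(t,y,(I_d+z)_\sharp\mu)\big|_{z=0}$ only sees variations of $\mu$ along translations $\mu\mapsto(I_d+z)_\sharp\mu$, a $d$-dimensional slice of $\Pc_2(\R^d)$. Since both $\Sc_0$ and $V$ are translation invariant, the Fourier-Wasserstein metric simplifies dramatically on this slice:
\begin{align*}
\rho_F^2((I_d+z)_\sharp\mu^*,(I_d+\tilde z)_\sharp\tilde\mu^*)=|m(\mu^*)+z-m(\tilde\mu^*)-\tilde z|^2+C,
\end{align*}
with $C$ constant in $(z,\tilde z)$. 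This reduces the infinite-dimensional doubled-variable problem to a finite-dimensional one, to which the classical parabolic Ishii lemma directly applies.

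Define $\tilde u(t,y,z):=u(t,y,(I_d+z)_\sharp\mu^*)$ and $\tilde v(\tilde t,\tilde y,\tilde z):=v(\tilde t,\tilde y,(I_d+\tilde z)_\sharp\tilde\mu^*)$, which are continuous on $[0,T]\times\R^{2d}$ by \eqref{cond:unifc0}. The above simplification ensures that $(t^*,y^*,0,\tilde t^*,\tilde y^*,0)$ is a strict global maximum of
\begin{align*}
\Phi(t,y,z,\tilde t,\tilde y,\tilde z):=\tilde u(t,y,z)-\tilde v(\tilde t,\tilde y,\tilde z)-\frac{\a}{2}\bigl[|t-\tilde t|^2+|y-\tilde y|^2+|m(\mu^*)+z-m(\tilde\mu^*)-\tilde z|^2\bigr].
\end{align*}
Apply \cite[Theorem 8.3]{usersguide} to $\Phi$ with spatial variables $(y,z)\in\R^{2d}$. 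The Hessian of the quadratic perturbation is $A=\a\begin{pmatrix}I_{2d}&-I_{2d}\\-I_{2d}&I_{2d}\end{pmatrix}$, and the identity $A^2=2\a A$ produces $A+\e A^2=\a(1+2\e\a)\begin{pmatrix}I_{2d}&-I_{2d}\\-I_{2d}&I_{2d}\end{pmatrix}$, matching exactly the right-hand side of the claimed matrix inequality (and the spectrum of $A$ gives $\|A\|=2\a$, yielding the lower bound $-(\tfrac1\e+2\a)I_{4d}$). The lemma produces $X,\tilde X\in\Sc_{2d}$ satisfying this inequality, together with $C^{1,2}$ finite-dimensional test functions $\tilde\varphi_n$ and points $(t_n,y_n,z_n)\to(t^*,y^*,0)$ such that $\tilde u-\tilde\varphi_n$ has a local maximum at $(t_n,y_n,z_n)$ and the derivatives of $\tilde\varphi_n$ converge to $\bigl(\a(t^*-\tilde t^*),(\a(y^*-\tilde y^*),\a(m(\mu^*)-m(\tilde\mu^*))),X\bigr)$.

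Lift the finite-dimensional jets to Wasserstein jets by introducing the partial $C^2$-regular test function
\begin{align*}
\phi_n(t,y,\mu):=\tilde\varphi_n(t,y,m(\mu)-m(\mu^*))+\frac{\a}{2}\Psi(\mu).
\end{align*}
Using $D_\mu m(\mu)(x)=I_d$ (constant in $x$) and the translation invariance $\Psi((I_d+z)_\sharp\mu^*)=\Psi(\mu^*)$ (from the translation-invariance of $\Sc_0$), at $\theta_n:=(t_n,y_n,(I_d+z_n)_\sharp\mu^*)$ one computes $D_{yy}^2\phi_n=D_{yy}^2\tilde\varphi_n\to X^{11}$, $\Hc\phi_n(\theta_n)=D^2_{zz}\tilde\varphi_n\to X^{22}$, $\Dc^2_{y\mu}\phi_n(\theta_n)=D^2_{yz}\tilde\varphi_n\to X^{12}$, and for the measure derivatives
\begin{align*}
D_\mu\phi_n(\theta_n)(x)\to\a(m(\mu^*)-m(\tilde\mu^*))+\frac{\a}{2}D_\mu\Psi(\mu^*)(x),\quad D_{x\mu}\phi_n(\theta_n)(x)\to\frac{\a}{2}D_{x\mu}\Psi(\mu^*)(x).
\end{align*}
These match the claimed jet components; the specific form of $\Psi$ is engineered so that $\frac{\a}{2}D_\mu\Psi(\mu^*)(x)=\frac{\a}{2}D_\mu\rho_F^2(\cdot,\tilde\mu^*)|_{\mu^*}(x)-\a(m(\mu^*)-m(\tilde\mu^*))$, i.e.\ it captures precisely the $\Sc_0$-contribution to the derivative of $\rho_F^2$. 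The subjet element of $v$ at $\tilde\theta^*$ is obtained by the symmetric construction replacing $\Psi$ by $\tilde\Psi$.

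The main technical obstacle is verifying that $u-\phi_n$ attains a local maximum at $\theta_n$. On the slice $\mu=(I_d+z)_\sharp\mu^*$ this reduces directly to the finite-dimensional local max of $\tilde u-\tilde\varphi_n$, since $\Psi$ is slice-constant. The off-slice verification is more delicate: naively combining the strict-max bound $u(\theta)\le u(\theta^*)+\frac{\a}{2}[d_F^2(\theta,\tilde\theta^*)-d_F^2(\theta^*,\tilde\theta^*)]$ with the finite-dim local max is insufficient because the strict-max bound is not tight at $\theta_n\neq\theta^*$ and the Ishii matrix $X$ may be indefinite relative to $\a I$. The resolution rests on the key algebraic identity
\begin{align*}
\Psi(\mu)=\rho_F^2(\Sc_0(\mu),\Sc_0(\mu^*))+\rho_F^2(\Sc_0(\mu),\Sc_0(\tilde\mu^*))+\mathrm{const},
\end{align*}
derivable from $|c-b|^2-|c-a|^2=2\,\mathrm{Re}(c(a-b)^*)+|b|^2-|a|^2$ applied coefficientwise to the Fourier terms and its Frobenius analogue for the variance, combined with the Gaussian-regularized smooth variation principle based on $\rho_\sigma$ (as in \cite{cosso2021master,MR4607651}) used to perturb to a nearby strict maximum for which the off-slice curvature is controlled by the added smooth term. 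Letting $\sigma\to 0$ and extracting a convergent subsequence then delivers the desired jets in the closures $\bar J^{2,+}u(\theta^*)$ and $\bar J^{2,-}v(\tilde\theta^*)$ together with the matrix inequality.
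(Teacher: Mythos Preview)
Your overall strategy aligns with the paper's: exploit the translation-slice structure of $\Hc$, reduce to a finite-dimensional Ishii lemma, and lift back via the Borwein--Preiss variational principle. The algebraic identity you state for $\Psi$ is correct and is exactly what the paper exploits (phrased as inequalities \eqref{ineq:1}--\eqref{ineq:2}). However, your execution of the lift step has a genuine gap.

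You restrict to the \emph{single} slice $\{(I_d+z)_\sharp\mu^*:z\in\R^d\}$ and apply Ishii to $\tilde u(t,y,z)=u(t,y,(I_d+z)_\sharp\mu^*)$. The resulting test functions $\tilde\varphi_n$ touch $\tilde u$ from above only along this slice, so your lifted $\phi_n(\theta)=\tilde\varphi_n(t,y,m(\mu)-m(\mu^*))+\tfrac{\a}{2}\Psi(\mu)$ need not dominate $u$ at off-slice measures near $\theta_n$: for $\mu$ with $m(\mu)=m(\mu^*)+z_n$ but $\Sc_0(\mu)\neq\Sc_0(\mu^*)$ you have no control on $u(t_n,y_n,\mu)$ beyond upper semicontinuity. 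Your final paragraph hand-waves this away. Invoking the variational principle produces a strict maximum of a perturbed $u-\phi_n-\text{(gauge term)}$, but you give no argument why that maximum lies near $\theta_n$ rather than at some unrelated off-slice point; the only global information available is the original strict max at $\theta^*$, not at $\theta_n$, and the gauge-type term $\rho_\sigma$ has merely bounded derivatives and does not penalize the $\Sc_0$-direction strongly enough to force the max back onto the slice. (Also, $\sigma$ is fixed throughout; one does not send $\sigma\to0$.)

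The paper's key additional idea, which you are missing, is to take the supremum over \emph{all} centered measures before applying Ishii. It sets $u_1:=u-\tfrac{\a}{2}\Psi$, $v_1:=v+\tfrac{\a}{2}\tilde\Psi$, and uses your identity (as an inequality) to show that $u_1(\theta)-v_1(\tilde\theta)-\tfrac{\a}{2}\bigl(|t-\tilde t|^2+|y-\tilde y|^2+|m(\mu)-m(\tilde\mu)|^2\bigr)$ has the \emph{same} strict global max at $(\theta^*,\tilde\theta^*)$. It then defines
\[
u_2(t,y,z):=\sup_{\mu_0\in\Pc_{2,0}(\R^d)}u_1(t,y,(I_d+z)_\sharp\mu_0),
\]
and similarly $v_2$ via an infimum. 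Because $u_2$ already encodes the off-slice behavior of $u_1$, the finite-dimensional Ishii test function for $(u_2^*,v_{2,*})$ automatically dominates $u_1$ everywhere, not just on one slice. The variational principle is then applied to a functional $G_n$ built from $u_1,v_1$ and the finite-dimensional Ishii data, together with an explicit modulus-of-continuity estimate $C_n=\omega_R(\cdots)\to0$ showing that the slice value $u_1(\theta^{0,n})$ is $C_n$-close to the sup $u_2^*(t^*_n,y^*_n,z^*_n)$; this is precisely what pins the variational max near the slice point and allows all derivatives to converge to the claimed jet. Your slice-only reduction skips the sup step, and without it there is no mechanism tying the perturbed maximum to $\theta_n$.
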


\subsection{Comparison of viscosity solutions} 
Before stating our main theorem, we list the necessary assumptions. 
\begin{assumption}\label{assumeh}
    \begin{itemize}
    \item[(i)] Assume that the Hamiltonian $h$ is continuous in all variables and satisfies the following Lipschitz continuity assumption 
\begin{align}
    &|h(\theta,u,p,X^{11} ,f,g,X^{12} ,X^{22})-h( \theta,\tilde u,\tilde p, \tilde X^{11},\tilde f ,\tilde g, X^{12} , \tilde X^{22})| \label{eq:unifh1}\\
    &\notag\leq L_h|u-\tilde u|+L_h \left(1+|y|+\int |x|^2 \mu(dx)\right)  \\
    &\quad \times\left(||f-\tilde f||_{\mathfrak{q}}+||g-\tilde g||_{\mathfrak{q}}+|p-\tilde p|+|X^{11}-\tilde X^{11}|+|X^{22}-\tilde X^{22}|\right)\notag
\end{align}
for some constant $L_h$.
\item[(ii)] For any $C>0$, there exists a modulus of continuity $\omega_h$ that may depend on $C$ so that for all $\e>0$, $X,\tilde X\in \Sc_{2d}$ satisfying 
\begin{align}\label{eq:condcomp}
-\frac{3}{\e}I_{4d}\leq 
\begin{pmatrix}
    X&0\\0&\tilde X
\end{pmatrix}
\leq 
\frac{3}{\e}
\begin{pmatrix}
    I_{2d}&-I_{2d}\\-I_{2d}&I_{2d}
\end{pmatrix},
\end{align}
$(\theta^*,\tilde \theta^*)=((t^*,y^*,\mu^*),(\tilde t^*,\tilde y^*,\tilde \mu^*)) \in \mathring{\Theta}^2$ with $d_F((t^*,y^*,\mu^*),( t^*,\tilde y^*,\tilde \mu^*))\leq C\e$, and $r\in\R$ we have the inequality 
 \begin{align}
     &h\left(\theta^*,r, \frac{y^*-\tilde y^*}{\e},( X)^{11}, \frac{m(\mu^*-\tilde \mu^*)}{\e}+\frac{D_\mu\Psi(\mu^*)}{2\e},\frac{D_{x\mu}\Psi(\mu^*)}{2\e},( X)^{12},( X)^{22}\right)\notag\\
     &\leq h\left(\tilde \theta^*,r, \frac{y^*-\tilde y^*}{\e},-(\tilde X)^{11} ,\frac{m(\mu^*-\tilde \mu^*)}{\e}+\frac{D_\mu\Psi(\mu^*)}{2\e},\frac{D_{x\mu}\Psi(\mu^*)}{2\e},-(\tilde X)^{12} ,-(\tilde X)^{22}\right)\notag\\
     & \quad\quad+\omega_h\left(d_F(\theta^*,\tilde \theta^*)\right)\left(1+|y^*|+\int |x|^2\mu^*(dx)\right)\label{eq:contpsi}
\end{align}
where $\Psi$ is defined by \eqref{eq:defpsi} and its derivatives computed in \eqref{eq:derpsi}.

\item[(iii)] For all $(\theta,u,p,f,g)\in \Theta\times \R\times \R^d\times B^{d}_{\mathfrak{q}}\times B^{d\times d}_{\mathfrak{q}}$ the mapping 
    \begin{align}\label{eq:mono}
        \begin{pmatrix}
       X^{11}&X^{12}\\X^{12}&X^{22} 
    \end{pmatrix}\in \Sc_{2d}\mapsto h
    (\theta,u,p,X^{11} ,f,g,X^{12} ,X^{22})
    \end{align}
    is increasing for the order of symmetric matrices.
\end{itemize}

\end{assumption}

{
\begin{rmk}
In the next section, for problems of stochastic control with partial observation, we provide a simple sufficient condition for Assumption~\ref{assumeh}. In this example, to verify the regularity in the measure component, we rely on the fact the Hamiltonian is actually a minimization over integrals of regular coefficients w.r.t. the measures. Therefore the the regularity follows from Lemma~\ref{lem:metricrho}. 
\end{rmk}
}

We now use the version of Ishii's lemma in Theorem \ref{ishii} to prove the comparison of viscosity solutions, which is the main result of the paper. 
The proof of this theorem is provided in Section \ref{s.proofcomp}.

\begin{thm}\label{thm:comp}
    Assume that $u$, $-v:\Theta\mapsto \R$ are bounded from above, uniformly continuous in $t$, either $u$ or $v$ is $d_F$-Lipschitz continuous functions in $(y,\mu)$, and $u$ (resp. $v$) is a viscosity subsolution (resp. supersolution) of the equation 
    \begin{align*}
        -\pa_t U(\theta)-h(\theta,U(\theta),D_y U(\theta),D_{yy}^2 U(\theta),D_\mu U(\theta,\cdot),D_{x\mu}U(\theta,\cdot),\Dc^2_{y\mu}U(\theta) ,\Hc U(\theta))=0
    \end{align*}
    for $\theta=(t,y,\mu)\in\mathring{\Theta}$. 
    
    Then, under Assumption~\ref{assumeh} $u(T,\cdot)\leq v(T,\cdot)$ implies that $u(t,\cdot)\leq v(t,\cdot)$ for all $t\in [0,T]$.
\end{thm}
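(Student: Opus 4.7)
The proof will use a doubling-of-variables argument in the spirit of \cite{usersguide}, with the Wasserstein-specific Ishii's lemma of Theorem~\ref{ishii} at its core. Suppose for contradiction that $u(t_0,y_0,\mu_0)-v(t_0,y_0,\mu_0)>0$ for some interior $(t_0,y_0,\mu_0)\in\mathring{\Theta}$. After an exponential time-rescaling $(u,v)\mapsto (e^{Lt}u,e^{Lt}v)$ with $L>L_h$, which replaces the merely Lipschitz dependence of $h$ on the $U$-variable from Assumption~\ref{assumeh}(i) by strict decay in $U$ with rate $\gamma:=L-L_h>0$ while preserving (ii)--(iii) up to bounded rescaling factors, we may assume without loss of generality that $M:=\sup_{\mathring{\Theta}}(u-v)>0$ and $h(\theta,r,\dots)-h(\theta,s,\dots)\leq -\gamma(r-s)$ for $r\geq s$.

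For $\alpha>0$ consider the doubled functional
\begin{align*}
\Phi_\alpha(\theta,\tilde\theta):=u(\theta)-v(\tilde\theta)-\tfrac{\alpha}{2}d_F^2(\theta,\tilde\theta).
\end{align*}
Since $(\Pc_2(\R^d),\rho_F)$ is not locally compact by Lemma~\ref{lem:equivalence}, the supremum of $\Phi_\alpha$ need not be attained. Following \cite{cosso2021master,MR2144010}, I invoke the Borwein-Preiss smooth variational principle with the partial $C^2$-regular gauge $d_\sigma^2$ from \eqref{eq:defdsigma} to obtain a strict global maximizer $(\theta^*,\tilde\theta^*)\in\mathring{\Theta}^2$ of a perturbed functional $\Phi_\alpha-\psi$, where $\psi$ is a partial $C^2$-regular perturbation of arbitrarily small $\|\cdot\|_\infty$-norm, and a vanishing coercivity penalty in $y$ and in $\rho_F$-distance to a reference measure (removed at the end) keeps maximizers uniformly bounded despite the unboundedness of $\Theta$. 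The $d_F$-Lipschitz continuity of $u,v$ together with the standard estimate $\Phi_\alpha(\theta^*,\theta^*)\leq \Phi_\alpha(\theta^*,\tilde\theta^*)+\|\psi\|_\infty$ yields $d_F(\theta^*,\tilde\theta^*)=O(1/\alpha)$, hence $\alpha\,d_F^2(\theta^*,\tilde\theta^*)\to 0$ as $\alpha\to\infty$, while $u(\theta^*)-v(\tilde\theta^*)\geq M/2$.

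Invoking Theorem~\ref{ishii} with $\epsilon=1/\alpha$ supplies matrices $X,\tilde X\in\Sc_{2d}$ and jets in $\bar J^{2,+}u(\theta^*)$ and $\bar J^{2,-}v(\tilde\theta^*)$ whose finite-dimensional components are those determined by the differentials of $\tfrac{\alpha}{2}d_F^2$ together with the $\Psi,\tilde\Psi$ corrections from the centered-measure part, and whose matrix parts satisfy in particular $X+\tilde X\leq 0$. Writing the sub- and super-solution inequalities at these jets and subtracting cancels the common time-derivative $\alpha(t^*-\tilde t^*)$. Assumption~\ref{assumeh}(iii), combined with $X\leq -\tilde X$, permits the replacement of $(X^{11},X^{12},X^{22})$ on the subsolution side by $(-\tilde X^{11},-\tilde X^{12},-\tilde X^{22})$ without worsening the inequality. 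Assumption~\ref{assumeh}(ii), whose form \eqref{eq:contpsi} is tailor-made for the jet produced by Theorem~\ref{ishii}, then controls the remaining difference of Hamiltonians at $\theta^*$ versus $\tilde\theta^*$ by $\omega_h\bigl(\alpha d_F^2(\theta^*,\tilde\theta^*)+d_F(\theta^*,\tilde\theta^*)\bigr)\bigl(1+|y^*|+\int|x|^2\mu^*(dx)\bigr)$. Together with the strict monotonicity in $U$, this yields
\begin{align*}
\gamma\bigl(u(\theta^*)-v(\tilde\theta^*)\bigr)\leq \omega_h\bigl(\alpha d_F^2(\theta^*,\tilde\theta^*)+d_F(\theta^*,\tilde\theta^*)\bigr)\cdot C,
\end{align*}
with $C$ a constant controlled by the coercivity penalty. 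Sending $\alpha\to\infty$ annihilates the right-hand side while $\gamma(u(\theta^*)-v(\tilde\theta^*))\geq \gamma M/2>0$, the desired contradiction.

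The main obstacle is the joint construction of the Borwein-Preiss perturbation $\psi$ in the partial $C^2$-regular class and the verification that its derivatives, when added to those of $\tfrac{\alpha}{2}d_F^2$, furnish a jet of $u$ (resp.\ $v$) to which Theorem~\ref{ishii} and Assumption~\ref{assumeh}(ii) apply in a matching way: this is why $\rho_\sigma$ was designed with the mean term $|m(\mu)-m(\nu)|^2$ and the dyadic Gaussian-regularized structure of \eqref{eq:defrhosigma}, and why the subtle $D_\mu\Psi,D_{x\mu}\Psi$ corrections to the measure-jet in Theorem~\ref{ishii} are precisely those appearing in \eqref{eq:contpsi}. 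A second delicate point is to maintain interior maxima $(\theta^*,\tilde\theta^*)\in\mathring{\Theta}^2$ (away from $t=T$) via standard perturbations of the doubled functional, using $u(T,\cdot)\leq v(T,\cdot)$ and the uniform continuity in $t$; the estimates of Section~\ref{s.proofishii} and Section~5 on $\rho_F$-derivatives are the essential technical inputs throughout.
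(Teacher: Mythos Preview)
Your outline follows the paper's proof closely: exponential rescaling for strict monotonicity, coercive penalization, doubling in $d_F$, Borwein--Preiss with the gauge $d_\sigma$, Theorem~\ref{ishii}, then Assumption~\ref{assumeh}(ii) and the limit. Two places where your sketch hides work that the paper has to carry out explicitly are worth flagging.

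First, the coercive penalty cannot be a generic ``$\rho_F$-distance to a reference measure.'' Because Assumption~\ref{assumeh}(i)--(ii) carry the growth factor $(1+|y|+\int|x|^2\mu(dx))$, keeping $u_\iota:=u-\iota\chi$ a subsolution requires that the $\|\cdot\|_{\mathfrak q}$-norms of the derivatives of $\chi$, multiplied by that factor, be absorbed by $\chi$ itself. The paper achieves this with the specific choice $\chi(\theta)=e^{-\tilde Lt}\bigl(1+|y|^2+(\int|x|^2\mu(dx))^2\bigr)$; the \emph{square} on the second moment is what makes the bookkeeping close. Also, $\iota$ is not sent to zero: it is fixed small enough that $\sup(u_\iota-v)>0$, and the contradiction is obtained with that fixed $\iota$, so the bound on $|y^*|$ and $\int|x|^2\mu^*(dx)$ coming from \eqref{ea:boundthetastar} stays finite throughout the limits in $\e,\kappa$.

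Second, the jets from Theorem~\ref{ishii} have $\tfrac{\alpha}{2}D_\mu\Psi(\mu^*)$ on the $u$-side but $-\tfrac{\alpha}{2}D_\mu\tilde\Psi(\tilde\mu^*)$ on the $v$-side, whereas \eqref{eq:contpsi} has $D_\mu\Psi(\mu^*)$ on \emph{both} sides. The paper closes this gap by computing the discrepancy $I(x):=\tfrac{1}{2\e}\bigl(D_\mu\Psi(\mu^*,x)+D_\mu\tilde\Psi(\tilde\mu^*,x)\bigr)$ explicitly from \eqref{eq:derpsi} and showing $\|I\|_{\mathfrak q}+\|D_xI\|_{\mathfrak q}\leq C\,d_F^2(\theta^*,\tilde\theta^*)/\e$, then absorbing it via Assumption~\ref{assumeh}(i); your claim that the corrections are ``precisely those appearing in \eqref{eq:contpsi}'' overstates the match. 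Relatedly, the paper does not use Assumption~\ref{assumeh}(iii) in the comparison argument at all---the matrix ordering is consumed directly by (ii) through \eqref{eq:condcomp}---so your intermediate step of replacing $X$ by $-\tilde X$ via (iii) is unnecessary.
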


\begin{rmk}
\begin{itemize}
    \item[(i)] In order to avoid technicalities and concentrate solely on the issues associated with the absence of a Hilbert space structure in the Wasserstein space, we demonstrate our comparison result under the assumption of a Lipschitz continuous generator $h$. We hypothesize that by incorporating the pertinent penalizations it is possible to relax this uniform continuity assumption.

    \item[(ii)] Assumption \eqref{eq:contpsi} is the counterpart of the assumption in \cite[Inequality (3.14)]{usersguide}. The derivatives $D_{\mu}\Psi(\mu^*)$ and $D^2_{y\mu}\Psi(\mu^*)$ admit explicit Fourier decomposition in Lemma \ref{lem:propsliced}. For the Hamilton-Jacobi-Bellman equation we study below, this assumption requires integrability and regularity of the Fourier transform of the data of the problem.
    \item[(iii)] { It is possible to show a comparison principle for semi-continuous solutions. However, without the Lipschitz assumption on solutions, one could only get $\frac{\rho_F^2(\mu^*,\tilde \mu^*)}{\epsilon} \to 0 $ in \eqref{eq:bound:thetas}. Then a stronger assumption on the Hamiltonian would be required to show a comparison principle. }
    
    \item[(iv)] The Lipschitz property of value functions w.r.t. the Fourier Wasserstein distance can also be obtained from the non-degeneracy of idiosyncratic noise; see \cite{2023arXiv231202324D,2023arXiv230804097M,2023arXiv230508423D}
    \end{itemize}
\end{rmk}

\section{Application to stochastic control with partial observation}\label{ss.controlproblem}
As an application, we show that the value function of the stochastic control problem with partial observation is the unique viscosity solution of a parabolic equation. Proofs are postponed to Section~\ref{s.proofe}.

Suppose that we have two independent Brownian motions $V,W$ of dimensions $d_1$ and $d_2$, and a compact closed control set $A \subset \R^d$. Take the coefficients $b:\R^d \times A \to \R^d$, $\sigma:\R^d \times A \to \R^{d\times d_1}$, and $\tilde\sigma:A \to \R^{d \times d_2}$. We consider the following stochastic differential equations

\begin{align*}
dX^{t,\mu,\alpha}_s&=b(X^{t,\mu,\alpha}_s,\alpha_s) \, ds + \sigma(X^{t,\mu,\alpha}_s,\alpha_s) \, dV_s + \tilde \sigma(\alpha_s) \, dW_s, \quad \text{$t \leq s \leq T$}, \\
X_t^{t,\mu,\alpha}&=\xi, 
\end{align*}
where $\xi$ is independent of $V,W$ with distribution $\mu$  and $\alpha_t \in A$ is an admissible control adapted to only $W$. Since $\xi$ is independent of $V,W$, it can be easily checked that the distribution of $(X_s^{t,\mu,\alpha},\alpha_s)$ is independent of the choice of $\xi$.

Let $m_t$ denote the conditional law  $\mathcal{L}(X_t \,|\, \mathcal{F}^{W}_t)$. Then $m_t$ satisfies the equation
\begin{align}\label{eq:conditionlaw}
    d m^{t,\mu,\alpha}_s(l) &=  m^{t,\mu,\a}_s (L^{\alpha_s} l) \,ds + m^{t,\mu,\a}_s(M^{\a_s} l) \, dW_s, \quad \text{ $t \leq s \leq T$}  \\
    m^{t,\mu,\alpha}_t &= \mu, \notag
\end{align}
where $l:\R^d \to \R$ is any $C^2$ test function and 
\begin{align*}
    L^a l(\cdot):=&  b(\cdot,a)^\top Dl(\cdot) +\frac{1}{2} Tr \left(  (\sigma\sigma^{\top}(\cdot,a)  +\tilde\sigma \tilde \sigma^{\top}(a)) D^2  l(\cdot)  \right),\\
    M^a l(\cdot):=&   \tilde\sigma(a)^\top Dl(\cdot). 
\end{align*}

Take $\mathcal{A}:=\{\alpha=(\alpha_s)_{0\leq s \leq T}: \alpha_s \in A \text{ is $\mathcal{F}_s^W$ measurable for all $s \in [0,T]$} \}$ to be the set of all admissible controls. Given a running cost $f:\R^d \times A \to \R$, and a terminal cost $g: \R^d \to \R$, we define the cost of control $\alpha \in \mathcal{A}$,
\begin{align*}
    J(t,\mu, \alpha):=\E \left[\int_t^T f(X^{t,\mu,\alpha}_s, \alpha_s) \, ds + g(X^{t,\mu,\alpha}_T)  \right].
\end{align*}
We aim at solving the following optimization problem 
\begin{align*}
    v(t,\mu)=\inf_{\alpha \in \mathcal{A}} J(t,\mu,\alpha). 
\end{align*}

\begin{rmk}
    (i) This is a simple version of the filtering problem, where we assume the controller observes the process $(Y_s)=(W_s)_{0 \leq s \leq T}$. If $f$ and $g$ are also functions of the observation process $(Y_s)$, one may still expect that the value function $v$ is the unique viscosity solution of some second-order PDE. Let us take the value function 
    $$
    v(t,y,\mu):= \inf_{\alpha \in \mathcal{A}} \E\left[\int_t^T f(X^{t,\mu,\alpha}_s, Y_s,\alpha_s) \, ds + g(X_T^{t,\mu,\alpha},Y_T) \right],$$ 
    and the generator
    \begin{align*}
K(a,y,\mu,X^{11},p,q,X^{12},X^{22}):=&\int  f(x,y,a)+ b(x,a)^\top p(x) +\frac{1}{2} Tr\left(q(x) \sigma \sigma^\top (x,a) \right)\, \mu(dx) \\
    &+\frac{1}{2} Tr(X^{11})+Tr(\tilde{\sigma}(a)\mathbf{1}^\top X^{12})+\frac{1}{2} Tr( \tilde \sigma \tilde \sigma^\top(a) X^{22}).
    \end{align*}
    With the notation $\theta=(t,y,\mu)$, it can be checked that the HJB equation of value function 
    \begin{align*}
        -\pa_t v(\theta)&= \inf_{a \in A} K\left(a,y,\mu, D_{yy}^2 v(\theta), D_{\mu} v(\theta),D_{x \mu} v(\theta), \mathcal{D}^2_{y \mu}(\theta), \mathcal{H}v(\theta)\right), \\
        v(T,y,\mu)&= \int g(x,y) \, \mu(dx). 
    \end{align*}
fits in the form of Theorem~\ref{thm:comp}.  Here for simplicity of notation, we prove the case where the coefficients are not functions of $y$, and leave the general case to interested readers. 

(ii) In \cite{JMLR:v24:22-1001}, the authors characterize the asymptotic behavior of a learning problem by a second-order PDE on Wasserstein space, which fits in the form of \eqref{eq:introparabolic} and satisfies Assumption~\ref{assumeh}.

{(iii) We can also allow $b,f,\sigma,\tilde \sigma$ to be $\mu$ dependent with additional assumption on its dependence in $\mu$.  

(vi) We conjecture that our uniqueness result can be applied to first-order equations of the McKean-Vlasov control studied in \cite{soner2022viscosity} where the control $\a$ is allowed to be a feedback control dependent on $X$. In this case, similar to \cite{soner2022viscosity}, the admissibility condition of controls must require the regularity $x\mapsto b(x,\a(x)).$
}

\end{rmk}

Recall that $\lambda=d+7$ throughout the paper. 

\begin{assumption}\label{assumption1}
\begin{itemize}
    \item[(i)] The functions $b,\sigma,\tilde\sigma, f,g$ are bounded and continuous over their domains. 
    \item[(ii)] $b(\cdot,a),\sigma(\cdot,a)$ are $\lambda$-times differentiable with bounded derivatives uniform in $a$, i.e., 
    $$\sup_{a \in A,k=0,\dotso, \lambda} \lVert b^{(k)}(\cdot,a)\rVert_{\infty}+\sup_{a \in A}\lVert \sigma^{(k)}(\cdot,a) \rVert_{\infty}< \infty,$$ 
    and also 
    $$\sup_{a \in A,i,j=0,\dotso, d}||x^i b^j(x,a)||_\lambda+||b^j(x,a)||_\lambda+||f(\cdot,a)||_{\lambda}<\infty.$$
    \item[(iii)] Derivatives of cost functions $f,g$ are of exponential decay, i.e., there exist some positive constants $K,c$ such that $\sup_{a \in A}|f^{(k)}(x,a)|+|g^{(k)}(x)| \leq Ke^{-c|x|}$ for $k=0,\dotso, \lambda$.
\end{itemize}
\end{assumption}

\begin{lemma}\label{lem:measureflow_estimate}
    Under Assumption~\ref{assumption1}, the solution to \eqref{eq:conditionlaw} is pathwise continuous and it satisfies 
    \begin{align*}
        \E[W_2(m^{t,\mu,\alpha}_s, \mu)^2]+\E \left[ \lVert m_s^{t,\mu,\alpha}-\mu \rVert^2_{\lambda} \right] \leq L|s-t|,
    \end{align*}
    where $L$ is some positive constant depending only on coefficients $b$, $\sigma$, and $\tilde\sigma$. 
\end{lemma}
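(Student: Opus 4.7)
The plan is to handle the $W_2$-estimate, the Fourier-Wasserstein estimate, and pathwise continuity separately.

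For the $W_2$-bound the key observation is a conditional coupling argument. Since $\xi$ is independent of $(V,W)$, the random variable $X_t^{t,\mu,\a}=\xi$ is independent of $\Fc^W_s$ and hence has conditional law $\mu$ given $\Fc^W_s$, while by definition $X_s^{t,\mu,\a}$ has conditional law $m_s^{t,\mu,\a}$ given $\Fc^W_s$. Consequently $(X_t^{t,\mu,\a},X_s^{t,\mu,\a})$ is a coupling between $\mu$ and $m_s^{t,\mu,\a}$ under the regular conditional probability, so
$$W_2(m_s^{t,\mu,\a},\mu)^2\leq \tfrac12\,\E\bigl[|X_s^{t,\mu,\a}-X_t^{t,\mu,\a}|^2\,\big|\,\Fc^W_s\bigr]\quad\mbox{a.s.}$$
Taking expectation, combined with the boundedness of $b,\sigma,\tilde\sigma$ in Assumption~\ref{assumption1} and standard SDE increment estimates, yields $\E[W_2(m_s^{t,\mu,\a},\mu)^2]\leq C(s-t)$.

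For the Fourier-Wasserstein bound I would use the Fourier characterization of $\lVert\cdot\rVert_\lambda$ derived in Section~2,
$$\lVert m_s^{t,\mu,\a}-\mu\rVert_\lambda^2=(2\pi)^{-2d}\int\frac{|F_k(m_s^{t,\mu,\a})-F_k(\mu)|^2}{(1+|k|^2)^\lambda}\,dk,$$
and apply \eqref{eq:conditionlaw} to $h=f_k$ (splitting into real and imaginary parts) to obtain
$$F_k(m_s^{t,\mu,\a})-F_k(\mu)=\int_t^s m_r^{t,\mu,\a}(L^{\a_r}f_k)\,dr+\int_t^s m_r^{t,\mu,\a}(M^{\a_r}f_k)\,dW_r.$$
Since $|D^j f_k(x)|\leq |k|^j(2\pi)^{-d/2}$ for $j=0,1,2$ and the coefficients are uniformly bounded, one directly checks $\lVert L^af_k\rVert_\infty\leq C(1+|k|^2)$ and $\lVert M^af_k\rVert_\infty\leq C|k|$. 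Cauchy-Schwarz on the drift integral and the It\^o isometry on the martingale part then give
$$\E\bigl|F_k(m_s^{t,\mu,\a})-F_k(\mu)\bigr|^2\leq C(s-t)^2(1+|k|^2)^2+C(s-t)|k|^2.$$
Dividing by $(1+|k|^2)^\lambda$, applying Fubini, and integrating over $k$ concludes the bound; the specific choice $\lambda=d+7$ makes the integrals $\int (1+|k|^2)^{2-\lambda}dk$ and $\int |k|^2(1+|k|^2)^{-\lambda}dk$ finite.

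Pathwise continuity follows from the SDE form of \eqref{eq:conditionlaw}: for every bounded $C^2$ test function $h$, $s\mapsto m_s^{t,\mu,\a}(h)$ is a continuous semimartingale. Extending \eqref{eq:conditionlaw} to $h(x)=1+|x|^2$ (justified by the linear growth of the coefficients of $L^a,M^a$ on quadratics and uniform moment bounds on $X^{t,\mu,\a}$) additionally produces continuity of the second moment, so that weak continuity plus convergence of the second moment delivers continuity in $(\Pc_2(\R^d),W_2)$. The principal obstacle lies in the Fourier step: the drift term $L^af_k$ grows like $|k|^2$, so the naive integrand is of order $(1+|k|^2)^{2-\lambda}$ and one must choose $\lambda$ large enough to absorb this growth; the margin $\lambda-2>d/2$ is precisely what upgrades a H\"older-type bound to the desired linear-in-$(s-t)$ estimate.
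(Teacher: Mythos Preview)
Your argument is correct, but it diverges from the paper's proof in two places, and the trade-offs go in opposite directions.

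For the $W_2$-bound you use a direct conditional coupling: since $\xi$ is independent of $\Fc^W_s$, the pair $(X_t,X_s)$ under the regular conditional probability given $\Fc^W_s$ is already a coupling of $\mu$ and $m_s^{t,\mu,\a}$, and one line of SDE increment estimates finishes. The paper instead decomposes $X_s=\tilde X_s+Y_s$ with $Y_s=\int_t^s\tilde\sigma(\a_u)\,dW_u$, shows $m_s=(I_d+Y_s)_\sharp\tilde m_s$ a.s., and bounds $W_2(\tilde m_s,\mu)$ and $W_2(m_s,\tilde m_s)$ separately. Your route is shorter for the expectation bound; the paper's decomposition, however, immediately yields the \emph{almost sure} inequality $W_2(m_s,\mu)^2\leq L(s-t+|Y_s|^2)$, from which pathwise continuity drops out with no extra work.

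For the $\lVert\cdot\rVert_\lambda$-bound the comparison reverses. You run a Fourier computation (apply \eqref{eq:conditionlaw} to $f_k$, bound $L^af_k,M^af_k$, use It\^o isometry, integrate in $k$), which is correct and exploits $\lambda-2>d/2$. The paper bypasses all of this with the Sobolev embedding observed just before Lemma~\ref{lem:metricrho}: $\lVert\mu-\nu\rVert_\lambda\leq C\,W_1(\mu,\nu)\leq C\,W_2(\mu,\nu)$, so the $\lambda$-estimate is a one-line corollary of the $W_2$-estimate. Your argument works but is unnecessarily heavy here.
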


\begin{proposition}\label{prop:Lipschizvalue}
    Under Assumption~\ref{assumption1}, the value function $v: [0,T] \times \Pc_2(\R^d) \to \R$ is bounded and Lipschitz continuous with respect to $\rho_F$, uniformly continuous in time and satisfies the dynamic programming principle 
\begin{align*}
    v(t,\mu) &= \inf_{\alpha \in \mathcal{A}} \inf_{\tau \in \mathcal{T}_t^T} \E \left[\int_t^\tau f(X^{t,\mu,\alpha}_s,\alpha_s) \,ds + v(\tau, m^{t,\mu,\alpha}_{\tau})  \right] \\
    &=\inf_{\alpha \in \mathcal{A}} \sup_{\tau \in \mathcal{T}_t^T} \E \left[\int_t^\tau f(X^{t,\mu,\alpha}_s,\alpha_s) \,ds + v(\tau, m^{t,\mu,\alpha}_{\tau})  \right],
\end{align*}
where $\mathcal{T}_t^T$ is the set of $[t,T]$-valued stopping times $\tau$ with respect to the filtration $(\mathcal{F}^W_s)_{s \in [t,T]}$
\end{proposition}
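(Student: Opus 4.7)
The plan is to establish the four assertions in turn, with the uniform Sobolev bound on the dual cost functionals $\psi^\a_s,\phi^\a$ below being the main technical obstacle. Boundedness $\lVert v\rVert_\infty\leq T\lVert f\rVert_\infty+\lVert g\rVert_\infty$ is immediate from Assumption~\ref{assumption1}(i). The backbone of the remaining arguments is the linearization of the cost in the initial measure: because $\a$ is $\Fc^W$-adapted while $\xi\sim\mu$ is independent of $(V,W)$, Fubini yields
$$J(t,\mu,\a)=\int_t^T\mu(\psi^\a_s)\,ds+\mu(\phi^\a),\qquad \psi^\a_s(x):=\E[f(X^{t,x,\a}_s,\a_s)],\ \phi^\a(x):=\E[g(X^{t,x,\a}_T)],$$
which, combined with Lemma~\ref{lem:metricrho}, reduces the $\rho_F$-Lipschitz estimate to the bound $\sup_{\a,s}(\lVert\psi^\a_s\rVert_\lambda+\lVert\phi^\a\rVert_\lambda)<\infty$.

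This uniform Sobolev bound is the hard part. I would prove it by differentiating the flow $x\mapsto X^{t,x,\a}_s$ up to order $\lambda$: each successive spatial derivative satisfies a linear SDE driven by $V$ whose coefficients are derivatives of $b,\sigma$ along the path, and Assumption~\ref{assumption1}(ii) makes these uniformly bounded in $\a$, so all flow derivatives enjoy $L^p$-moments uniform in $(\a,s,x)$. A Fa\`a di Bruno expansion then bounds $|\pa^k\psi^\a_s(x)|$ by sums of $\E[|f^{(j)}(X^{t,x,\a}_s,\a_s)|\cdot\text{polynomial in flow derivatives}]$; the pointwise exponential decay of $f^{(j)},g^{(j)}$ in Assumption~\ref{assumption1}(iii) survives the expectation because bounded $b,\sigma,\tilde\sigma$ give $X^{t,x,\a}_s-x$ Gaussian tails uniform in $(\a,s,x)$, yielding $|\pa^k\psi^\a_s(x)|\leq Ce^{-c|x|/2}$ and hence $L^2(\R^d)$-integrability of the derivatives up to order $\lambda$. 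Taking an infimum over $\a$ then delivers $|v(t,\mu)-v(t,\nu)|\leq C\rho_F(\mu,\nu)$.

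Uniform continuity in $t$ I would then derive from the Lipschitz bound, Lemma~\ref{lem:measureflow_estimate}, and the DPP proved below: for $t\leq r$, the identity $v(t,\mu)=\inf_\a\E[\int_t^r f(X^{t,\mu,\a}_s,\a_s)\,ds+v(r,m^{t,\mu,\a}_r)]$ combined with boundedness of $f$ and the just-proved Lipschitz continuity of $v(r,\cdot)$ gives $|v(t,\mu)-v(r,\mu)|\leq\lVert f\rVert_\infty(r-t)+C\sup_\a\E[\rho_F(m^{t,\mu,\a}_r,\mu)]$; each of the three components of $\rho_F$ is bounded by $C\sqrt{r-t}$ via Lemma~\ref{lem:measureflow_estimate} (directly for the Fourier--Sobolev term, and via $W_2\leq\sqrt{L(r-t)}$ together with uniform second-moment control for the mean and variance terms).

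Finally, the DPP rests on the Markov-type flow identity $m^{t,\mu,\a}_s=m^{r,m^{t,\mu,\a}_r,\a}_s$ for $t\leq r\leq s$, which holds because \eqref{eq:conditionlaw} is driven only by $W$ and is pathwise unique in $m$ under Assumption~\ref{assumption1}(ii). One direction follows by concatenating controls at $\tau$ and invoking the sub-optimality $v(\tau,m_\tau)\leq J(\tau,m_\tau,\a)$; the reverse inequality, and the $\sup_\tau$ formulation, follow from a standard measurable-selection argument gluing $\e$-optimal $\Fc^W$-adapted controls after $\tau$, which uses the continuity of $v$ in $\mu$ just established. Choosing $\tau\equiv T$ and $\tau\equiv t$ then collapses both $\inf_\tau$ and $\sup_\tau$ to $v(t,\mu)$. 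The subtlety here is purely technical---measurable selection for $\Fc^W$-adapted controls valued in the compact set $A$---and can be executed as in \cite{cosso2021master}.
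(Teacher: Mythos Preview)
Your proposal is correct and follows essentially the same strategy as the paper: linearize $J(t,\mu,\a)=\mu(J(t,\cdot,\a))$ via independence of $\xi$ from $(V,W)$, then establish the uniform Sobolev bound $\sup_{\a}\lVert J(t,\cdot,\a)\rVert_\lambda<\infty$ by differentiating the flow $x\mapsto X^{t,x,\a}_s$ up to order $\lambda$, controlling moments of the flow derivatives via the linear SDEs they satisfy, and combining with the decay hypotheses on $f^{(k)},g^{(k)}$ to obtain $L^2$-integrability of all spatial derivatives.

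Two small tactical differences are worth noting. First, for the decay of $x\mapsto\E[g(X^{t,x}_u)]$ the paper uses high-moment Markov bounds on $|X^{t,x}_u-x|$ to obtain polynomial decay $C(e^{-c|x|}+|x|^{-(\alpha-1)})$, whereas you invoke Gaussian tails to retain exponential decay; both suffice. Second, for uniform continuity in $t$ the paper works directly on $J$ via the flow identity $J(t,\mu,\a)=\E[\int_t^s f\,+\,J(s,m^{t,\mu,\a}_s,\a)]$ together with Lemma~\ref{lem:measureflow_estimate}, bypassing the DPP entirely; your route (Lipschitz in $\mu$ $\Rightarrow$ DPP $\Rightarrow$ time continuity) is valid but requires bounding $\E[\rho_F(m_r,\mu)]$ rather than just $\E[\lVert m_r-\mu\rVert_\lambda]$, which forces you to also control the mean and covariance terms of $\rho_F$ through $W_2$ and uniform second-moment estimates. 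The paper's ordering is slightly cleaner here since it obtains full $(t,\mu)$-continuity of $v$ before invoking DPP, which is then dismissed as standard given continuity.
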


Another crucial ingredient to show the viscosity property of value functions is It\^{o}'s formula for measure-valued process. Our argument relies on the It\^{o}-Wentzell formula, see e.g.  \cite{MR0995291}, and the fact that the second-order generator of $m_t$ is given by the Wasserstein Hessian $\mathcal{H}$, and hence we require weaker regularity.

\begin{proposition}[It\^{o}'s formula for measure valued SDE]\label{lem:Ito}
Suppose $\psi: [0,T] \times \Pc_2(\R^d) \to \R$ is partially $C^2$-regular. Then we have that 
\begin{align*}
    d\psi(s, m_{s})=& \pa_t \psi(s,m_{s} ) \, ds + \int b(x, \alpha_s)^\top D_{\mu} \psi(s,m_s)(x)\, m_s(dx) \, ds  \\
    &+ \frac{1}{2} \int  Tr \left(D_{x \mu} \psi(s,m_s)(x) \sigma\sigma^\top(x,\alpha_s)\right) m_s(dx) \, ds \\
    &+ \frac{1}{2} Tr (\mathcal{H} \psi(s,m_s) \tilde\sigma \tilde\sigma^\top(\alpha_s)) \, ds+ D_{\mu} \psi(s,m_s)([m_s])\tilde \sigma(\alpha_s) \, dW_s.
\end{align*}
\end{proposition}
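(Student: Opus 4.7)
The plan is to exploit the fact that in the dynamics of $X$ the increment $\tilde\sigma(\alpha_s)\,dW_s$ is a pure spatial translation independent of $x$, so the $W$-noise in the Zakai equation \eqref{eq:conditionlaw} amounts to translating $m_s$ as a whole. Applying \eqref{eq:conditionlaw} to the coordinate maps $h(x)=x_i$ isolates this noise as a finite-dimensional It\^o process for the mean $\bar m_s:=m(m_s)$, namely $d\bar m_s=\beta_s\,ds+\tilde\sigma(\alpha_s)\,dW_s$ with $\beta_s:=\int b(x,\alpha_s)\,m_s(dx)$. I would then introduce the centered flow $\tilde m_s:=\Sc_0(m_s)=(I_d-\bar m_s)_\sharp m_s$ and show, via the It\^o--Wentzell formula of Kunita \cite{MR0995291} applied to $m_s(g(\cdot-\bar m_s))$ for smooth test $g$, that $\tilde m_s$ is a pure-drift measure-valued semimartingale. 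The two $dW_s$ contributions $\tilde\sigma(\alpha_s)^\top m_s(Dg(\cdot-\bar m_s))$ from the Zakai martingale and $-m_s(Dg(\cdot-\bar m_s))^\top\tilde\sigma(\alpha_s)$ from the pairing of $m_s$ against the random shift $-Dg(\cdot-\bar m_s)^\top d\bar m_s$ are exact negatives and cancel. On the $ds$ side the three $\tilde\sigma\tilde\sigma^\top$ terms --- $\tfrac{1}{2}m_s(Tr(\tilde\sigma\tilde\sigma^\top D^2g(\cdot-\bar m_s)))$ from the Zakai drift, $\tfrac{1}{2}m_s(Tr(\tilde\sigma\tilde\sigma^\top D^2g(\cdot-\bar m_s)))$ from the It\^o correction of the translation, and $-m_s(Tr(\tilde\sigma\tilde\sigma^\top D^2g(\cdot-\bar m_s)))$ from the Kunita cross variation between $dm_s$ and $d\bar m_s$ --- sum to zero. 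The remaining drift reads $d\tilde m_s(g)=m_s([b(\cdot,\alpha_s)-\beta_s]^\top Dg(\cdot-\bar m_s))\,ds+\tfrac{1}{2}m_s(Tr(\sigma\sigma^\top(\cdot,\alpha_s)D^2g(\cdot-\bar m_s)))\,ds$.

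With this decomposition I would write $\psi(s,m_s)=\phi(s,\bar m_s,\tilde m_s)$ where $\phi(t,z,\mu):=\psi(t,(I_d+z)_\sharp\mu)$ is $C^{1,2}$ in $(t,z)$ by partial $C^2$-regularity and admits the linear $\mu$-derivative $\frac{\delta\phi}{\delta\mu}(t,z,\mu,x)=\frac{\delta\psi}{\delta\mu}(t,(I_d+z)_\sharp\mu,x+z)$. The identity $(I_d+w)_\sharp(I_d+\bar m_s)_\sharp\tilde m_s=(I_d+w)_\sharp m_s$ together with Lemma~\ref{lem:hespartial} yield $\partial_z\phi(s,\bar m_s,\tilde m_s)=D_\mu\psi(s,m_s)([m_s])$ and $\partial^2_{zz}\phi(s,\bar m_s,\tilde m_s)=\mathcal{H}\psi(s,m_s)$. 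An It\^o formula applied to $\phi(s,\bar m_s,\tilde m_s)$ --- with no second-order correction in $\mu$, since $\tilde m_s$ carries no $dW$-noise and $\phi$ is only linearly differentiable in $\mu$ --- produces $\partial_t\psi\,ds$, the Brownian increment $D_\mu\psi(s,m_s)([m_s])^\top\tilde\sigma(\alpha_s)\,dW_s$, the correction $\tfrac{1}{2}Tr(\mathcal{H}\psi(s,m_s)\tilde\sigma\tilde\sigma^\top(\alpha_s))\,ds$, the drift $\beta_s^\top D_\mu\psi(s,m_s)([m_s])\,ds$ from $\partial_z\phi\cdot\beta_s$, and the pairing of $\frac{\delta\phi}{\delta\mu}(s,\bar m_s,\tilde m_s,\cdot)$ against the drift of $\tilde m_s$ computed above. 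Changing variables $y=x+\bar m_s$ turns this last pairing into $\int b(y,\alpha_s)^\top D_\mu\psi(s,m_s)(y)\,m_s(dy)\,ds-\beta_s^\top D_\mu\psi(s,m_s)([m_s])\,ds+\tfrac{1}{2}\int Tr(D_{x\mu}\psi(s,m_s)(y)\sigma\sigma^\top(y,\alpha_s))\,m_s(dy)\,ds$; the two $\beta_s^\top D_\mu\psi(s,m_s)([m_s])\,ds$ contributions cancel, producing the claimed formula.

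The main obstacle is the rigorous It\^o--Wentzell computation in the second step: the two $dW_s$ contributions and three $ds$-level $\tilde\sigma\tilde\sigma^\top$-terms must be accounted for with the correct signs, which requires careful bookkeeping of the measure-valued cross variation between $dm_s$ and $d\bar m_s$ driven by the common Brownian motion $W$. A secondary point is justifying the chain rule for $\phi$ in the $\mu$-argument despite only linear differentiability there: one observes that $s\mapsto\tilde m_s(g)$ is absolutely continuous for each smooth $g$ by the first step, so that $r\mapsto\phi(s,z,\tilde m_r)$ is absolutely continuous with derivative given by the pairing of $\frac{\delta\phi}{\delta\mu}$ against the $\mu$-drift; an approximation of $\psi$ by cylindrical functions built from the Fourier basis $\{f_k\}$ already employed in the paper, combined with the uniform integrability afforded by Assumption~\ref{assumption1} and Lemma~\ref{lem:measureflow_estimate}, completes the justification.
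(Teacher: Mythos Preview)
Your proposal is correct and follows the same overall strategy as the paper---decompose $m_s$ as a spatial translation of a pure-drift measure flow, then apply the It\^o--Wentzell formula in the finite-dimensional translation variable---but with a different choice of translation. You center by the mean $\bar m_s=m(m_s)$, which forces you to carry the extra drift $\beta_s$ and to verify the exact cancellation of the two $dW_s$ contributions and the three $\tilde\sigma\tilde\sigma^\top$ terms when deriving that $\Sc_0(m_s)$ is pure drift. The paper instead subtracts the state process itself, setting $Y_s:=\int_0^s\tilde\sigma(\alpha_u)\,dW_u$, $\tilde X_s:=X_s-Y_s$, and $\tilde m_s:=\mathcal{L}(\tilde X_s\,|\,\mathcal F^W_s)$, so that $m_s=(I_d+Y_s)_\sharp\tilde m_s$; since $\tilde X$ has no $dW$-term, $\tilde m_s(\omega)$ is manifestly an absolutely continuous flow for a.e.\ $\omega$, with no cancellation to check. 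The paper then writes $\Psi_s(y):=\psi(s,(I_d+y)_\sharp\tilde m_s)$ and applies It\^o--Wentzell to $\Psi_s(Y_s)$, which is exactly your final step with $(Y_s,\tilde m_s)$ in place of $(\bar m_s,\Sc_0(m_s))$; because $Y_s$ is a pure martingale there is no $\beta_s$-cancellation either. Your route is entirely measure-intrinsic (never reintroducing the underlying state $X$), while the paper's shortcut through $\tilde X_s$ avoids both It\^o--Wentzell bookkeeping steps you flagged as obstacles.
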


Let us define for $(a,\mu,p,q,M) \in A  \times \Pc_2(\R^d) \times B^d_{\mathfrak{q}} \times B^{d \times d}_{\mathfrak{q}} \times \mathcal{S}^d $
\begin{align*}
    K(a,\mu,p,q,M):=& \int  f(x,a)+ b(x,a)^\top p(x) +\frac{1}{2} Tr\left(q(x) \sigma \sigma^\top (x,a) \right)\, \mu(dx) \\
    &+\frac{1}{2} Tr( \tilde \sigma \tilde \sigma^\top(a) M).
\end{align*}

\begin{thm}[Viscosity solution]\label{thm:viscosity_property}
    Under Assumption~\ref{assumption1}, the value function is the unique viscosity solution of the equation 
    \begin{align*}
        -\pa_t v(t,\mu)=& \inf_{a \in A} K(a, \mu, D_{\mu} v(t,\mu), D_{x \mu} v(t,\mu), \mathcal{H}v(t,\mu)) \\
        v(T,\mu)=&\mu(g). 
    \end{align*}
\end{thm}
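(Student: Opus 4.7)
The plan is to show separately (a) that $v$ is both a viscosity subsolution and supersolution of the HJB equation, and (b) that the Hamiltonian
$$H(\theta, r, p, X^{11}, f, g, X^{12}, X^{22}) := -\inf_{a \in A} K(a, \mu, f, g, X^{22})$$
satisfies Assumption~\ref{assumeh}, so Theorem~\ref{thm:comp} applies. Since Proposition~\ref{prop:Lipschizvalue} already establishes that $v$ is bounded, $\rho_F$-Lipschitz in $\mu$, and uniformly continuous in $t$, uniqueness will then follow immediately: any other viscosity solution $\tilde v$ yields both $v \leq \tilde v$ and $\tilde v \leq v$ on $[0,T]\times \Pc_2(\R^d)$ via Theorem~\ref{thm:comp}, with equality of terminal data $v(T,\mu) = \tilde v(T,\mu) = \mu(g)$ forced by the problem formulation.

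For the subsolution property, take a partial $C^2$-regular test $\varphi$ so that $v - \varphi$ attains a local maximum at $(t_0, \mu_0) \in \mathring{\Theta}$, normalized so that $v(t_0, \mu_0) = \varphi(t_0, \mu_0)$. For any fixed $a \in A$, apply the upper DPP bound from Proposition~\ref{prop:Lipschizvalue} with the constant control $\alpha \equiv a$ and the deterministic time $\tau = t_0 + h$ for small $h > 0$, to get
$$0 \leq \E\!\left[\int_{t_0}^{t_0+h} f(X_s^{t_0,\mu_0,a}, a)\, ds + \varphi(t_0+h, m_{t_0+h}^{t_0,\mu_0,a}) - \varphi(t_0, \mu_0)\right].$$
Apply Proposition~\ref{lem:Ito} to $\varphi(\cdot, m_\cdot^{t_0,\mu_0,a})$, take expectations to kill the $dW_s$-martingale term, use the tower identity $\E[f(X_s,a)] = \E[m_s(f(\cdot,a))]$, divide by $h$, and let $h \downarrow 0$. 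Continuity of the integrand at $s = t_0$ together with $m_{t_0} = \mu_0$ yields
$$-\partial_t \varphi(t_0, \mu_0) \leq K\bigl(a, \mu_0, D_\mu\varphi(t_0,\mu_0), D_{x\mu}\varphi(t_0,\mu_0), \mathcal{H}\varphi(t_0,\mu_0)\bigr)$$
for every $a \in A$; taking infimum over $a$ delivers the subsolution inequality. The supersolution inequality follows symmetrically from the lower DPP bound together with a measurable selection of $\varepsilon$-optimal controls, using compactness of $A$ and continuity of $K$ in $a$.

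It remains to verify Assumption~\ref{assumeh} for $H$. Monotonicity (iii) is immediate from $\tilde\sigma\tilde\sigma^\top \geq 0$. The Lipschitz bound (i) follows from Assumption~\ref{assumption1}(i): boundedness of $b, \sigma, \tilde\sigma$ combined with $|\int b(\cdot,a)\cdot(p-\tilde p)\, d\mu| \leq \|b\|_\infty \|p - \tilde p\|_{\mathfrak{q}}(1 + \int |x|^2 d\mu)$ and analogous trace estimates, all uniform in $a$. The delicate part is (ii): one must control the change $K(a, \mu^*, \cdot) - K(a, \tilde\mu^*, \cdot)$ when the measure-derivative arguments are the explicit values $\tfrac{1}{2}D_\mu\Psi(\mu^*)$, $\tfrac{1}{2}D_{x\mu}\Psi(\mu^*)$ produced by Theorem~\ref{ishii}. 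The running-cost difference $\int f(\cdot, a)\, d(\mu^* - \tilde\mu^*)$ is controlled by $\|f(\cdot,a)\|_\lambda < \infty$ and Lemma~\ref{lem:metricrho}; the drift piece $\int b(\cdot,a)\cdot\tfrac{1}{2}D_\mu\Psi(\mu^*)\, d(\mu^* - \tilde\mu^*)$ and the second-order integral piece are handled by combining the explicit Fourier expansion of $D_\mu\Psi, D_{x\mu}\Psi$ derived in Section 5 with the $\lambda$-Sobolev regularity of $b$ in Assumption~\ref{assumption1}(ii); the trace term involving $X^{22}$ and $-\tilde X^{22}$ is absorbed using the matrix inequality \eqref{eq:condcomp} and the boundedness of $\tilde\sigma$. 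The main obstacle is precisely this step: extracting a single modulus $\omega_h(d_F^2/\varepsilon + d_F)$, uniform in $a \in A$, requires pairing the Fourier-Wasserstein estimates of Section 5 with Assumption~\ref{assumption1} in a careful way, and is the technical heart of the argument; once it is secured, Theorem~\ref{thm:comp} closes the proof of uniqueness.
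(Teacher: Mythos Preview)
Your outline matches the paper's proof almost exactly: subsolution via constant control and the It\^o formula of Proposition~\ref{lem:Ito}, supersolution via near-optimal controls from the DPP, and uniqueness by verifying Assumption~\ref{assumeh} for the Hamiltonian, with part~(ii) handled through the explicit Fourier expression for $D_\mu\Psi$, $D_{x\mu}\Psi$ together with the $\lambda$-Sobolev bounds on $b,f$ from Assumption~\ref{assumption1}(ii).

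Two small points. First, in the subsolution step you use the deterministic time $t_0+h$, but the local-maximum condition only gives $v\le\varphi$ on a neighborhood, and $m_{t_0+h}^{t_0,\mu_0,a}$ need not lie there almost surely; the paper inserts the stopping time $\tau_n=\min\{t_0+1/n,\ \inf\{s:\rho_F(m_s,\mu_0)\ge\delta\}\}$ to force this, then uses $n(\tau_n-t_0)\to 1$ to pass to the limit. Second, your sign in $H:=-\inf_a K$ is off: the equation $-\partial_t v-\inf_a K=0$ means $h=\inf_a K$, and indeed it is $\inf_a K$ (not its negative) that is increasing in $X^{22}$ by $\tilde\sigma\tilde\sigma^\top\ge 0$. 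Both are easy fixes and do not affect the strategy.
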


\section{Auxiliary Results}

\subsection{Differentiability of the metrics}

We will use the following lemma to compute derivatives in the Wasserstein space. 
\begin{lemma}\label{lem:u0}
    Let $u:\Pc_2(\R^d)\mapsto \R$ be first-order differentiable, i.e., $u \in C^1$. Define $u_0:\Pc_2(\R^d)\mapsto \R$ by the equation 
    $u_0(\mu):=u(\Sc_0(\mu))=u((I_d-m(\mu))\sharp \mu)$. 
    Then, $u_0$ is also differentiable, and for all $x \in \R^d$ we have
    $$D_\mu u_0(\mu,x)=D_\mu u(\Sc_0(\mu),x-m( \mu))-D_\mu u(\Sc_0(\mu),[\Sc_0(\mu_0)]).$$
    Therefore, the derivative of $\mu\mapsto F_k(\Sc_0(\mu))$ is
    \begin{align*}
        D_\mu (F_k(\Sc_0(\mu)))(x)&=i ke^{ik\cdot m(\mu)}\left(  \int f_k(z)\mu(dz)-f_k(x)\right)= i k (F_k(\Sc_0(\mu))-f_k(x-m(\mu))).
    \end{align*}
\end{lemma}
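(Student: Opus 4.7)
The plan is to compute the linear functional derivative of $u_0$ directly by decomposing the $\mu$-dependence into the dependence through the mean $m(\mu)$ and the dependence through the underlying measure itself. To organize the computation, I introduce the auxiliary function $G:\R^d\times \Pc_2(\R^d)\mapsto \R$ defined by $G(m,\nu):=u((I_d-m)_\sharp \nu)$, so that $u_0(\mu)=G(m(\mu),\mu)$.

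At fixed $\nu$, writing $(I_d-m-\e v)_\sharp\nu=(I_d-\e v)_\sharp((I_d-m)_\sharp \nu)$ and invoking the push-forward directional derivative identity from \cite[Proposition 2.3]{cdll2019} recalled just before the statement of this lemma, I obtain
\begin{align*}
\frac{\pa}{\pa \e}\bigg|_{\e=0}G(m+\e v,\nu)=-v^\top D_\mu u((I_d-m)_\sharp\nu,[(I_d-m)_\sharp\nu]).
\end{align*}
At fixed $m$, the linearity of the push-forward in $\nu$ immediately gives the linear functional derivative
\begin{align*}
\frac{\d G}{\d \nu}(m,\nu,x)=\frac{\d u}{\d \mu}((I_d-m)_\sharp\nu,x-m),
\end{align*}
modulo an additive constant in $x$ that is irrelevant when taking the subsequent $D_x$.

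Since $m(\mu)=\int x\,\mu(dx)$ has linear derivative $x$, the chain rule for linear functional derivatives (verified routinely from the defining incremental ratio using continuity of $\frac{\d u}{\d\mu}$ and $D_\mu u$ together with the quadratic growth controls inherent in $B_{\mathfrak{q}}$) yields, with $m(\mu')-m(\mu)=\int x\,d(\mu'-\mu)(x)$,
\begin{align*}
\lim_{h\to 0}\frac{u_0(\mu+h(\mu'-\mu))-u_0(\mu)}{h}=\int\left[\frac{\d u}{\d \mu}(\Sc_0(\mu),x-m(\mu))-x^\top D_\mu u(\Sc_0(\mu),[\Sc_0(\mu)])\right]d(\mu'-\mu)(x).
\end{align*}
Taking $D_x$ inside the bracket then yields the claimed identity
\begin{align*}
D_\mu u_0(\mu,x)=D_\mu u(\Sc_0(\mu),x-m(\mu))-D_\mu u(\Sc_0(\mu),[\Sc_0(\mu)]).
\end{align*}

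For the second statement, I specialize to $u(\mu)=F_k(\mu)=\int f_k(x)\,\mu(dx)$, whose linear derivative is $f_k$ and whose $L$-derivative is $D_\mu u(\mu,x)=D_x f_k(x)=-ik\,f_k(x)$. Using the identities $f_k(x-m(\mu))=e^{ik\cdot m(\mu)}f_k(x)$ and $F_k(\Sc_0(\mu))=e^{ik\cdot m(\mu)}\int f_k(z)\,\mu(dz)$, both expressions displayed in the lemma follow by direct algebraic substitution into the general formula above. The only mildly delicate step is the justification of the chain rule for linear functional derivatives; this can be handled by a Taylor expansion of the defining incremental ratio together with the quadratic growth control of the derivatives of $u$, or equivalently by viewing $(m,\nu)\mapsto G(m,\nu)$ as jointly $C^1$ along the affine interpolation $s\mapsto(m(\mu)+s(m(\mu')-m(\mu)),\,\mu+s(\mu'-\mu))$.
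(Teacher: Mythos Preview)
Your argument is correct, but it follows a different path from the paper's. The paper works directly with push-forward perturbations: it fixes a bounded continuous vector field $h$, sets $\mu^\e:=(I_d+\e h)_\sharp\mu$, verifies the algebraic identity
\[
\Sc_0(\mu^\e)=\bigl(I_d+\e(h(\cdot+m(\mu))-h([\mu]))\bigr)_\sharp\Sc_0(\mu),
\]
and then reads off $D_\mu u_0(\mu,\cdot)$ in one step from the push-forward characterization of $D_\mu u$ applied at $\Sc_0(\mu)$. There is no passage through the linear derivative $\frac{\delta u}{\delta\mu}$ and no chain rule to justify.

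By contrast, you factor through $G(m,\nu)=u((I_d-m)_\sharp\nu)$, compute the $m$- and $\nu$-partial derivatives separately, and combine them via a chain rule at the level of linear functional derivatives before differentiating in $x$. The trade-off is that the paper's route is shorter and completely sidesteps the ``mildly delicate'' chain-rule justification you flag (which you do not actually carry out), while your decomposition makes the mean/centered structure of $u_0$ more transparent and would adapt more readily to higher-order derivatives or to other functionals of $(m(\mu),\Sc_0(\mu))$. Both approaches yield the same formula and the same specialization to $F_k(\Sc_0(\mu))$.
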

\begin{proof}
    We fix $\mu\in \Pc_2(\R^d)$, $h$ a continuous and bounded function and define $\mu^\e:=(I_d+\e h)_\sharp \mu$. Then, we have the equality of laws 
    \begin{align*}
    \Sc_0(\mu^\e)&=(I_d- m(\mu)- \e h([\mu]))_\sharp \mu^\e \\
    &=(I_d+\e h(\cdot)- m(\mu)-\e h([\mu]))_{\sharp} \mu \\
    &=(I_d+\e(h(\cdot+m(\mu))-h([ \mu])))_\sharp \Sc_0(\mu).
    \end{align*}
    By direct computation we have the following limit
    \begin{align*}
       \lim\limits_{\e \to 0} \frac{u_0(\mu^\e)-u_0(\mu)}{\e}&=\lim\limits_{\e \to 0}\frac{u\left((I_d+\e(h(\cdot+ m( \mu))-h([ \mu])))_\sharp \Sc_0(\mu)\right)-u(\Sc_0(\mu))}{\e}\\
        &=\int D^\top_\mu u(\Sc_0(\mu),x)(h(x+m( \mu))-h([ \mu])) \, \Sc_0(\mu)(dx)\\
        &=\int D^\top_\mu u(\Sc_0(\mu),x-m( \mu))h(x) \, \mu(dx)- D^\top_\mu u(\Sc_0(\mu),[\Sc_0(\mu)]])h([ \mu])\\
        &=\int \big(D_\mu u(\Sc_0(\mu),x-m(\mu))- D_\mu u(\Sc_0(\mu),[\Sc_0(\mu)])\big)^\top h(x) \, \mu(dx)
    \end{align*}
    which concludes the proof. 
\end{proof}

Denote the canonical basis of $\R^d$ by $( e_1,\ldots, e_d)$. Then one can easily show that $\mu \mapsto V(\mu)$ is $L$-differentiable and for all $i,j$
$$D_\mu V^{i,j}(\mu,x)=(x^j-m^j(\mu)) e_i+(x^i-m^i( \mu)) e_j,$$
where $V^{i,j}$ stands for the $(i,j)$-entry of matrix $V$. With these preparation, we compute the derivatives of $\rho_F^2(\mu,\nu)$. For a function $f:\Pc_2(\R^d) \times \Pc_2(\R^d) \to \R$, we define its partial Hessian w.r.t. $(\mu,\nu)$ via 
\begin{align*}
    \mathcal{H}_{\mu,\nu} f(\mu,\nu)= \left(\frac{d^2}{dx_idx_j}\Big|_{x_1=x_2=0} f((I_d+x_1)_{\sharp}\mu,(I_d+x_2)_{\sharp}\nu) \right)_{1\leq i,j \leq 2} \in \R^{2d \times 2d}. 
\end{align*}

\begin{lemma}\label{lem:propsliced}
Let $\mu,\nu\in \Pc_2(\R^k)$. 
Then $(\mu,\nu) \mapsto \rho^2_F(\mu,\nu)$ is twice-differentiable, and 
\begin{align*}
    D_{\mu}\rho^2_F(\mu,\nu,x)=&2 (m( \mu)-m (\nu))+{4(V(\mu)-V(\nu))(x-m(\mu))}\\
    &+2\int\frac{Re\left(i k\left(  F_k(\Sc_0(\mu))-f_k(x-m(\mu))\right)(F_k(\Sc_0(\mu))-F_k(\Sc_0(\nu)))^*\right)}{(1+|k|^2)^\lambda}dk\\
    D_{\mu x} \rho^2_F(\mu,\nu,x)=& {4 (V(\mu)-V(\nu))}\\
    &-2\int\frac{kk^\top Re\left(f_k(x-m(\mu))(F_k(\Sc_0(\mu))-F_k(\Sc_0(\nu)))^*\right)}{(1+|k|^2)^\lambda}dk\\   
\Hc_{\mu,\nu}\rho^2_F(\mu,\nu)=&2\begin{pmatrix}I_d&-I_d\\-I_d&I_d\end{pmatrix}.
\end{align*}
Furthermore, $D_\mu \rho_F^2,D_{x \mu }\rho_F^2$ are continuous in $\mu$ in any of the equivalent topologies considered.
\end{lemma}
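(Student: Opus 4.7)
The plan is to decompose $\rho_F^2(\mu,\nu)$ into its three natural pieces: the mean term $A(\mu,\nu):=|m(\mu)-m(\nu)|^2$, the covariance term $B(\mu,\nu):=|V(\mu)-V(\nu)|^2$, and the Fourier term $C(\mu,\nu):=\int\frac{|F_k(\Sc_0(\mu))-F_k(\Sc_0(\nu))|^2}{(1+|k|^2)^\lambda}\,dk$, and to compute the $L$-derivatives of each piece separately. For $A$, the identity $D_\mu m(\mu,x)=I_d$ (seen directly from the linearity of the mean) yields $D_\mu A(\mu,\nu,x)=2(m(\mu)-m(\nu))$ and $D_{x\mu}A(\mu,\nu,x)=0$. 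For $B$, the formula $D_\mu V^{i,j}(\mu,x)=(x^j-m^j(\mu))e_i+(x^i-m^i(\mu))e_j$ recalled in the text, combined with the chain rule $D_\mu B(\mu,\nu,x)=2\sum_{i,j}(V^{i,j}(\mu)-V^{i,j}(\nu))D_\mu V^{i,j}(\mu,x)$, produces the announced $4(V(\mu)-V(\nu))(x-m(\mu))$; differentiating once more in $x$ yields $4(V(\mu)-V(\nu))$.

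For the Fourier term $C$, I would invoke Lemma \ref{lem:u0} applied to the linearly differentiable function $\mu\mapsto F_k(\Sc_0(\mu))$, which the lemma itself computes:
\[
D_\mu(F_k(\Sc_0(\mu)))(x)=ik\bigl(F_k(\Sc_0(\mu))-f_k(x-m(\mu))\bigr).
\]
Since $|F_k(\Sc_0(\mu))-F_k(\Sc_0(\nu))|^2=(F_k(\Sc_0(\mu))-F_k(\Sc_0(\nu)))(F_k(\Sc_0(\mu))-F_k(\Sc_0(\nu)))^*$, the chain rule together with the fact that for real-valued functions the derivative picks up only the real part gives, after pulling the derivative through the $k$-integral (justified by dominated convergence using the decay from the weight $(1+|k|^2)^{-\lambda}$ and the uniform bound $|F_k|\leq 1$),
\[
D_\mu C(\mu,\nu,x)=2\int\frac{Re\bigl(ik(F_k(\Sc_0(\mu))-f_k(x-m(\mu)))(F_k(\Sc_0(\mu))-F_k(\Sc_0(\nu)))^*\bigr)}{(1+|k|^2)^\lambda}\,dk.
\]
Differentiating once more in $x$ the only $x$-dependence sits in $f_k(x-m(\mu))$, whose $x$-derivative is $-ik f_k(x-m(\mu))$, producing the $kk^\top$ factor and the announced formula for $D_{x\mu}C$. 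Summing the three contributions gives the stated expressions for $D_\mu\rho_F^2$ and $D_{x\mu}\rho_F^2$.

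For the partial Hessian $\Hc_{\mu,\nu}\rho_F^2$, the key observation is translation invariance: the push-forwards $(I_d+x_1)_\sharp\mu$ and $(I_d+x_2)_\sharp\nu$ shift the means to $m(\mu)+x_1$ and $m(\nu)+x_2$ respectively, while leaving $V(\cdot)$ and $\Sc_0(\cdot)$ unchanged. Therefore
\[
\rho_F^2\bigl((I_d+x_1)_\sharp\mu,(I_d+x_2)_\sharp\nu\bigr)=|m(\mu)-m(\nu)+x_1-x_2|^2+B(\mu,\nu)+C(\mu,\nu),
\]
and only the first summand depends on $(x_1,x_2)$. A direct computation of its Hessian at $x_1=x_2=0$ yields $2\begin{pmatrix}I_d&-I_d\\-I_d&I_d\end{pmatrix}$. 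Finally, continuity of $D_\mu\rho_F^2$ and $D_{x\mu}\rho_F^2$ in $\mu$ follows from Lemma \ref{lem:equivalence} (equivalence of the topologies), the continuity of $\mu\mapsto(m(\mu),V(\mu),\Sc_0(\mu))$ under any of them, and dominated convergence applied to the $k$-integrals with the integrable envelope $4(1+|k|^2)^{-\lambda}|k|$ coming from $|F_k|\leq 1$ and $\lambda=d+7>\tfrac{d}{2}+1$. The main care point is the dominated-convergence step for $D_\mu C$ and $D_{x\mu}C$, which is routine given the Fourier weight but needs to be stated carefully.
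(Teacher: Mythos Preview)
Your proposal is correct and follows essentially the same route as the paper: decompose $\rho_F^2$ into its mean, covariance, and Fourier pieces, invoke Lemma~\ref{lem:u0} (and the formula $D_\mu V^{i,j}$ stated just before the lemma) for the first two displayed formulas, use translation invariance of $\Sc_0$ and $V$ under shifts $(I_d+c)_\sharp$ to reduce the partial Hessian to the Hessian of $|m(\mu)-m(\nu)+x_1-x_2|^2$, and conclude continuity via dominated convergence. The paper's proof is considerably terser (it simply says the first two equations ``directly follow from Lemma~\ref{lem:u0}''), but your elaboration matches its content; the only cosmetic point is that for $D_{x\mu}C$ the dominating envelope should carry a factor $|k|^2$ rather than $|k|$, which is of course still integrable for $\lambda=d+7$.
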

\begin{proof}
The first two equations directly follow from Lemma~\ref{lem:u0}. The last is due to the fact that for $c \in \R$
\begin{align*}
      \rho_F^2((I_d+c)_{\sharp}\mu ,\nu)-\rho_F^2(\mu ,\nu)=\rho_F^2(\mu, (I_d-c)_{\sharp}\nu)-\rho_F^2(\mu ,\nu)= |\bar \mu -\bar \nu +c|^2-|\bar \mu -\bar \nu |^2,
    \end{align*}
    and 
    \begin{align*}
    D_{\mu} \rho_F^2(\mu,\nu, [\mu])=\frac{d}{dc} \rho_F^2((I_d+c)_{\sharp}\mu,\nu) |_{c=0}, \\
    \Hc_{\mu} \rho_F^2(\mu,\nu)=\frac{d^2}{dc^2} \rho_F^2((I_d+c)_{\sharp}\mu,\nu) |_{c=0}. 
    \end{align*}

    Since $ \rho_F, W_2$ induce the same topology on $\Pc_2(\R^d)$, we prove the continuity of $\mu \mapsto (D_{\mu} \rho_F^2(\mu,\nu,x), D_{x \mu } \rho_F^2(\mu,\nu,x))$ with respect to the $W_2$ metric. Note that $\mu \mapsto (m(\mu), V(\mu), \Sc_0(\mu))$ is clearly continuous and the Fourier transforms and basis $F_k(\nu), f_k(x)$ are uniformly bounded for all $\nu \in \Pc_2(\R^d), k,x \in \R^d$. Our claim simply follows from the continuity of $\mu \mapsto F_k(\mu)$ and the dominated convergence theorem. 
\end{proof}

In the proof of the main theorems, we will use the following auxiliary function to reduce functions on Wasserstein space to the ones on $\R^d$. For $\mu,\eta,\nu \in \Pc_2(\R^d)$, denote
\begin{align*}
\Lc(\mu,\eta,\nu):=&{2}Tr(V^\top(\Sc_0(\mu))(V(\Sc_0(\eta))-V(\Sc_0(\nu)))) \\
&+{2}\int \frac{Re(F_k(\Sc_0(\mu))(F_k(\Sc_0(\eta))-F_k(\Sc_0(\nu)))^*)}{(1+|k|^2)^\lambda}dk. 
\end{align*}
Here are some properties of $\Lc$ whose proof is almost the same as in Lemma~\ref{lem:propsliced} and thus omitted. 

\begin{lemma}\label{lem:L}
    Let $\mu,\eta,\nu \in \Pc_2(\R^d)$. Then $\mu \mapsto \Lc(\mu,\eta,\nu)$ is twice-differentiable, and 
    \begin{align*}
        D_{\mu}\Lc(\mu,\eta,\nu,x)=&{2}\sum_{i,j}(V^{i,j}( \Sc_0(\eta))-V^{i,j}(\Sc_0(\nu)))\big((x^j-m^j( \mu)) e_i+(x^i-m^i( \mu)) e_j\big) \\
        &+{2}\int \frac{Re(i k (F_k(\Sc_0(\mu))-f_k(x-m( \mu)))(F_k( \Sc_0(\eta))-F_k(\Sc_0(\nu)))^*)}{(1+|k|^2)^\lambda}dk,\\
  D_{x \mu }\Lc(\mu,\eta, \nu,x) =&{2}\sum_{i,j} (V^{i,j}(\Sc_0(\eta))-V^{i,j}(\Sc_0(\nu)))\big( e_ie_j^\top+ e_je_i^\top\big) \\
    &-{2}\int\frac{kk^\top Re\left(f_k(x-m(\mu))(F_k(\Sc_0(\eta))-F_k(\Sc_0(\nu)))^*\right)}{(1+|k|^2)^\lambda}dk.
    \end{align*}
Furthermore, $\mu \mapsto (D_{\mu} \Lc(\mu,\eta\,\nu,x), D_{x \mu }\Lc(\mu,\eta,\nu,x))$ is continuous w.r.t. any of the equivalent topology considered. 
\end{lemma}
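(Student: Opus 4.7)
The plan is to exploit the affine dependence of $\Lc$ on its first argument. Setting $A:=V(\Sc_0(\eta))-V(\Sc_0(\nu))\in\Sc_d$ and $B_k:=F_k(\Sc_0(\eta))-F_k(\Sc_0(\nu))\in\C$, which are independent of $\mu$, we may write
\begin{align*}
\Lc(\mu,\eta,\nu)=2\,Tr\bigl(V^\top(\Sc_0(\mu))A\bigr)+2\int\frac{Re\bigl(F_k(\Sc_0(\mu))B_k^*\bigr)}{(1+|k|^2)^\lambda}\,dk,
\end{align*}
so that computing $D_\mu\Lc$ reduces to differentiating the two building blocks $\mu\mapsto V(\Sc_0(\mu))$ and $\mu\mapsto F_k(\Sc_0(\mu))$ and then pairing the outputs with $A$ and $B_k^*$, mirroring exactly the argument given for $\rho_F^2$ in Lemma~\ref{lem:propsliced}.

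For the variance term, translation invariance gives $V(\Sc_0(\mu))=V(\mu)$, so the formula $D_\mu V^{i,j}(\mu,x)=(x^j-m^j(\mu))e_i+(x^i-m^i(\mu))e_j$ recalled just before Lemma~\ref{lem:propsliced} applies verbatim; multiplying by $2A^{i,j}$ and summing over $i,j$ yields the variance line of $D_\mu\Lc$, while differentiating once more in $x$ removes the affine shifts and produces $e_ie_j^\top+e_je_i^\top$, which is the variance line of $D_{x\mu}\Lc$. For the Fourier term, Lemma~\ref{lem:u0} gives $D_\mu F_k(\Sc_0(\mu))(x)=ik\bigl(F_k(\Sc_0(\mu))-f_k(x-m(\mu))\bigr)$, and multiplication by $2B_k^*/(1+|k|^2)^\lambda$ followed by taking real parts and integrating in $k$ yields the Fourier line of $D_\mu\Lc$; differentiating further in $x$ and using $D_xf_k(x-m(\mu))=-ikf_k(x-m(\mu))$ produces the factor $-kk^\top f_k(x-m(\mu))$ inside the real part, which is the Fourier line of $D_{x\mu}\Lc$. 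The exchange of these differentiations with the $dk$-integral is legitimized by the pointwise bound $|F_k(\cdot)|,|f_k(\cdot)|\leq(2\pi)^{-d/2}$ together with the weight $(1+|k|^2)^{-\lambda}$, $\lambda=d+7$, which absorbs the extra factors $|k|$ or $|k|^2$ with ample room.

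Continuity of $\mu\mapsto\bigl(D_\mu\Lc(\mu,\eta,\nu,x),D_{x\mu}\Lc(\mu,\eta,\nu,x)\bigr)$ in any of the equivalent topologies is then handled exactly as at the end of the proof of Lemma~\ref{lem:propsliced}: $\mu\mapsto(m(\mu),V(\mu),F_k(\Sc_0(\mu)))$ is continuous for each fixed $k$, the pointwise integrands in the Fourier lines are dominated uniformly in $\mu$ by constant multiples of $(1+|k|^2)^{1-\lambda}$ and $(1+|k|^2)^{2-\lambda}$ respectively, and both dominants are integrable for $\lambda=d+7$. Dominated convergence concludes. No genuine obstacle is expected here; the only delicate step is producing the uniform integrable envelope required to differentiate under the integral and to pass to the limit, which the polynomial decay of the Fourier weight supplies with generous margin.
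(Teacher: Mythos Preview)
Your proposal is correct and follows precisely the route the paper indicates: the paper omits the proof entirely, stating it is ``almost the same as in Lemma~\ref{lem:propsliced},'' and you have faithfully carried out that computation by linearizing $\Lc$ in the building blocks $V(\Sc_0(\mu))$ and $F_k(\Sc_0(\mu))$, invoking Lemma~\ref{lem:u0} and the derivative formula for $V^{i,j}$, and closing with the same dominated-convergence argument for continuity.
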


\subsection{Variational principle}
Thanks to \cite[Lemma 2]{auricchio2020equivalence} and \cite[Lemma 4.4]{cosso2021master}, 
$\rho_\sigma$ is a gauge-type function on the complete metric space $(\Pc_2(\R^d),W^{(\sigma)}_2)$. 
Recall the definition of $d_\sigma$ in \eqref{eq:defdsigma}.
Thanks to \cite[Lemma 4.3 and Lemma 4.4]{cosso2021master}, 
the mapping 
$$((\theta_1,\tilde \theta_1),(\theta_2,\tilde \theta_2))\in (\Theta^2)^2\mapsto d_\sigma^2(\theta_1,\theta_2)+d_\sigma^2(\tilde \theta_1,\tilde \theta_2)$$
is a gauge-type function on the product space $\Theta^2$ where $\Pc_2(\R^d)$ is endowed with the metric $W^{(\sigma)}_2$. Being topologically equivalent to $W_2$ and $\rho_F$, these metric admit the same set of semi-continuous functions. Thus, we have the following version of the Borwein-Preiss variational principle whose proof is the same as in \cite[Theorem 4.5]{cosso2021master} up to the addition of the $y$ variable and the doubling of all variables. The upper bound of the derivatives can be easily derived from \cite[(4.6),(4.7)]{cosso2021master} and Lemma~\ref{lem:u0}.
\begin{lemma}\label{lem:var}
Fix $\kappa>0$ and let $G:\Theta^2\mapsto \R$ be an upper semicontinuous function. Assume that  
$$\sup_{(\theta, \tilde \theta)\in \Theta^2}G(\theta, \tilde \theta)<\infty.$$
and let $(\theta^0, \tilde \theta^0)\in \Theta^2$ so that 
$$\sup_{(\theta, \tilde \theta)\in \Theta^2}G(\theta, \tilde \theta)-\kappa<G(\theta^0, \tilde \theta^0).$$
Then, for every $\delta>0$, there exists $(\theta^*, \tilde \theta^*)$ and 
$\{(\theta^j, \tilde \theta^j):j\geq 1\}$ such that denoting 
$$\phi_\delta(\theta):=\sum_{j=0}^\infty\frac{d_\sigma^2(\theta,\theta^j)}{2^j}\mbox{ and }\tilde \phi_\delta(\tilde \theta):=\sum_{j=0}^\infty\frac{d_\sigma^2(\tilde \theta,\tilde \theta^j)}{2^j}$$
we have
\begin{itemize}
    \item $d_\sigma^2(\theta^*,\theta^j)+d_\sigma^2(\tilde \theta^*,\tilde \theta^j)\leq \frac{\kappa}{\delta 2^j}$ for $j\geq 0$;
    \item$
        G(\theta^0, \tilde \theta^0)\leq G(\theta^*, \tilde \theta^*)- \delta(\phi_\delta(\theta^*)+\tilde \phi_\delta(\tilde \theta^*))$;
    \item $(\theta, \tilde \theta)\in \Theta^2\mapsto G(\theta, \tilde \theta)- \delta  (\phi_\delta(\theta)+\tilde \phi_\delta(\tilde \theta))$ admits a strict global maximum at $(\theta^*, \tilde \theta^*).$
\end{itemize}
Additionally $d_\sigma$ satisfies the following derivative bounds 
\begin{align*}
|D_\mu d_\sigma^2(\theta,\tilde \theta,x)|&\leq C_d\left(\sigma +\sigma^{-1}\left(|x|^2+|m(\mu)|^2+\int |x|^2 \, \Sc_0(\mu)(dx)\right)\right)+2|m(\mu)-m(\tilde \mu)|,\\
    |D_{x\mu} d_\sigma^2(\theta,\tilde \theta,x)|&\leq C_d(1 +\sigma^{-2}(|x|^2+|m(\mu)|^2)),
\end{align*}
for a constant $C_d$ that only depends on the dimension $d$. 

\end{lemma}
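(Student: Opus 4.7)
The plan is to deduce this lemma from the classical Borwein--Preiss variational principle applied on the product space $\Theta^2$, together with an explicit computation for the derivative bounds. First I would verify the prerequisites: equip $\Pc_2(\R^d)$ with $W_2^{(\sigma)}$, under which it is complete by \cite[Lemma 4.4]{cosso2021master}, and endow $\Theta^2$ with the natural product structure. The function $((\theta_1,\tilde\theta_1),(\theta_2,\tilde\theta_2))\mapsto d_\sigma^2(\theta_1,\theta_2)+d_\sigma^2(\tilde\theta_1,\tilde\theta_2)$ is then a gauge-type function on the complete space $\Theta^2$: nonnegativity and the ``Cauchy-implies-convergence'' property of gauges both descend from the corresponding statements for $d_\sigma$, which in turn come from \cite[Lemma 4.3]{cosso2021master} and the completeness of the Euclidean factors. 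Since upper semicontinuity of $G$ depends only on topology and not on the specific choice of metric, it is preserved when switching from $W_2$ to $W_2^{(\sigma)}$ in view of Lemma~\ref{lem:equivalence}.

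With the gauge-type setting in place, I would then run the standard Borwein--Preiss iteration exactly as in \cite[Theorem 4.5]{cosso2021master}. Starting from $(\theta^0,\tilde\theta^0)$, inductively select $(\theta^n,\tilde\theta^n)\in \Theta^2$ to be an approximate maximizer, within error $\kappa\, 2^{-n}$, of the partially perturbed functional $G - \delta\sum_{j<n}2^{-j}\bigl(d_\sigma^2(\cdot,\theta^j)+d_\sigma^2(\cdot,\tilde\theta^j)\bigr)$. The geometric weights force $\{(\theta^n,\tilde\theta^n)\}$ to be Cauchy in $d_\sigma$, so completeness provides a limit $(\theta^*,\tilde\theta^*)$; the strict-maximum property, the distance bounds $d_\sigma^2(\theta^*,\theta^j)+d_\sigma^2(\tilde\theta^*,\tilde\theta^j)\leq \kappa/(\delta 2^j)$, and the functional inequality follow from the same telescoping argument as in the cited reference, the only change being the routine doubling of variables.

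For the derivative bounds on $d_\sigma^2$, I would differentiate $\rho_\sigma^2$ summand by summand. The mean component $|m(\mu)-m(\tilde\mu)|^2$ contributes the additive $2|m(\mu)-m(\tilde\mu)|$ piece verbatim. For the centered sum, each term has the form $\sqrt{|a_{n,l,B}(\Sc_0(\mu))-a_{n,l,B}(\Sc_0(\tilde\mu))|^2+\delta_{n,l}^2}-\delta_{n,l}$, where $a_{n,l,B}(\eta):=(\eta*N_\sigma)((2^nB)\cap B_n)$ is a smooth linear functional (the Gaussian convolution providing the smoothness). Applying Lemma~\ref{lem:u0} to pass the derivative through $\Sc_0$, the $x$-derivatives land on the density of $N_\sigma$, which scales like $\sigma^{-1}$ for the gradient and $\sigma^{-2}$ for the Hessian, producing exactly the advertised $\sigma$-dependence. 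The weights $2^{2n}2^{-2l}$ combined with the geometric choice $\delta_{n,l}=2^{-(4n+2dl)}$ make both double series converge uniformly on quadratically bounded sets, yielding the stated inequalities after matching constants.

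The main potential obstacle is keeping the quadratic-growth quantification clean: the dependence on $|x|^2 + |m(\mu)|^2 + \int |x|^2 \Sc_0(\mu)(dx)$ arises precisely because composing with $\Sc_0$ shifts the argument of the Gaussian convolution by $m(\mu)$, and one must verify that the chain-rule formula of Lemma~\ref{lem:u0} delivers the centered form which, after a Cauchy--Schwarz bound against the second moment of $\Sc_0(\mu)$, absorbs these quadratic contributions. Beyond this bookkeeping, the argument is a direct transcription of the calculations underlying (4.6)--(4.7) in \cite{cosso2021master} to our slightly enlarged state space.
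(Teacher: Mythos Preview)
Your proposal is correct and follows essentially the same approach as the paper: reduce to the Borwein--Preiss variational principle on the product space $\Theta^2$ equipped with the $W_2^{(\sigma)}$-based gauge (citing \cite[Theorem 4.5]{cosso2021master} with the routine addition of the $y$-variable and doubling), and obtain the derivative bounds from \cite[(4.6),(4.7)]{cosso2021master} combined with Lemma~\ref{lem:u0}. The only minor discrepancy is a citation: completeness of $(\Pc_2(\R^d),W_2^{(\sigma)})$ is \cite[Lemma 4.2]{cosso2021master}, not Lemma 4.4.
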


\section{Proof of Ishii's Lemma}\label{s.proofishii}

The proof is divided into several steps. Let us briefly sketch the idea. In Step 1, making use of the product space structure $\mathcal{P}_2(\R^d)=\R^d \times \mathcal{P}_{2,0}(\R^d)$, we reduce sub/super solutions $u$ and $v$ to finite dimensional auxiliary functions. In Step 2, we invoke the finite dimensional Ishii's Lemma to get second-order finite dimensional jets of auxiliary functions. Then we can go all the way to obtain all the jets of $u$ and $v$. Here a tricky point is that when defining auxiliary functions, we take supreme/infimum among all probability measures with fixed barycenters. However, due to the non-compactness of Wasserstein space, the supreme and infimum cannot be obtained, and thus we invoke the variational principle in Step 3, and derive a limit in Step 4. 

\begin{proof}[Proof of Theorem~\ref{ishii}]
 {\it Step 1: Reducing to finite dimensional Ishii's lemma.}
Define the functions
        \begin{align*}
         u_1(t,y,\mu)&:= u(t,y,\mu) -\frac{ \a}{2}\Psi(\mu), \\
         v_1(t, y,\mu)&:=v(t, y,\mu)+\frac{\a}{2}\tilde \Psi(\mu).
    \end{align*}
    Given any Hilbert space $(L,\langle \cdot \rangle )$ and $a,b,c,d \in L$, it is straightforward that 
    \begin{align*}
    |a-b|^2+|c-d|^2-2\langle a-b,c-d\rangle = |a-b-(c-d)|^2 \leq 2 (|a-c|^2+|b-d|^2).
    \end{align*}
    Therefore we have 
    \begin{align}
       &|F_k(\Sc_0(\mu))-F_k(\Sc_0(\tilde \mu))|^2+|F_k(\Sc_0(\mu^*))-F_k(\Sc_0(\tilde \mu^*))|^2\notag\\
       &\quad\quad\leq2(|F_k(\Sc_0(\mu))-F_k(\Sc_0(\mu^*))|^2+|F_k(\Sc_0(\tilde \mu))-F_k(\Sc_0(\tilde \mu^*))|^2)\notag\\
       &\quad\quad+ 2Re((F_k(\Sc_0(\mu))-F_k(\Sc_0(\tilde \mu)))(F_k(\Sc_0(\mu^*))-F_k(\Sc_0(\tilde \mu^*))^*))\label{ineq:1}
       \end{align}
       and 
       \begin{align}
       &Tr((M-N)^\top(M-N))+ Tr((M^*-N^*)^\top(M^*-N^*))\notag\\
       &\quad\quad\leq 2( Tr((M-M^*)^\top(M-M^*))+Tr((N-N^*)^\top(N-N^*)))\notag\\
        &\quad\quad+ 2Tr((M-N)^\top(M^*-N^*)))\label{ineq:2}
    \end{align}
    valid for any symmetric matrices $M=V(\mu),N=V(\tilde \mu),M^*=V(\mu^*),N^*=V(\tilde \mu^*)$. Hence it holds that 
\begin{align*}
    &2\rho^2_F(\Sc_0(\mu),\Sc_0(\mu^*))+2\rho^2_F(\Sc_0(\tilde \mu),\Sc_0(\tilde \mu^*))+\Lc(\Sc_0(\mu),\Sc_0(\mu^*),\Sc_0(\tilde \mu^*))-\Lc(\Sc_0(\tilde \mu),\Sc_0(\mu^*),\Sc_0(\tilde \mu^*))\\
    &\geq \rho^2_F(\Sc_0(\mu),\Sc_0(\tilde \mu))+\rho^2_F(\Sc_0(\mu^*),\Sc_0(\tilde \mu^*)).
\end{align*}
Thus, we have the inequality, 
\begin{align*}
     & u_1(t,y,\mu)-v_1(\tilde t,\tilde y,\tilde \mu)-\frac{\a}{2}\left(|y-\tilde y|^2+|m(\mu)- m(\tilde\mu)|^2+|t-\tilde t|^2\right)\\
     &= u(t,y,\mu)-v(\tilde t,\tilde y,\tilde \mu)-\frac{\a}{2}\left(|y-\tilde y|^2+|m(\mu)- m(\tilde\mu)|^2+|t-\tilde t|^2\right)\\
     &-\frac{ \a}{2}\left(2\rho_F^2(\Sc_0(\mu),\Sc_0(\mu^*))+2\rho^2_F(\Sc_0(\tilde \mu),\Sc_0(\tilde \mu^*))\right)\\
     &-\frac{ \a}{2}\left(\Lc(\Sc_0(\mu),\Sc_0(\mu^*),\Sc_0(\tilde \mu^*))-\Lc(\Sc_0(\tilde \mu),\Sc_0(\mu^*),\Sc_0(\tilde \mu^*))\right)\\
     &\leq u(t,y,\mu)-v(\tilde t,\tilde y,\tilde \mu)-\frac{\a}{2}\left(|y-\tilde y|^2+|m(\mu)- m(\tilde\mu)|^2+|t-\tilde t|^2\right)\\
     &-\frac{\a}{2}(\rho^2_F(\Sc_0(\mu),\Sc_0(\tilde \mu))+\rho^2_F(\Sc_0(\mu^*),\Sc_0(\tilde \mu^*)))\\
     &=u(\theta)-v(\tilde \theta)-\frac{\a}{2} d_F^2(\theta,\tilde \theta)-\frac{\a}{2}\rho^2_F(\Sc_0(\mu^*),\Sc_0(\tilde \mu^*)).
\end{align*}
Additionally, for $(\mu,\tilde \mu,M,N)=(\mu^*,\tilde \mu^*,M^*,N^*)$ the inequalities \eqref{ineq:1}-\eqref{ineq:2} are equalities. 
Thus, the function
\begin{align}\label{eq:optu1v1}
u_1(t,y,\mu)-v_1(\tilde t,\tilde y,\tilde \mu)-\frac{\a}{2}\left(|y-\tilde y|^2+|m(\mu)- m(\tilde\mu)|^2+|t-\tilde t|^2\right)
\end{align}
admits the same strict global maximum at $((t^*,y^*,\mu^*),(\tilde t^*,\tilde y^*,\tilde \mu^*))$ as $$u(\theta)-v(\tilde \theta)-\frac{\a}{2} d_F^2(\theta,\tilde \theta) .$$
Define the mappings 
\begin{align}
    (t,y,z)\in [0,T]\times (\R^d)^2\mapsto  u_2(t,y,z)=\sup_{\mu\in \Pc_{2,0}(\R^d)} u_1(t,y,(I_d+z)\sharp\mu)\label{eq:defu2}\\
    (t,y,z)\in [0,T]\times (\R^d)^2\mapsto v_2(t,y,z)=\inf_{\mu\in \Pc_{2,0}(\R^d)} v_1(t,y,(I_d+z)\sharp\mu)\label{eq:defv2}
\end{align}
where we recall that $\Pc_{2,0}(\R^d)\subset \Pc_{2}(\R^d)$ is the set of measures with $0$ mean and finite variance. By the maximality of $ u_1(t,y,\mu)-v_1(\tilde t,\tilde y,\tilde \mu)-\frac{\a}{2}\left(|y-\tilde y|^2+| m(\mu)- m(\tilde\mu)|^2+|t-\tilde t|^2\right)$ at $((t^*,y^*,\mu^*),(\tilde t^*,\tilde y^*,\tilde \mu^*))$, the function 
\begin{align}\label{eq:optfinite}
      u_2(t,y,z)-v_2(\tilde t,\tilde y,\tilde z)-\frac{\a}{2}\left(|y-\tilde y|^2+|z-\tilde z|^2+|t-\tilde t|^2\right)
\end{align}
 admits a global maximum at $((t^*,y^*,m(\mu^*)),(\tilde t^*,\tilde y^*,m( {\tilde {\mu}}^*)))$. 
Denoting $u_2^*$ the upper semicontinuous envelope of $u_2$ and $v_{2,*}$ the lower semicontinuous envelope of $v_2$, this maximality implies that 
\begin{align}\label{eq:optfinitestar}
      u_2^*(t,y,z)-v_{2,*}(\tilde t,\tilde y,\tilde z)-\frac{\a}{2}\left(|y-\tilde y|^2+|z-\tilde z|^2+|t-\tilde t|^2\right)
\end{align}
 admits a global maximum at $(((t^*,y^*,m(\mu^*)),(\tilde t^*,\tilde y^*,m( {\tilde {\mu}}^*)))$ and 
 $$u_2^*(t^*,y^*,m(\mu^*))-v_{2,*}(\tilde t^*,\tilde y^*,m( {\tilde {\mu}}^*))= u_2(t^*,y^*,m(\mu^*))-v_2(\tilde t^*,\tilde y^*,m( {\tilde {\mu}}^*)).$$


 \vspace{4pt}
 
 {\it Step 2: Applying finite dimensional Ishii's lemma.}
 By Ishii's Lemma in \cite[Theorem 3.2]{usersguide} applied to \eqref{eq:optfinitestar}, for $\e>0$,
there exists a sequence indexed by $n$
$$(t^*_n,\tilde t^*_n,y^*_n,\tilde y^*_n,z^*_n,\tilde z^*_n,q_n^*,\tilde q_n^*,p_n^*,\tilde p_n^*,X_n^*,\tilde X_n^*)\in (\R)^2\times (\R^d)^4\times \R^2\times (\R^{2d})^2\times (\Sc_{2d})^2$$ 
so that 
\begin{align}\label{maxu2v2}
    &u_2^*(t,y,z)-v_{2,*}(\tilde t,\tilde y,\tilde z)-q_n^*(t-t^*_n)-\tilde q_n^*(\tilde t-\tilde t^*_n)-(p^*_n)^\top \begin{pmatrix}y-y^*_n\\z-z^*_n\end{pmatrix}
   \\
    & -(\tilde p^*_n)^\top\begin{pmatrix}\tilde y-\tilde y^*_n\\\tilde z-\tilde z^*_n\end{pmatrix}-\frac{1}{2}\begin{pmatrix}
        y-y^*_n\\z-z^*_n
    \end{pmatrix}^\top X^*_n\begin{pmatrix}
        y-y^*_n\\z-z^*_n
    \end{pmatrix}
        -\frac{1}{2}\begin{pmatrix}
         \tilde y-\tilde y^*_n\\\tilde z-\tilde z^*_n
    \end{pmatrix}^\top \tilde X^*_n\begin{pmatrix}
    \tilde y-\tilde y^*_n\\\tilde z-\tilde z^*_n
    \end{pmatrix}\notag
\end{align}
admits a local maximum at $((t^*_n,y^*_n,z^*_n),(\tilde t^*_n,\tilde y^*_n,\tilde z^*_n))$, and we have the following bounds and limits as $n\to \infty$,
\begin{align}\label{eq:tstarn}
    ((t^*_n,y^*_n,z^*_n),(\tilde t^*_n,\tilde y^*_n,\tilde z^*_n)) &\to\left(((t^*,y^*,m(\mu^*)),(\tilde t^*,\tilde y^*,m(\tilde \mu^*))\right),\\
    (u_2^*(t^*_n,y^*_n,z^*_n),v_{2,*}(\tilde t^*_n,\tilde y^*_n,\tilde z^*_n))&\to (u_2^*(t^*,y^*,m(\mu^*)),v_{2,*}(\tilde t^*,\tilde y^*,m(\tilde \mu^*))),\notag\\
    &=(u_2(t^*,y^*,m(\mu^*)),v_2(\tilde t^*,\tilde y^*,m( {\tilde {\mu}}^*)))\notag\\
    (q_n^*,q_n^*+\tilde q_n^*,p_n^*,p_n^*+\tilde p_n^*,X^*_n,\tilde X^*_n) &\to\left(\a(t^*-\tilde t^*),0,\a\begin{pmatrix}
    y^*-\tilde y^*\\ m( \mu^*- {\tilde {\mu}}^*)
\end{pmatrix},0,X^*,\tilde X^*\right), \notag\\
\label{eq:orderXn}
    -\left(\frac{1}{\e}+2\a\right)I_{4d}\leq 
&\begin{pmatrix}
    X^*&0\\0&\tilde X^*
\end{pmatrix}
\leq 
(\a+2\e\a^2)
\begin{pmatrix}
    I_{2d}&-I_{2d}\\-I_{2d}&I_{2d}
\end{pmatrix}.
\end{align}

\vspace{4pt}

{\it Step 3: Application of the variational principle.}
 Note that the set $\{(t_n^*,y^*_n,z^*_n):n\in \N\}\cup \{(\tilde t_n^*,\tilde y^*_n,\tilde z^*_n):n\in \N\}$ is bounded. Thus, taking  $$R>\sup_n\sqrt{|t_n^*|^2+|y^*_n|^2+|z^*_n|}+\sqrt{|
 \tilde t_n^*|^2+|\tilde y^*_n|^2+|\tilde z^*_n|}$$ large enough, thanks to the continuity \eqref{cond:unifc0} which is a uniform continuity on bounded sets, and the optimality \eqref{eq:optu1v1}, there exists a continuous modulus of continuity $\omega_R$ so that for all $(t_0,y_0,z_0,\tilde t_0,\tilde y_0,\tilde z_0)$ with $$\sqrt{|t_0|^2+|y_0|^2+|z_0|}+\sqrt{|\tilde t_0|^2+|\tilde y_0|^2+|\tilde z_0|}\leq R$$ we have
\begin{align*}
      & u_1(t_0,y_0,(I_d+z_0)\sharp\Sc_0(\mu^*))-v_1(\tilde t_0,\tilde y_0,(I_d+\tilde z_0)\sharp\Sc_0(\tilde \mu^*))\\
      &-\frac{\a}{2}\left(|y_0-\tilde y_0|^2+|z_0- \tilde z_0|^2+|t_0-\tilde t_0|^2\right)\\
      & \geq u_1(t^*,y^*,(I_d+m(\mu^*))\sharp\Sc_0(\mu^*))-v_1(\tilde t^*,\tilde y^*,(I_d+m(\tilde z^*))\sharp\Sc_0(\tilde \mu^*))\\
      &-\frac{\a}{2}\left(|y^*-\tilde y^*|^2+|m(\mu^*)- m(\tilde \mu^*)|^2+|t^*-\tilde t^*|^2\right)\\
      &-\omega_R(| t^*- t_0|+|y^*-y_0|+|m(\mu^*)- z_0|+|\tilde t^*-\tilde t_0|+|\tilde y^*-\tilde y_0|+|m(\tilde \mu^*)- \tilde z_0|)\\     
      &= \sup_{\theta,\tilde \theta\in \Theta} u_1(t,y,\mu)-v_1(\tilde t,\tilde y,\tilde \mu)-\frac{\a}{2}\left(|y-\tilde y|^2+|m(\mu)- m(\tilde \mu)|^2+|t-\tilde t|^2\right)\\
      &-\omega_R(| t^*- t_0|+|y^*-y_0|+|m(\mu^*)- z_0|+|\tilde t^*-\tilde t_0|+|\tilde y^*-\tilde y_0|+|m(\tilde \mu^*)- \tilde z_0|)\\     
      &\geq \sup_{\mu,\tilde \mu\in \Pc_{2,0}(\R^d)} u_1(t_0,y_0,(I_d+z_0)\sharp\mu)-v_1(\tilde t_0,\tilde y_0,(I_d+\tilde z_0)\sharp\tilde \mu)\\
      &-\frac{\a}{2}\left(|y_0-\tilde y_0|^2+|z_0- \tilde z_0|^2+|t_0-\tilde t_0|^2\right)\\
      &-\omega_R(| t^*- t_0|+|y^*-y_0|+|m(\mu^*)- z_0|+|\tilde t^*-\tilde t_0|+|\tilde y^*-\tilde y_0|+|m(\tilde \mu^*)- \tilde z_0|)     
\end{align*}
which by continuity implies that for all $(t_0,y_0,z_0,\tilde t_0,\tilde y_0,\tilde z_0)$ with $$\sqrt{|t_0|^2+|y_0|^2+|z_0|}+\sqrt{|\tilde t_0|^2+|\tilde y_0|^2+|\tilde z_0|}\leq R,$$ we have
\begin{align*}
      & u_1(t_0,y_0,(I_d+z_0)\sharp\Sc_0(\mu^*))-v_1(\tilde t_0,\tilde y_0,(I_d+\tilde z_0)\sharp\Sc_0(\tilde \mu^*))\\
      &+\omega_R(|y^*-y_0|+|m(\mu^*)- z_0|+|\tilde t^*-\tilde t_0|+|\tilde y^*-\tilde y_0|+|m(\tilde \mu^*)- \tilde z_0|+|\tilde t^*-\tilde t_0|)\\
      &\geq  u_2^*(t_0,y_0,z_0)-v_{2,*}(\tilde t_0,\tilde y_0,\tilde z_0).
\end{align*}
Thus, denoting 
\begin{align}\notag
(\theta^{0,n},\tilde \theta^{0,n})&:=((t^{0,n},y^{0,n},\mu^{0,n}),(\tilde t^{0,n},\tilde y^{0,n},\tilde \mu^{0,n}))\\
&:=((t^*_n,y^*_n,(I_d+z^*_n)\sharp\Sc_0(\mu^{*})),(\tilde t^*_n,\tilde y^*_n,(I_d+\tilde z^*_n)\sharp\Sc_0(\tilde\mu^{*}) )  )\label{eq:0ndef}\\
C_n&:=\omega_R(|y^*-y^*_n|+|m(\mu^*)- z^*_n|+|\tilde t^*-\tilde t^*_n|+|\tilde y^*-\tilde y^*_n|+|m(\tilde \mu^*)- \tilde z^*_n|+|\tilde t^*-\tilde t^*_n|)\notag
\end{align}
which goes to $0$ thanks to \eqref{eq:tstarn}, 
the function 
\begin{align*} 
G_n(\theta,\tilde \theta)&:= u_1(t,y,\mu )-v_1(\tilde t,\tilde y, \tilde \mu)-q_n^*(t-t^*_n)-\tilde q_n^*(\tilde t-\tilde t^*_n)
    \\
    &-(p^*_n)^\top \begin{pmatrix}y-y^*_n\\m( \mu)-z^*_n\end{pmatrix}
    -(\tilde p^*_n)^\top\begin{pmatrix}\tilde y-\tilde y^*_n\\m( {\tilde \mu)}-\tilde z^*_n\end{pmatrix}\notag\\
    &-\frac{1}{2}\begin{pmatrix}
        y-y^*_n\\m( \mu)-z^*_n
    \end{pmatrix}^\top X^*_n\begin{pmatrix}
        y-y^*_n\\m( \mu)-z^*_n
    \end{pmatrix}
        -\frac{1}{2}\begin{pmatrix}
         \tilde y-\tilde y^*_n\\m({\tilde \mu})-\tilde z^*_n
    \end{pmatrix}^\top \tilde X^*_n\begin{pmatrix}
        \tilde y-\tilde y^*_n\\m( {\tilde \mu})-\tilde z^*_n
    \end{pmatrix}
\end{align*}
satisfies
\begin{align*} 
G_n(\theta^{0,n},\tilde \theta^{0,n})&= u_1(t^*_n,y^*_n,\mu^{0,n})-v_1(\tilde t^*_n,\tilde y^*_n,\tilde \mu^{0,n})\\
&\geq u_2^*(t^*_n,y^*_n,z^*_n)-v_{2*}(\tilde t^*_n,\tilde y^*_n,\tilde z^*_n)-C_n.
\end{align*}
Thus, thanks to the maximality condition \eqref{maxu2v2} and the definitions of $u_2-v_2$, we have
\begin{align*} 
G_n(\theta^{0,n},\tilde \theta^{0,n})&= u_1(t^*_n,y^*_n,\mu^{0,n})-v_1(\tilde t^*_n,\tilde y^*_n,\tilde \mu^{0,n})\\
&\ge\sup_{\theta,\tilde \theta\in \Theta}G_n(\theta,\tilde \theta)-C_n,
\end{align*}
and we can apply the variational principle in Lemma \ref{lem:var} to get the existence of
$$(\theta^{j,n},\tilde \theta^{j,n}):=((t^{j,n},y^{j,n}\mu^{j,n}),(\tilde t^{j,n},\tilde y^{j,n},\tilde \mu^{j,n}))\mbox{ for }j\geq1$$ and $(\theta^{*,n},\tilde \theta^{*,n}):=((t^{*,n},y^{*,n}\mu^{*,n}),(\tilde t^{*,n},\tilde y^{*,n},\tilde \mu^{*,n}))$
so that  
\begin{align}\label{eq:starn1}
    &d^2_{\sigma}(\theta^{*,n},\theta^{j,n})+d^2_{\sigma}(\tilde \theta^{*,n},\tilde \theta^{j,n})\leq  \frac{2\sqrt {C_n}}{ 2^j}\mbox{ for }j\geq 0\\
    &G_n(\theta^{0,n},\tilde \theta^{0,n})\leq G_n(\theta^{*,n},\tilde \theta^{*,n})-\sqrt {C_n} ( \phi_{2,n}(\theta^{*,n})+\tilde \phi_{2,n}(\tilde\theta^{*,n}))\mbox{ and}\notag\\
    &\label{eq:starn3}
  (\theta,\tilde \theta)\mapsto G_n(\theta,\tilde \theta)-\sqrt {C_n} ( \phi_{2,n}(\theta)+\tilde \phi_{2,n}(\tilde\theta))
\end{align}
admits a strict local maximum at $(\theta^{*,n},\tilde \theta^{*,n})$ with 
\begin{align*}
    \phi_{2,n}(\theta)&:=\sum_{j\geq 0}\frac{d_{\sigma}^2(\theta,\theta^{j,n})}{2^j},\\
    \tilde \phi_{2,n}(\theta)&:=\sum_{j\geq 0}\frac{d_{\sigma}^2(\theta,\tilde \theta^{j,n})}{2^j}.
\end{align*}
Denoting
\begin{align*}
    \Phi_n(\theta):=&\frac{\a\Psi(\mu)}{2}+\sqrt {C_n} \phi_{2,n} (\theta)\\
    &+q_n^*(t-t^*_n)+(p^*_n)^\top \begin{pmatrix}y-y^*_n\\ m( \mu)-z^*_n\end{pmatrix}
   +\frac{1}{2}\begin{pmatrix}
        y-y^*_n\\ m( \mu)-z^*_n
    \end{pmatrix}^\top X^*_n\begin{pmatrix}
        y-y^*_n\\ m( \mu)-z^*_n
    \end{pmatrix}
       \\
        \tilde \Phi_n(\theta):=&\frac{\a\tilde \Psi(\mu)}{2}+\sqrt {C_n} \tilde \phi_{2,n} ( \theta)\\
    &+\tilde q_n^*( t-\tilde t^*_n)
   +(\tilde p^*_n)^\top\begin{pmatrix} y-\tilde y^*_n\\  m( \mu)-\tilde z^*_n\end{pmatrix}+\frac{1}{2}\begin{pmatrix}
         y-\tilde y^*_n\\  m( \mu)-\tilde z^*_n
    \end{pmatrix}^\top \tilde X^*_n\begin{pmatrix}
        y-\tilde y^*_n\\  m( \mu)-\tilde z^*_n
    \end{pmatrix}
\end{align*}
we obtain that
$$u(\theta)-v(\tilde \theta)-  \Phi_n(\theta)- \tilde \Phi_n(\tilde \theta)$$
admits a strict maximum at $(\theta^{*,n},\tilde \theta^{*,n})=((t^{*,n},y^{*,n},\mu^{*,n}),(\tilde t^{*,n},\tilde y^{*,n},\tilde \mu^{*,n}))$.

\vspace{4pt}
{\it Step 4: Taking the limit in $n$.}
Thanks to \eqref{eq:0ndef} and \eqref{eq:tstarn}, we have
\begin{align*}
d_F(\theta^{0,n},\theta^*)&=d_{\sigma}(\theta^{0,n},\theta^*)=\sqrt{|t^*-t^*_n|^2+|y^*-y^*_n|^2+|z^*_n-m(\mu^*)|^2} \to 0\\
d_F(\tilde \theta^{0,n},\tilde \theta^*)&=d_{\sigma}(\tilde \theta^{0,n},\tilde \theta^*)=\sqrt{|\tilde t^*-\tilde t^*_n|^2+|\tilde y^*-\tilde y^*_n|^2+|\tilde z^*_n-m(\tilde \mu^*)|^2} \to 0.
\end{align*} 
This limit combined with \eqref{eq:starn1} for $j=0$ implies that 
\begin{align}\label{eq:cvnstar}
    d_{\sigma}(\theta^{*,n},\theta^*)+d_{\sigma}(\tilde \theta^{*,n},\tilde \theta^*)\to 0.
\end{align}
Due to \cite[Lemma 4.2]{cosso2021master}, \eqref{eq:cvnstar} implies that $(\mu^{*,n},\tilde \mu^{*,n})\to (\mu^{*},\tilde \mu^{*}) $ in the Wasserstein-$2$ distance. \cite[Theorem 6.9]{villani2009optimal} means that the first two moments of $(\mu^{*,n},\tilde \mu^{*,n})$ converge to the first two moments of $(\mu^{*},\tilde \mu^{*})$. As such, these second moments are bounded. 

By direct computation, 
and the definition of $d_\sigma$, 
we have 
\begin{align*}
    |D_y\phi_{2,n}(\theta^{*,n})|&\leq C \sum_{j}\frac{d_\sigma(\theta^{*,n},\theta^{j,n})}{2^j},\,|\pa_t \phi_{2,n}(\theta^{*,n})|\leq C \sum_{j}\frac{d_\sigma(\theta^{*,n},\theta^{j,n})}{2^j},\\
        |D_\mu\phi_{2,n}(\theta^{*,n},x)|&\leq \sum_{j\geq 0}\frac{|m(\mu^{*,n}) - m(\mu^{j,n})|}{2^j}\\
        &+C_d \left( \sigma+\frac{1}{\sigma}(|x|^2+|m(\mu^{*,n})|^2)+\frac{1}{\sigma} \int |x|^2 \, \Sc_0(\mu^{*,n})(dx)\right)\\
        &\leq C_{d,\sigma} \left( C_n^{1/4}+1+|x|^2\right)\\
    |D_{x \mu}\phi_{2,n}(\theta^{*,n},x)|&\leq C_d \left(1+\frac{1}{\sigma^2}(|x|^2+|m(\mu^{*,n})|^2) \right) \leq  C_{d,\sigma} \left( 1+|x|^2\right)\\
        \Hc\phi_{2,n}(\theta) &= \sum_{j\geq 0}\frac{\Hc \rho_\sigma^2(\mu,\mu^{j,n})}{2^j}=4 I_d=D^2_{yy}\phi_{2,n}(\theta),\,D^2_{y\mu}\phi_{2,n}(\theta,x)=0. 
\end{align*}

Thus, as $n\to \infty$, all derivatives of $$\sqrt {C_n} ( \phi_{2,n}(\theta)+\tilde \phi_{2,n}(\tilde\theta))$$ evaluated at $(\theta^{*,n},\tilde \theta^{*,n})$ goes to $0$ in their respective metrics. 
Additionally, thanks to Lemma \ref{lem:propsliced}, Lemma \ref{lem:L}, and \eqref{eq:cvnstar}, we have that 
\begin{align*}
    &D_\mu\Psi(\mu^{*,n})\to D_\mu\Psi(\mu^{*}),\,
    D_{x \mu}\Psi(\mu^{*,n})\to D_{x \mu}\Psi(\mu^{*})\\
    &D_\mu\tilde \Psi(\tilde \mu^{*,n})\to D_\mu\tilde \Psi(\tilde \mu^{*}),\,
    D_{x \mu}\tilde \Psi(\tilde \mu^{*,n})\to D_{x \mu}\tilde \Psi(\tilde \mu^{*})
\end{align*}
Thus, sending $n\to \infty$ concludes the proof.



 \end{proof}

\section{Proof of the comparison result}\label{s.proofcomp}
The proof is divided into several steps. Firstly, by a standard argument for the purpose of contraposition, without loss of generality, we can assume certain monotonicity of the Hamiltonian in the value of function. Then in Step 1, we apply the doubling variable technique, where the variational principle is applied to obtain maximum. In Step 2, we estimate the distance between optimizers using the Lipschitz property of sub/super solutions and the regularity of the hamiltonian. In Step 3, we compare the jets obtained from Theorem~\ref{ishii} to derive a contradiction. 
\begin{proof}[Proof of Theorem \ref{thm:comp}]
    The proof is based on adapting the ideas of \cite[Theorem 3.59]{fabbri2017stochastic}, \cite[Theorem 2]{lions1989viscosity3} to the Wasserstein space, which is a metric space and not a Banach space. Similarly to \cite[Remark 3.9]{ekren2014viscosity}, by passing from $U$ to $e^{2 L_h t} U$, without loss of generality we can assume that for $u\geq  v$, it holds that 
    \begin{align}\label{eq:monou}
    &L_h(u-v) \leq h(\theta,v,p,X^{11} ,f,g,X^{12} ,X^{22}) - h(\theta,u,p,X^{11} ,f,g,X^{12} ,X^{22})
    \end{align}
and $h$ satisfies \eqref{eq:unifh1} with $3L_h$. 
Denote $\chi(\theta):=e^{-\tilde Lt}\left(1+|y|^2+\left(\int |x|^2\mu(dx)\right)^2\right)$ so that 
\begin{align*}
    \pa_t \chi(\theta)&=-\tilde L \chi(\theta),\, D_y \chi(\theta)=2e^{-\tilde Lt}y,\,D^2_{yy} \chi(\theta)=2e^{-\tilde Lt}I_d\\
    D_\mu \chi(\theta,x)&=4e^{-\tilde Lt}\int |z|^2\mu(dz)x,\,D_{x \mu} \chi(\theta,x)=4e^{-\tilde Lt}\int |z|^2\mu(dz)I_d,\,\Dc^2_{y\mu} \chi(\theta)=0\\
    \Hc \chi(\theta)&=e^{-\tilde Lt}\left(8m(\mu)m^\top(\mu)+4\int |x|^2\mu(dx) I_d\right).
\end{align*}
    
The conclusion of the theorem being $\sup_{\theta\in \Theta}(u-v)(\theta)=0$, to obtain a contradiction, we assume that 
$$\sup_{\theta\in \Theta}(u-v)(\theta)>0.$$
Thus, there exists $\iota>0$ so that there exists $r>0$ such that 
$$\sup_{\theta\in \Theta}(u_\iota-v)(\theta)=:2r>0$$
    where $u_\iota(\theta):=u(\theta)-\iota \chi(\theta).$
 We now show there exists $\tilde L$ large enough so that $u_\iota$ is a viscosity subsolution of the same equation. Let $\phi$ be a partially $C^2$ regular test function so that
$u_\iota-\phi$ has a local maximum at $\theta\in \Theta$. This is equivalent to $u-\phi_\iota$ has a local maximum at the same point where $\phi_\iota=\phi+\iota\chi$.
Thus, the viscosity property of $u$ yields 
\begin{align*}
   -\pa_t \phi_\iota(\theta)-h(\theta,u(\theta),D_y \phi_\iota(\theta),D^2_{yy}\phi_\iota(\theta) ,D_\mu \phi_\iota(\theta),D_{x\mu}\phi_\iota(\theta),\Dc^2_{y\mu}\phi_\iota(\theta) ,\Hc \phi_\iota(\theta))\leq 0.
\end{align*}

Thanks to the equality $\phi=\phi_\iota -\iota \chi$ and \eqref{eq:monou},
we can estimate 
\begin{align*}
   &-\pa_t \phi(\theta)-h(\theta,u_\iota(\theta),D_y \phi(\theta),D^2_{yy}\phi(\theta) ,D_\mu \phi(\theta),D_{x\mu}\phi(\theta),\Dc^2_{y\mu}\phi(\theta) ,\Hc \phi(\theta))\\
   &\leq -\pa_t \phi_\iota(\theta)-\iota (\tilde L+L_h)\chi(\theta)\\
   &\quad-h(\theta,u(\theta),D_y  \phi (\theta),D^2_{yy} \phi (\theta) ,D_\mu  \phi (\theta),D_{x\mu} \phi (\theta),\Dc^2_{y\mu} \phi (\theta) ,\Hc  \phi (\theta))\\
   &\leq -\pa_t \phi_\iota(\theta)-h(\theta,u(\theta),D_y \phi_\iota(\theta),D^2_{yy}\phi_\iota(\theta) ,D_\mu \phi_\iota(\theta),D_{x\mu}\phi_\iota(\theta),\Dc^2_{y\mu}\phi_\iota(\theta) ,\Hc \phi_\iota(\theta))\\
   &\quad+3L_h\iota \left(|D_y \chi(\theta)|+|D^2_{yy}\chi(\theta)|+||D_\mu \chi(\theta)||_\mathfrak{q}+||D_{x\mu}\chi(\theta)||_\mathfrak{q}+|\Dc^2_{y\mu}\chi(\theta)|+|\Hc \chi(\theta)|\right)\\
   &\quad-\iota (\tilde L+L_h)\chi(\theta)\\
   &\leq 3L_h\iota \left(|D_y \chi(\theta)|+|D^2_{yy}\chi(\theta)|+||D_\mu \chi(\theta)||_\mathfrak{q}+||D_{x\mu}\chi(\theta)||_\mathfrak{q}+|\Dc^2_{y\mu}\chi(\theta)|+|\Hc \chi(\theta)|\right)\\
   &\quad-\iota (\tilde L+L_h)\chi(\theta)
\end{align*}
where we used the viscosity property of $u$ to obtain the last inequality. 
Using the derivatives of $\chi$, the definition of $\lVert \cdot \rVert_{\mathfrak{q}}$ and Young's inequality we obtain 
\begin{align*}
   &-\pa_t \phi(\theta)-h(\theta,u_\iota(\theta),D_y \phi(\theta),D^2_{yy}\phi(\theta) ,D_\mu \phi(\theta),D_{x\mu}\phi(\theta),\Dc^2_{y\mu}\phi(\theta) ,\Hc \phi(\theta))\\
   &\leq e^{-\tilde Lt}\iota 6L_h \left(|y|+|I_d|+2\int |x|^2\mu(dx)\left(\sup_{x}\frac{|x|}{1+|x|^2}+2|I_d|\right)+4|m(\mu)|^2\right)\\
   &\quad-\iota e^{-\tilde Lt}(\tilde L+L_h)\left(1+|y|^2+\left(\int |x|^2\mu(dx)\right)^2\right)\\
   &\leq e^{-\tilde Lt}\iota  \left(6L_h\sqrt{d}+6L_h|y|+12L_h(3+2\sqrt{d})\int |x|^2\mu(dx)\right)\\
   &\quad-\iota e^{-\tilde Lt}(\tilde L+L_h)\left(1+|y|^2+\left(\int |x|^2\mu(dx)\right)^2\right)\\
   &\leq e^{-\tilde Lt}\iota  \left(6L_h\sqrt{d}+18L_h+\frac{L_h|y|^2}{2}+\frac{(12(3+2\sqrt{d}))^2}{2}L_h+\frac{L_h}{2}\left(\int |x|^2\mu(dx)\right)^2\right)\\
   &\quad-\iota e^{-\tilde Lt}(\tilde L+L_h)\left(1+|y|^2+\left(\int |x|^2\mu(dx)\right)^2\right)\\
   &\leq e^{-\tilde Lt}\iota  \left(6L_h\sqrt{d}+18L_h+\frac{(12(3+2\sqrt{d}))^2}{2}L_h-\tilde L+L_h\right)
\end{align*}
for any choice of $\tilde L\geq 0$. Thus, taking $\tilde L$ large enough, $u_\iota$ is a viscosity subsolution of 
\begin{align*}
   &-\pa_t u_\iota(\theta)-h(\theta,u_\iota(\theta),D_y u_\iota(\theta),D^2_{yy}u_\iota(\theta) ,D_\mu u_\iota(\theta),D_{x\mu}u_\iota(\theta),\Dc^2_{y\mu}u_\iota(\theta) ,\Hc_\iota(\theta))\leq 0
\end{align*}
which also satisfies 
$$u_\iota(T,\cdot)\leq u(T,\cdot)\leq v(T,\cdot).$$
{\it Step 1: Introducing doubling of variable with $d_F$:} We fix $\e>0$, $\iota\in [0,\iota_0]$ and denote $$G_\iota(\theta,\tilde \theta)= u_\iota(\theta)-v(\tilde \theta)-\frac{d^2_F(\theta,\tilde \theta)}{2\e}$$  and introduce the optimization problem 
    \begin{align}
\sup_{(\theta,\tilde \theta)\in\Theta^2}G_\iota(\theta,\tilde \theta).\label{eq:opt1}
    \end{align}
Due to the boundedness from above of $u,-v$, \eqref{eq:opt1} is finite. Fix $\kappa>0$, since 
$$2r=\sup_{\theta\in \Theta}(u_\iota-v)(\theta)\leq \sup_{(\theta, \tilde \theta)\in \Theta^2}G_\iota(\theta, \tilde \theta),$$
we can find $(\theta^0, \tilde \theta^0)\in \Theta^2$ so that 
\begin{align}\label{eq:deftheta0}
    \sup_{(\theta, \tilde \theta)\in \Theta^2}G_\iota(\theta, \tilde \theta)-\kappa<G_\iota(\theta^0, \tilde \theta^0)\mbox{ and }r\leq G_\iota(\theta^0, \tilde \theta^0).
\end{align}
 Using the variational principle in Lemma \ref{lem:var} with the gauge-type function $d_\sigma$, for $\delta=\sqrt{\kappa}$, there exists 
 $$(\theta^*, \tilde \theta^*)=((t^*,y^*,\mu^*),(\tilde t^*,\tilde y^*,\tilde \mu^*))$$ 
 and 
$$\{(\theta^j, \tilde \theta^j):j\geq 1\}=\{((t^j,y^j,\mu^j),(\tilde t^j,\tilde y^j,\tilde \mu^j)):j\geq 1\}$$ 
depending on $\e,\kappa$ such that denoting 
$$\phi(\theta):=\sum_{j=0}^\infty\frac{d_\sigma^2(\theta,\theta^j)}{2^j}\mbox{ and }\tilde \phi(\tilde \theta):=\sum_{j=0}^\infty\frac{d_\sigma^2(\tilde \theta,\tilde \theta^j)}{2^j}$$
we have
 \begin{align}\label{eq:bound1}
    d_\sigma^2(\theta^*,\theta^j)+d_\sigma^2(\tilde \theta^*,\tilde \theta^j)&\leq \frac{\sqrt{\kappa}}{ 2^j}\mbox{ for }j\geq 0    \\
        G_\iota(\theta^0, \tilde \theta^0)&\leq G_\iota(\theta^*, \tilde \theta^*)- \sqrt{\kappa}(\phi(\theta^*)+\tilde \phi(\tilde \theta^*))  \label{eq:boundt} \\
\label{eq:bound2}\mbox{and }
(\theta, \tilde \theta)\in \Theta^2&\mapsto G_\iota(\theta, \tilde \theta)- \sqrt{\kappa}(\phi(\theta)+\tilde \phi(\tilde \theta))  
\end{align}
admits a strict global maximum at $(\theta^*, \tilde \theta^*).$
Note that by a direct computation we have that 
\begin{align}\label{eq:derphi}
    &D_y\phi(\theta)=2\sum_{j=0}^\infty\frac{y-y^j}{2^j},\,D_y\tilde \phi(\tilde \theta)=2\sum_{j=0}^\infty\frac{\tilde y-\tilde y^j}{2^j},\\
    &\mathcal{D}_{y\mu}\phi(\theta)=\mathcal{D}_{y\mu}\tilde \phi(\tilde \theta)=0,\,D^2_{yy}\phi(\theta)=D^2_{yy}\tilde\phi(\tilde \theta)=4I_d,\,\Hc\phi(\theta)=\Hc\tilde \phi(\tilde \theta)=4I_d.\label{eq:derphi2}
\end{align}

{\it Step 2: A priori estimates on the optimizers:} Define the modulus of continuity of $v$ in time defined by 
$$\omega_{v}(t):=\sup\{|v(s,y,\mu)-v(\tilde s,y,\mu)|:|s-\tilde s|\leq t,y\in \R^d,\mu\in \Pc_2(\R^d)\}$$
which satisfies for all $t,s\geq 0$, the inequality
$$\omega_{v}(t+s)\leq \omega_{v}(s)+\omega_{v}(t).$$
Also denote $L_{v,F}$ the Lipschitz constant of $v$ in $(y,\mu)$ w.r.t. $d_F$ metric. 
For $\e,\kappa>0$, define $t_{\e,\kappa}$ the maximum positive root of the equation 
$$t_{\e,\kappa}^2 =2\kappa+2 \omega_{v}(\sqrt{\e}t_{\e,\kappa}).$$ 
Similar to \cite[Theorem]{lasry1986remark}, thanks to the subadditivity of $\omega_{v}$,
we have
$$t_{\e,\kappa}\to 0\mbox{ as }\e,\kappa\downarrow 0.$$
We now show that for all $\e,\kappa>0$, we have
\begin{align}\label{eq:bound:thetas}
   \frac{\sqrt{|y^*-\tilde y^*|^2+\rho^2_F(\mu^*,\tilde \mu^*)}}{\e}\leq L_{v,F}+\sqrt{2 L_{v,F}^2+2\frac{\kappa}{\e}}\mbox{ and } \frac{|t^*-\tilde t^*|}{\sqrt{\e}}\leq t_{\e,\kappa}.
\end{align}
By definition of $G$ and \eqref{eq:deftheta0}-\eqref{eq:bound2} we have the inequalities
\begin{align*}
    &\kappa+u(\theta^*)-\iota \chi(\theta^*)-v(\tilde \theta^*)-\frac{d_F^2(\theta^*,\tilde \theta^*)}{2\e}=\kappa+G_\iota(\theta^*,\tilde \theta^*)\\
    &\geq \kappa+G_\iota(\theta^0, \tilde \theta^0)+\sqrt{\kappa}(\phi(\theta^*)+\tilde \phi(\tilde \theta^*))    \geq  \sup_{(\theta, \tilde \theta)\in \Theta^2}G_\iota(\theta, \tilde \theta)+\sqrt{\kappa}(\phi(\theta^*)+\tilde \phi(\tilde \theta^*))   \\
    &\geq  G_\iota(\theta^*,(t^*,\tilde y^*,\tilde \mu^*))+\sqrt{\kappa}(\phi(\theta^*)+\tilde \phi(\tilde \theta^*))  \\
    &\geq  u(\theta^*)-\iota \chi(\theta^*)-v(t^*,\tilde y^*,\tilde \mu^*)+\sqrt{\kappa}(\phi(\theta^*)+\tilde \phi(\tilde \theta^*))   .
\end{align*}
Thus, given the Lipschitz continuity of $v$, we obtain the inequality
\begin{align*}
    \frac{d_F^2(\theta^*,\tilde \theta^*)}{2\e} +\sqrt{\kappa}(\phi(\theta^*)+\tilde \phi(\tilde \theta^*))&\leq \kappa+v( \theta^*)-v(t^*,\tilde y^*,\tilde \mu^*) \\
    &\leq \kappa+ L_{v,F}\sqrt{|y^*-\tilde y^*|^2+\rho^2_F(\mu^*,\tilde \mu^*)}\notag
\end{align*}
This inequality implies that  $b=\frac{\sqrt{|y^*-\tilde y^*|^2+\rho^2_F(\mu^*,\tilde \mu^*)}}{\e}$ satisfies 
\begin{align*}
     \left(b-L_{v,F}\right)^2\leq 2 L_{v,F}^2+2\frac{\kappa}{\e} 
\end{align*}
which implies that 
$$\frac{\sqrt{|y^*-\tilde y^*|^2+\rho^2_F(\mu^*,\tilde \mu^*)}}{\e}\leq L_{v,F}+\sqrt{2 L_{v,F}^2+2\frac{\kappa}{\e}}.$$
Similarly, \eqref{eq:deftheta0}-\eqref{eq:bound2} lead to 
\begin{align*}
    \frac{d_F^2(\theta^*,\tilde \theta^*)}{2\e} +\sqrt{\kappa}(\phi(\theta^*)+\tilde \phi(\tilde \theta^*))&\leq \kappa+v( \theta^*)-v(\tilde t^*, y^*, \mu^*) \\
    &\leq \kappa+ \omega_{v}(|t^*-\tilde t^*|).
\end{align*}
Thus, 
$a=\frac{|t^*-\tilde t^*|}{\sqrt{\e}}$
satisfies 
$$a^2\leq 2\kappa +2\omega_v(\sqrt{\e}a)$$
which shows that  
\begin{align*}
   \frac{|t^*-\tilde t^*|}{\sqrt{\e}}\leq t_{\e,\kappa}
\end{align*}
since $t_{\e,\kappa}$ is the largest root of this equation. 
We also define $t_\e$ as the largest positive root of
$$t_\e^2=2\omega_{v}(\sqrt{\e}t_\e).$$
Note that $t_\e>0$ if $\omega_v$ is not a constant function. Indeed, if $t_\e=0$, then for all $t>0$, we have 
$$t^2\geq 2\omega_{v}(\sqrt{\e}t).$$
By subadditivity, this implies that 
$$0\leq \frac{\omega_{v}(\sqrt{\e}(t+s))-\omega_{v}(\sqrt{\e}s)}{t}\leq \frac{\omega_{v}(\sqrt{\e}t)-\omega_{v}(0)}{t}\leq \frac{t}{2}.$$
Sending $t\to 0$ we have that $\omega_v$ is a  constant function. 

For the remaining of the proof, we assume that $\omega_v$ is not constant and for each $\e>0$, we choose $\kappa_\e$ small enough so that $t_{\e,\kappa}\leq 2 t_\e$ and $\kappa_\e \leq \e$ which implies that for all $\e>0$ and  $\kappa\in (0,\kappa_\e]$
we have 
\begin{align}\label{eq:bound:thetas2}
   \frac{\sqrt{|y^*-\tilde y^*|^2+\rho^2_F(\mu^*,\tilde \mu^*)}}{\e}\leq L_{v,F}+\sqrt{2 (L_{v,F}^2+2)}\mbox{ and } \frac{|t^*-\tilde t^*|}{\sqrt{\e}}\leq 2t_\e.
\end{align}

We use \eqref{eq:deftheta0}-\eqref{eq:bound2} one more time to obtain
\begin{align*}
    &\kappa+u(\theta^*)-\iota \chi(\theta^*)-v(\tilde \theta^*)-\frac{d_F^2(\theta^*,\tilde \theta^*)}{2\e}  \geq  G_\iota(( t^*,0,\delta_0),\tilde \theta^*)+\sqrt{\kappa}(\phi(\theta^*)+\tilde \phi(\tilde \theta^*))  \\
    &\geq  u( t^*,0,\delta_0)-\iota \chi( t^*,0,\delta_0)-v(\tilde t^*,\tilde y^*,\tilde \mu^*)+\sqrt{\kappa}(\phi(\theta^*)+\tilde \phi(\tilde \theta^*))   .
\end{align*}
Thus, we obtain a prior bound on $\theta^*$,
\begin{align}\label{ea:boundthetastar}
    \frac{\kappa+|u(\theta^*)-u( t^*,0,\delta_0)|}{\iota} 
    &\geq  \chi(\theta^*)-\chi( t^*,0,\delta_0)\\
    &=e^{-\tilde Lt^*}\left(|y^*|^2+\left(\int |x|^2\mu^*(dx)\right)^2\right)\notag  .
\end{align}

{\it Step 3: Using Ishii's Lemma :}
    Define the functions
    \begin{align*}
        \Lc(\mu,\eta,\nu)&:=2\int \frac{Re(F_k(\Sc_0(\mu))(F_k(\Sc_0(\eta))-F_k(\Sc_0(\nu)))^*)}{(1+|k|^2)^\lambda}dk\\
        &+2Tr(V^\top(\Sc_0(\mu))(V(\Sc_0(\eta))-V(\Sc_0(\nu))))\\
        \Psi(\mu)&:=2\rho^2_F(\Sc_0(\mu),\Sc_0(\mu^*))+\Lc(\Sc_0(\mu),\Sc_0(\mu^*),\Sc_0(\tilde \mu^*))\\
        \tilde \Psi(\mu)&:= 2\rho^2_F(\Sc_0(\mu),\Sc_0(\tilde \mu^*))-\Lc(\Sc_0(\mu),\Sc_0(\mu^*),\Sc_0(\tilde \mu^*))\\
         u_1(t,x,\mu)&:= u_\iota(t,x,\mu)-\sqrt{\kappa}\phi(t,x,\mu)  \\
         v_1(t, x,\mu)&:=v(t, x,\mu)+\sqrt{\kappa} \tilde \phi(t,x,\mu)
    \end{align*}
We now show that one can apply the Ishii's Lemma in Theorem \ref{ishii}.
Since $\rho_F$ and $\rho_\sigma$ topologically equivalent, $u_1,-v_1$ are $d_F$ upper semicontinuous in both topology and the property \eqref{cond:unifc0} holds for $u_1,v_1.$ It is now sufficient to show that one can choose $\e>0$ so that $t^*<T$ and $\tilde t^*<T$. Note that $t^*,\tilde t^*$
are bounded. Denote $t,\tilde t$ an accumulation point as $\e\downarrow 0$, $0<\kappa<\kappa_\e$. Thanks to \eqref{eq:bound:thetas2}, we have the equality $t=\tilde t.$
The non-negativity of $\phi,\tilde \phi$, \eqref{eq:deftheta0}, and \eqref{eq:boundt} implies that 
\begin{align}\label{eq:signstar}
    u_\iota(\theta^*)\geq v(\tilde \theta^*)+r.
\end{align}
Thus, thanks to the values of $u_\iota,v$ at $T,$ we have
\begin{align*}
    &\omega_{u_\iota}(|t^*-T|)+\omega_{v}(|\tilde t^*-T|)+L_{v,F}d_F((T,\tilde y^*,\tilde \mu^*),(T, y^*, \mu^*))\\
    &\geq v(T, y^*, \mu^*)-u_\iota(T,y^*,\mu^*)+r\geq r.
\end{align*}
Note that for $0<\kappa<\kappa_\e$ as $\e\downarrow 0$, we have $d_F((T,\tilde y^*,\tilde \mu^*),(T, y^*, \mu^*))\to 0$ thanks to \eqref{eq:bound:thetas2}. Thus, if $\tilde t=t=T$, we obtain $0\geq r$ which would conclude the proof. 
Thus, for the remainder of the proof we assume that $\tilde t=t<T$ and we choose $\e>0$, so that for all $0<\kappa<\kappa_\e$, 
$\max\{t^*,\tilde t^*\}<T$.

We can now use Theorem \ref{ishii} to obtain that 
there exist $X^*,\tilde X^*\in \Sc_{2d}$ so that 
\begin{align}
    &\Big(\frac{t^*-\tilde t^*}{\e},\frac{y^*-\tilde y^*}{\e},(X^*)^{11},\frac{2(m(\mu^*)-m({\tilde \mu}^*))+D_\mu\Psi(\mu^*)}{2\e},\notag\\
    &\quad\quad\quad\quad\quad\quad\quad\quad\quad\quad\quad\quad\quad\quad\frac{D_{x \mu}\Psi(\mu^*)}{2\e},(X^*)^{12},(X^*)^{22}\Big)\in\bar J^{2,+} u_1(\theta^*)\notag\\
    &\Big(\frac{t^*-\tilde t^*}{\e},\frac{y^*-\tilde y^*}{\e},-(\tilde X^*)^{11},\frac{2(m(\mu^*)-m({\tilde \mu}^*))-D_\mu\tilde \Psi(\tilde \mu^*)}{2\e},\notag\\
    &\quad\quad\quad\quad\quad\quad\quad\quad\quad\quad\quad\quad-\frac{D_{x\mu}\tilde \Psi(\tilde \mu^*)}{2\e},-(\tilde X^*)^{12} ,- (\tilde X^*)^{22}\Big)\in \bar J^{2,-} v_1(\tilde \theta^*)\notag\\
\label{eq:xstar}
               \mbox{and } &-\frac{3}{\e}I_{4d}\leq 
\begin{pmatrix}
    X^*&0\\0&\tilde X^*
\end{pmatrix}
\leq 
\frac{3}{\e}
\begin{pmatrix}
    I_{2d}&-I_{2d}\\-I_{2d}&I_{2d}
\end{pmatrix}.
\end{align}

Given the smoothness of $\phi, \tilde \phi$, the viscosity properties of the function $u_\iota,v$ and the continuity of $h$ leads to 
 \begin{align*}
    0&\leq h\left(\theta^*,u_\iota( \theta^*),\frac{y^*-\tilde y^*}{\e}+\sqrt{\kappa} D_y\phi(\theta^*),(X^*)^{11}+4\sqrt{\kappa} I_d,\right.\\
    &\quad\quad\quad\left.\frac{m(\mu^*)-m( {\tilde \mu}^*)}{\e}+\frac{D_\mu \Psi( \mu^*)}{2\e}+\sqrt{\kappa} D_\mu \phi( \theta^*),\right.\\
    &\quad\quad\quad\left.\frac{D_{x\mu}\Psi(\mu^*)}{2\e}+\sqrt{\kappa} D_{x\mu}\phi(\theta^*),(X^*)^{12},(X^*)^{22}+4\sqrt{\kappa} I_d\right)\\
     &-h\left(\tilde \theta^*,v(\tilde \theta^*),\frac{y^*-\tilde y^*}{\e}-\sqrt{\kappa} D_y\tilde \phi(\tilde \theta^*),-(\tilde X^*)^{11}-4\sqrt{\kappa} I_d,\right.\\
    &\quad\quad\quad\left.\frac{m(\mu^*)-m( {\tilde \mu}^*)}{\e}-\frac{D_\mu\tilde \Psi(\tilde \mu^*)}{2\e}-\sqrt{\kappa} D_\mu\tilde \phi(\tilde \theta^*),\right.\\
     &\quad\quad\quad\left.-\frac{D_{x\mu}\tilde \Psi(\tilde \mu^*)}{2\e}-\sqrt{\kappa} D_{x\mu}\tilde \phi(\tilde \theta^*),-(\tilde X^*)^{12} ,-(\tilde X^*)^{22}-4\sqrt{\kappa} I_d\right).
 \end{align*}
 Thus, thanks to \eqref{eq:signstar} and \eqref{eq:monou}, we have
 \begin{align*}
    &L_h r\leq L_h(u_\iota(\theta^*)-v(\tilde \theta^*))\\
    &\leq h\left(\theta^*,v(\tilde \theta^*),\frac{y^*-\tilde y^*}{\e}+\sqrt{\kappa} D_y\phi(\theta^*),(X^*)^{11}+4\sqrt{\kappa} I_d,\right.\\
    &\quad\quad\quad\left.\frac{m(\mu^*)-m( {\tilde \mu}^*)}{\e}+\frac{D_\mu \Psi( \mu^*)}{2\e}+\sqrt{\kappa} D_\mu \phi( \theta^*),\right.\\
    &\quad\quad\quad\left.\frac{D_{x\mu}\Psi(\mu^*)}{2\e}+\sqrt{\kappa} D_{x\mu}\phi(\theta^*),(X^*)^{12},(X^*)^{22}+4\sqrt{\kappa} I_d\right)\\
     &-h\left(\tilde \theta^*,v(\tilde \theta^*),\frac{y^*-\tilde y^*}{\e}-\sqrt{\kappa} D_y\tilde \phi(\tilde \theta^*),-(\tilde X^*)^{11}-4\sqrt{\kappa} I_d,\right.\\
    &\quad\quad\quad\left.\frac{m(\mu^*)-m( {\tilde \mu}^*)}{\e}-\frac{D_\mu\tilde \Psi(\tilde \mu^*)}{2\e}-\sqrt{\kappa} D_\mu\tilde \phi(\tilde \theta^*),\right.\\
     &\quad\quad\quad\left.-\frac{D_{x\mu}\tilde \Psi(\tilde \mu^*)}{2\e}-\sqrt{\kappa} D_{x\mu}\tilde \phi(\tilde \theta^*),-(\tilde X^*)^{12} ,-(\tilde X^*)^{22}-4\sqrt{\kappa} I_d\right).
 \end{align*}
For fixed $\e>0$, the family 
$$\left(\frac{\begin{pmatrix}
    y^*-\tilde y^*\\ m (\mu^*)-m( {\tilde {\mu}}^*)
\end{pmatrix}}{\e},X^*,\tilde X^*\right)=\left(\frac{\begin{pmatrix}
    y^*-\tilde y^*\\ m(\mu^*)-m({\tilde {\mu}}^*)
\end{pmatrix}}{\e},X^*,\tilde X^*\right)_{\kappa}$$
indexed by $\{\kappa:\,\kappa_\e\geq \kappa>0\}$ is bounded thanks to \eqref{eq:bound:thetas2}, \eqref{eq:xstar} and the inequality $t_{\e,\kappa}\leq 2t_\e$. Similarly to above, for fixed $\e>0$, $\{t^*,\tilde t^*\}$ is also bounded and as $\kappa\downarrow 0$ admits an accumulation point. Let $(t_{1,\e},\tilde t_{1,\e},p,X,\tilde X)$ be an accumulation point of this family as $\kappa\downarrow 0$. Taking $\e>0$ small enough, we can insure that $t_{1,\e}<T$, $\tilde t_{1,\e}<T$, and $(p,X,\tilde X)$ satisfies 
\begin{align}\label{eq:orderXn2}
        &-\frac{3}{\e}I_{4d}\leq 
\begin{pmatrix}
    X&0\\0&\tilde X
\end{pmatrix}
\leq 
\frac{3}{\e}
\begin{pmatrix}
    I_{2d}&-I_{2d}\\-I_{2d}&I_{2d}
\end{pmatrix}\mbox{ and }|p|\leq C
\end{align}
thanks to \eqref{eq:bound:thetas2}.

Additionally, thanks to \eqref{eq:bound1} applied for $j=0$ and \eqref{eq:bound:thetas2} we have that for $\kappa>0$ small enough
$$d_\sigma^2(\theta^*,\theta^0)+d_\sigma^2(\tilde \theta^*,\tilde \theta^0)+d_F(\theta^*,\tilde \theta^*)\leq 2t_\e\sqrt{\e}.$$
Denote by $p_1,p_2$ the first and second $d$ coordicates of $p$, and 
\begin{align*}
    \Delta:=&\left|p_1-\frac{y^*-\tilde y^*}{\e}\right|+\left|p_2-\frac{m(\mu^*)-m(\tilde \mu^*)}{\e}\right|+|X-X^*|\\
    &+ \sqrt{\kappa}\left( |D_y\phi(\theta^*)|+||D_\mu\phi(\theta^*,\cdot)||_q+||D_{x\mu}\phi(\theta^*,\cdot)||_q+4|I_d|\right)\\
    \tilde \Delta:=&\left|p_1-\frac{y^*-\tilde y^*}{\e}\right|+\left|p_2-\frac{m(\mu^*)-m(\tilde \mu^*)}{\e}\right|+|\tilde X-\tilde X^*|\\
    &+ \sqrt{\kappa}\left( |D_y\tilde \phi(\tilde \theta^*)|+||D_\mu\tilde \phi(\tilde \theta^*,\cdot)||_{\mathfrak{q}}+||D_{x\mu}\tilde \phi(\tilde \theta^*,\cdot)||_{\mathfrak{q}}+4|I_d|\right)\\
    I(x)&:=\frac{D_\mu\Psi(\mu^*,x)+D_\mu\tilde \Psi(\tilde \mu^*,x)}{2\e}
\end{align*}
so that by the definition of $p,X,\tilde X$, \eqref{eq:bound1}, and the expresssions of derivatives of $\phi,\tilde \phi$, 
$\Delta+\tilde \Delta\to 0$ as $\kappa\downarrow 0$.
By Lemmas \ref{lem:propsliced}-\ref{lem:L}, 
we have
\begin{align*}
    \frac{D_{\mu}\Psi(\mu,x)}{2\e}&=\frac{2}{\e}\sum_{i,j} (V^{i,j}(\Sc_0(\mu))-V^{i,j}(\Sc_0(\mu^*)))\big((x^j-m^j( \mu)) e_i+(x^i-m^i( \mu)) e_j\big)\\
    &+\frac{2}{\e}\int\frac{Re\left(i k\left(  F_k(\Sc_0(\mu))-f_k(x-m(\mu))\right)(F_k(\Sc_0(\mu))-F_k(\Sc_0(\mu^*)))^*\right)}{(1+|k|^2)^\lambda}dk\\
   &+\frac{1}{\e}\int \frac{Re(i k (F_k(\Sc_0(\mu))-f_k(x-m( \mu)))(F_k( \Sc_0(\mu^*))-F_k(\Sc_0(\tilde \mu^*)))^*)}{(1+|k|^2)^\lambda}dk\\
    &+\frac{1}{\e}\sum_{i,j}\big((x^j-m^j( \mu)) e_i+(x^i-m^i( \mu)) e_j\big)(V^{i,j}( \Sc_0(\mu^*))-V^{i,j}(\Sc_0(\tilde \mu^*)))\\
    \frac{D_{\mu}\tilde \Psi(\tilde \mu,x)}{2\e}&=\frac{2}{\e}\sum_{i,j} (V^{i,j}(\Sc_0(\tilde \mu))-V^{i,j}(\Sc_0(\tilde \mu^*)))\big((x^j-m^j( {\tilde \mu})) e_i+(x^i-m^i( {\tilde \mu})) e_j\big)\\
    &+\frac{2}{\e}\int\frac{Re\left(i k\left(  F_k(\Sc_0(\tilde \mu))-f_k(x-m({\tilde \mu}))\right)(F_k(\Sc_0(\tilde \mu))-F_k(\Sc_0(\tilde \mu^*)))^*\right)}{(1+|k|^2)^\lambda}dk\\
   &-\frac{1}{\e}\int \frac{Re(i k (F_k(\Sc_0(\tilde \mu))-f_k(x-m( {\tilde \mu})))(F_k(  \Sc_0(\mu^*))-F_k(\Sc_0(\tilde \mu^*)))^*)}{(1+|k|^2)^\lambda}dk\\
    &-\frac{1}{\e}\sum_{i,j}\big((x^j-m^j( {\tilde \mu})) e_i+(x^i-m^i({\tilde  \mu})) e_j\big)(V^{i,j}( \Sc_0(\mu^*))-V^{i,j}(\Sc_0(\tilde \mu^*)))
\end{align*}
Thus, given the symmetry of $V$ and the cancellation with the choice $\mu=\mu^*$ and $\mu=\tilde \mu^*$, we obtain
\begin{align}\label{eq:derpsi}
    &\frac{D_{\mu}\Psi(\mu^*,x)}{2\e}=\frac{2}{\e}\Big(V( \Sc_0(\mu^*))-V(\Sc_0(\tilde \mu^*))\Big)(x-m( \mu^*)) \\     &\quad\quad+\frac{1}{\e}\int \frac{Re(i k (F_k(\Sc_0(\mu^*))-f_k(x-m( \mu^*)))(F_k( \Sc_0(\mu^*))-F_k(\Sc_0(\tilde \mu^*)))^*)}{(1+|k|^2)^\lambda}dk\notag\\
    &\notag\frac{D_{\mu}\tilde \Psi(\tilde \mu^*,x)}{2\e}=-\frac{2}{\e}\Big(V( \Sc_0(\mu^*))-V(\Sc_0(\tilde \mu^*))\Big)(x-m( {\tilde \mu^*}))\\
    &\quad\quad-\frac{1}{\e}\int \frac{Re(i k (F_k(\Sc_0(\tilde \mu^*))-f_k(x-m( {\tilde \mu^*})))(F_k(  \Sc_0(\mu^*))-F_k(\Sc_0(\tilde \mu^*)))^*)}{(1+|k|^2)^\lambda}dk\notag
    \end{align}
and
\begin{align*}
    I(x)&=\frac{1}{\e}\int \frac{Re(i k (f_k(x-m( {\tilde \mu^*}))-f_k(x-m( \mu^*)))(F_k( \Sc_0(\mu^*))-F_k(\Sc_0(\tilde \mu^*)))^*)}{(1+|k|^2)^\lambda}dk\notag\\
    &+\frac{2}{\e}\Big(V( \Sc_0(\mu^*))-V(\Sc_0(\tilde \mu^*))\Big)(m( {\tilde \mu^*}- \mu^*)).\notag
\end{align*}
Using $k$-Lipschitz continuity of $x\mapsto f_k(x)$, and the Cauchy Schwarz inequality, we obtain  
\begin{align*}
    |I(x)|\leq &\frac{ |m( {\tilde \mu^*}-\mu^*)|}{\e}\int |k|^2|F_k( \Sc_0(\mu^*))-F_k(\Sc_0(\tilde \mu^*))|\frac{dk}{(1+|k|^2)^\lambda}+\frac{Cd^2_F(\theta^*,\tilde \theta^*)}{\e}\\
    &\leq\frac{d^2_F(\theta^*,\tilde \theta^*)}{\e}\sqrt{\int \frac{dk}{(1+|k|^2)^{\lambda-2}}}+\frac{Cd^2_F(\theta^*,\tilde \theta^*)}{\e}\\
    |D_xI(x)|\leq &\frac{ |m( {\tilde \mu^*}-\mu^*)|}{\e}\int |k|^3|F_k( \Sc_0(\mu^*))-F_k(\Sc_0(\tilde \mu^*))|\frac{dk}{(1+|k|^2)^\lambda}\\
    &\leq\frac{d^2_F(\theta^*,\tilde \theta^*)}{\e}\sqrt{\int \frac{dk}{(1+|k|^2)^{\lambda-3}}}.
\end{align*}
Since $2(\lambda-3)>d$, we obtain from \eqref{eq:bound:thetas2} that 
\begin{align*}
    |I(x)|+ |D_xI(x)|\leq \frac{Cd^2_F(\theta^*,\tilde \theta^*)}{\e}\leq 4Ct_\e^2.
\end{align*}
for $\kappa\in (0,\kappa_\e].$

Thanks to our Lipschitz continuity assumption on $h$ and \eqref{eq:bound:thetas2}-\eqref{ea:boundthetastar}, we have that 
 \begin{align*}
    L_hr&\leq h\left(\theta^*,v(\tilde \theta^*), p_1,( X)^{11}, p_2+\frac{D_\mu\Psi(\mu^*)}{2\e},\frac{D_{x\mu}\Psi(\mu^*)}{2\e},( X)^{12},( X)^{22}\right)\\
     &-h\left(\tilde \theta^*,v(\tilde \theta^*), p_1,-(\tilde X)^{11} ,p_2+\frac{D_\mu\Psi(\mu^*)}{2\e},\frac{D_{x\mu}\Psi(\mu^*)}{2\e},-(\tilde X)^{12} ,-(\tilde X)^{22}\right)\\
     &+3L_h\Delta \left(1+|y^*|+\int |x|^2 \mu^*(dx)\right)\\
     &+3L_h \left(1+|\tilde y^*|+\int |x|^2\tilde \mu^*(dx)\right)(\tilde \Delta+ ||I||_{\mathfrak{q}}+||D_x I||_{\mathfrak{q}})\\
     &\leq h\left(\theta^*,v(\tilde \theta^*), p_1,( X)^{11}, p_2+\frac{D_\mu\Psi(\mu^*)}{2\e},\frac{D_{x\mu}\Psi(\mu^*)}{2\e},( X)^{12},( X)^{22}\right)\\
     &-h\left(\tilde \theta^*,v(\tilde \theta^*), p_1,-(\tilde X)^{11} ,p_2+\frac{D_\mu\Psi(\mu^*)}{2\e},\frac{D_{x\mu}\Psi(\mu^*)}{2\e},-(\tilde X)^{12} ,-(\tilde X)^{22}\right)\\
     &+C\left(1+\e+\sqrt{\frac{\sup|u|+\kappa+1}{\iota}}\right)(\Delta+\tilde \Delta+ 4Ct_\e^2)
\end{align*}
for some constant $C$ that only depends on the dimension and the Lipscthiz constants.

The inequalities \eqref{eq:xstar} and \eqref{eq:contpsi} allow us to obtain there exists a modulus of continuity $w_h$ that might depend on the upper bound in \eqref{eq:bound:thetas2} so that
\begin{align*}
 L_hr&\leq \omega_h\left(d_F(\theta^*,\tilde \theta^*)\right)\left(1+\sqrt{\frac{\sup|u|+\kappa+1}{\iota}}\right)\\
 &+C\left(1+\e+\sqrt{\frac{\sup|u|+\kappa+1}{\iota}}\right)(\Delta+\tilde \Delta+ 4Ct_\e^2)
\\
 &\leq C\left(1+\e+\sqrt{\frac{\sup|u|+\kappa+1}{\iota}}\right)(\omega_h\left(4t_\e^2+2t_\e \sqrt{\e}\right)+\Delta+\tilde \Delta+ 4Ct_\e^2)
 \end{align*}
 for all $\kappa \in (0,\kappa_\e]$.
Taking $\e>0$, $\kappa\in (0,\kappa_\e]$ small enough, we obtain the contradiction with $r>0$.

\end{proof}

\section{Proofs for stochastic control with partial observation}\label{s.proofe}

The following estimate relies on the Sobolev embedding theorem, and the fact that $m^{t,\mu,\alpha}_s$ is a push-forward distribution of some SDE. 

\begin{proof}[Proof of Lemma~\ref{lem:measureflow_estimate}]
Let us take 
$$Y^{t,\alpha}_s=\int_t^s \tilde \sigma(\alpha_u) \, dW_u, \quad \tilde X^{t,\mu,\alpha}_s=X^{t,\mu,\alpha}_s-Y^{t,\alpha}_s, \quad \tilde m^{t,\mu,\alpha}_s=\mathcal{L}(\tilde X^{t,\mu,\alpha}_s \, | \, \mathcal{F}_s^W).$$
Then it can be easily checked that 
\begin{align*} 
d\tilde{X}^{t,\mu,\alpha}_s= b(\tilde{X}^{t,\mu,\alpha}_s+Y^{t,\alpha}_s, \alpha_s) \, ds + \sigma(\tilde{X}^{t,\mu,\alpha}_s+Y^{t,\alpha}_s, \alpha_s) \, dV_s,
\end{align*}
and also $m^{t,\mu,\alpha}_s=(I_d+Y^{t,\alpha}_s)_{\sharp}\tilde m_s$ $a.s.$

Noting that $Y_s^{t,\alpha}$ is known given $\mathcal{F}^W_s$, and hence by standard estimates of SDE regarding $\tilde X_s$, 
\begin{align*}
   W_2(\tilde m_s^{t,\mu,\alpha}, \mu )^2 \leq  L(s-t) \quad a.s.,
\end{align*}
where $L$ is a positive constant depending only on the coefficients $b$, $\sigma$, and $\tilde \sigma$. Making use of the triangle inequality of Wasserstein metric and the fact that $$W_2(m^{t,\mu,\alpha}_s,\tilde m^{t,\mu,\alpha}_s)^2 \leq |Y^{t,\alpha}_s|^2 \quad a.s., $$
we get
$$
W_2(m^{t,\mu,\alpha}_s, \mu)^2 \leq L(s-t + |Y^{t,\alpha}_s|^2) \quad a.s.
$$
Therefore $s \mapsto m^{t,\mu,\alpha}_s(\omega)$ is continuous for almost every $\omega$ and 
$$
\E[W_2(m^{t,\mu,\alpha}_s, \mu)^2] \leq L(s-t). 
$$

Finally, thanks to the Sobolev embedding, we know that any $f \in \mathbb{H}_{\lambda}$ is Lipschitz, and hence $\lVert \nu-\eta\rVert_{-\lambda} \leq C W_1(\nu,\eta)\leq C W_2(\nu,\eta)$ for any $\nu,\eta \in \mathcal{P}_2(\R^d)$, where $C$ is a positive constant from Sobolev embedding. It follows that $\mathbb{E}[\lVert m^{t,\mu,\alpha}_s- \mu \rVert_{-\lambda}^2] \leq CL(s-t)$.
    
\end{proof}

The proof of the dynamic programming principle is standard after showing the Lipschitz property of value function, which heavily depends on the linearity of $\mu \mapsto J(t,\mu,\alpha)$.

\begin{proof}[Proof of Proposition~\ref{prop:Lipschizvalue}]
Note that  $(t,\mu) \mapsto J(t,\mu,\alpha)$ being $L$-Lipschitz uniformly for all admissible controls $\alpha$ implies the $L$-Lipschitz continuity of $(t,\mu) \mapsto v(t,\mu)$. Also as commented in \cite{bandini2019randomized}, with the continuity of value functions the verification of dynamic programming principle is standard. Therefore, it suffices to show the Lipschitz property of $(t,\mu) \mapsto J(t,\mu,\alpha)$. In the rest of the proof, let us fix a control $\alpha$, and suppress $\alpha$ in the notation $X^{t,x,\alpha}_s$, where $X^{t,x,\alpha}_s$ denotes the solution starting from $\mu=\delta_x$.

\vspace{4pt} 
\textit{Step I.}
Denoting $G(x):=\E[g(X_u^{t,x})]$, we prove that $G \in \mathbb{H}_{\lambda}$. Due to our assumption, $g$ is of exponential decay, and hence we have the estimate 
\begin{align*}
  \E[ g(X_u^{t,x})] &\leq K\E\left[e^{-c|X_u^{t,x}|}\right]=K \int_0^{\infty} e^{-cr} \, \mathbb{P}(|X_{u}^{t,x}| \in dr) \\
    &=\frac{K}{c} \int_0^{\infty} \int_0^{\infty} \mathbbm{1}_{\{z \geq r\}}e^{-cz} \, dz \mathbb{P}(|X_{u}^{t,x}| \in dr) \\
    &= \frac{K}{c}\int_0^{\infty} e^{-cz} \mathbb{P}(|X_u^{t,x}| \leq z) \, dz
\end{align*}
where we use Fubini's theorem in the last equality.  For $z \leq |x|/2$, we have that $$\mathbb{P}(|X_u^{t,x}| \leq z) \leq \mathbb{P}(|X_u^{t,x}-x| \geq |x|-z) \leq \frac{\E[|X_u^{t,x}-x|^{\beta}]}{(|x|/2)^{\beta}}.$$
Since $b,\sigma$ are uniformly bounded, we have the boundedness of $\E[|X_u^{t,x}-x|^{\beta}]$ for any $\beta \geq 2$, and hence for $|x| \geq 1$
\begin{align}\label{eq:costintegrability}
    \E[g(X_u^{t,x})]\leq \frac{K}{c} \int_{|x|/2}^{\infty} e^{-cz} \, dz + \frac{K}{c} \int_{0}^{|x|/2} \frac{C}{(|x|/2)^{\beta}} \, dz \leq C\left(e^{-c|x|}+ \frac{1}{|x|^{\beta-1}} \right),
\end{align}
where $C$ is a constant depending on $\alpha$, $\lVert b \rVert_{\infty}$, and $ \lVert \sigma \rVert_{\infty}$. Choosing $\beta \geq d+1$, it shows that $x \mapsto \E[g(X_u^{t,x})]$ is $L^2$-integrable. 

According to \cite[Chapter 3]{MR1472487} $x \mapsto X_u^{t,x}$ is $\lambda$-th differentiable, and we denote its $k$-th order derivative by $X^{t,x,(k)}_u$. Let us now estimate the first-order derivative of $G$. We actually have that $$|G^{(1)}(x)|= \left|\E\left[g^{(1)}(X_u^{t,x}) X_u^{t,x,(1)}\right]\right| \leq \sqrt{\E\left[\left|g^{(1)}(X_u^{t,x})\right|^2 \right]}\sqrt{\E\left[\left|X^{t,x,(1)}_u\right|^2 \right]}. $$
According to our assumptions, $x \mapsto g^{(1)}(x)$ is of exponential decay, and therefore $x \mapsto \sqrt{\E\left[\left|g^{(1)}(X_u^{t,x})\right|^2 \right]}$ is also $L^2$-integrable due to \eqref{eq:costintegrability} for a suitable choice of $\alpha$. It suffices to show that the $L^2$-norm of $X_u^{t,x,(1)}$ is uniformly bounded. According to \cite[Chapter 3]{MR1472487}, $X_u^{t,x,(1)}$ satisfies the SDE 
\begin{align*}
    dX_u^{t,x,(1)}=&b^{(1)}(X_u^{t,x},\alpha_u) X_u^{t,x,(1)} \,du+ \sigma^{(1)}(X_u^{t,x},\alpha_u) X_u^{t,x,(1)} \, dV_u \\
    X_t^{t,x,(1)}=&\mathbf{1},
\end{align*}
where $\mathbf{1}$ represents a $d$-dimensional vector with $1$ at each entry. Since $\lVert b^{(1)} \rVert_{\infty}+\lVert \sigma^{(1)} \rVert_{\infty} <+\infty$, the SDE above is well-posed and has a unique solution. By standard estimate and the BDG inequality, it can be easily seen that for any $\beta \geq 2$
\begin{align}\label{eq:SDEintegrability}
    \E[|X_u^{t,x,(1)}|^{\beta}]\leq C\left(1+ \int_t^u \E[|X_u^{t,x,(1)}|^{\beta}] \,du  \right).
\end{align}
Invoking Gr\"{o}nwall's inequality, we prove that $ \E[|X_u^{t,x,(1)}|^{\beta}]$ is bounded uniformly in $x$ for any fixed $\beta \geq 2$.

Induction: by chain rule we get that 
\begin{align*}
    |G^{(2)}(x)|= \left| \E\left[g^{(2)}(X_u^{t,x}) |X_u^{t,x,(1)}|^2+ g^{(1)}(X_u^{t,x})X_u^{t,x,(2)} \right]\right|. 
\end{align*}
Thanks to \eqref{eq:costintegrability} and \eqref{eq:SDEintegrability} for the suitable choice of $\alpha,\beta$, the function $x \mapsto \E\left[g^{(2)}(X_u^{t,x}) |X_u^{t,x,(1)}|^2 \right]$ is $L^2$-integrable. By the same reasoning, it suffices to show that $\E[|X_u^{t,x,(2)}|^2]$ is uniformly bounded. To see this point, from the SDE 
\begin{align*}
    dX_u^{t,x,(2)}=&\left(b^{(1)}(X_u^{t,x},\alpha_u) X_u^{t,x,(2)}+b^{(2)}(X_u^{t,x},\alpha_u) |X_u^{t,x,(1)}|^2 \right) \,du \\
    &+ \left(\sigma^{(1)}(X_u^{t,x},\alpha_u) X_u^{t,x,(2)}+\sigma^{(2)}(X_u^{t,x},\alpha_u) |X_u^{t,x,(1)}|^2 \right)\, dV_u, \\
    X_t^{t,x,(2)}=& 0, 
\end{align*}
we can get an integral inequality for $ \E[|X_u^{t,x,(2)}|^{\beta}]$ as in \eqref{eq:SDEintegrability} where the constant $C$ depends on the moments of $X_u^{t,x,(1)}$ and the sup-norm of $b^{(2)},\sigma^{(2)}$. By Gr\"{o}nwall's inequality again, we get a uniform bound for $\E[|X_u^{t,x,(2)}|^{\beta}]$ and hence $x \mapsto G^{(2)}(x)$ is $L^2$-integrable. Repeating the argument, we prove that $G \in \mathbb{H}_{\lambda}$.

\vspace{4pt} 

\textit{Step II.} Similarly, under our assumptions it can be checked that $x \mapsto \E[f(X_s^{t,x,\alpha},\alpha_s)] \in \mathbb{H}_{\lambda}$ for all $s \in [t,T]$, and hence  $x \mapsto J(t,x,\alpha) \in \mathbb{H}_{\lambda}$, where
$$J(t,x,\alpha):=\E \left[\int_t^T f(X^{t,x,\alpha}_s, \alpha_s) \, ds + g(X^{t,x,\alpha}_T) \right].$$
Therefore we obtain the Lipschitz continuity in measure
$$J(t,\mu,\alpha)-J(t,\nu,\alpha)= \int J(t,x,\alpha) \, (\mu-\nu)(dx) \leq L \lVert \mu- \nu \rVert_{-\lambda}.$$
Regarding the regularity in time variable, due to Lemma~\ref{lem:measureflow_estimate}, it can be seen that 
\begin{align*}
    \left|J(t,\mu,\alpha)-J(s,\mu,\alpha) \right| & \leq \E \left[ \left|J(s,m_s^{t,\mu,\alpha},\alpha)-J(s,\mu,\alpha) \right| \right] \\
    & \leq L \E \left[ \lVert m_{s}^{t,\mu,\alpha}-\mu \rVert_{-\lambda} \right] 
     \leq L \sqrt{s-t}. 
\end{align*}

\end{proof}

To prove the next result, we apply It\^{o}-Wentzell formula and use the fact that the conditional law is the push forward measure along the common noise.

\begin{proof}[Proof of Proposition~\ref{lem:Ito}]
The proof of this result is divided into three steps. For simplicity of notation, we suppress $(t,\mu,\alpha)$ in notation $X^{t,\mu,\alpha}$. 

\vspace{4pt}
\textit{Step I:} Let us define two auxiliary processes
\begin{align*}
    Y_t= \int_0^t \tilde \sigma(\alpha_s) \, dW_s, \quad
    \tilde{X}_t=X_t - Y_t.
\end{align*}
Then $\tilde{X}_t$ satisfies the dynamics
\begin{align*}
    d\tilde{X}_t= b(\tilde{X}_t+Y_t, \alpha_t) dt + \sigma(\tilde{X}_t+Y_t, \alpha_t) \, dV_t. 
\end{align*}
Denote the conditional law of $\tilde{X}_t$ given $F^W_t$ by $\tilde{m}_t$. Then it can be seen that for almost every $\omega$, we have that 
\begin{align*}
    d\tilde m_t(\omega)(h)=\tilde m_t(\omega) \left(D h(\cdot)^\top b(\cdot+Y_t(\omega),\alpha_t(\omega))+\frac{1}{2} Tr (\mathcal{H}h(\cdot) \sigma \sigma^\top(\cdot + Y_t(\omega),\alpha_t(\omega)))\right) dt,
\end{align*}
and hence for almost every $\omega $, $t \mapsto \tilde m_t(\omega)$ the measure flow of some SDE. Also, we have $m_t(\omega)= (I_d+Y_t(\omega))_{\sharp} \tilde m_t$.

\vspace{4pt} 

\textit{Step II:} Define $\tilde m_s^y(\omega)= (id+y)_{\sharp}\tilde m_s$. Then we have that for each $y \in \R^d$
\begin{align}\label{eq:ito1}
    \psi(t,\tilde m^y_t(\omega))=&\psi(0,\tilde m^y_0(\omega))+\int_0^t \pa_t \psi(s,\tilde m^y_s(\omega)) \, ds \\
    &+ \int_0^t \,ds \int D_{\mu} \psi(s,\tilde m^y_s(\omega))(x) \cdot b(x+Y_t(\omega)-y,\alpha_t(\omega)) \, \tilde m^y_s(\omega)(dx)  \notag\\
    &+ \frac{1}{2} \int_0^t \, ds \int Tr(D_{x \mu} \psi(s,\tilde m^y_s(\omega))(x) \cdot \sigma\sigma^\top(x+Y_s(\omega)-y,\alpha_s(\omega))) \, \tilde m^y_s(\omega)(dx). \notag
\end{align}

\vspace{4pt} 
\textit{Step III:} For any $y \in \R^d$, we define a process $\Psi_t(y):= \psi(t,\tilde m^y_t)$. Note that $\psi(t,m_t)=\Psi_t(Y_t)$, and for each $\omega $, $y \mapsto \Psi_t(y)$ is second-order differentiable, and hence 
\begin{align*}
    D\Psi_t(Y_t)&= \lim\limits_{h \to 0} \frac{\psi(t,(id+h)_{\sharp}m_t)-\psi(t,m_t)}{h}=D_{\mu} \psi(t,m_t)[m_t], \\
    D^2 \Psi_t(Y_t)&= \mathcal{H}\psi(t,m_t).
\end{align*}
Now we invoke It\^{o}-Wentzell formula,
\begin{align*}
d\Psi_t(Y_t)= d\psi(t,m_t)+ D\Psi_t(Y_t) \cdot \tilde \sigma(\alpha_t) \, dW_t+ \frac{1}{2} Tr(D^2\Psi_t(Y_t) \cdot \tilde \sigma \tilde \sigma^\top (\alpha_t)) \, dt.
\end{align*}
Using \eqref{eq:ito1}, we conclude the result. 
\end{proof}

The proof of the viscosity is more or less standard. Most of proof is devoted to verify the assumption for uniqueness, which relies on the fact the Hamiltonian is defined as the infimum of integrals.

\begin{proof}[Proof of Theorem~\ref{thm:viscosity_property}]
\textit{Step I: Subsolution property} Suppose $\psi$ is partially $C^2$ regular such that $v -\psi$ obtains a local maximum at a point $(t,\mu) \in [0,T) \times \Pc_2(\R^d)$ and $v(t,\mu)=\psi(t,\mu)$. It suffices to show that for any fixed $a \in A$, 
\begin{align}\label{eq:fixeda}
    -\pa_t \psi(t,\mu)\leq   K(a,\mu, D_{\mu} \psi(t,\mu), D_{x \mu} \psi(t,\mu), \mathcal{H}\psi(t,\mu)). 
\end{align}

Suppose $\delta>0$ is a fixed constant such that $v(s,\nu) \leq \psi(s,\nu)$ for all $(s,\nu)$ satisfying $|s-t|+\rho_F(\nu,\mu) \leq \delta$. Take $\alpha \equiv a \in \mathcal{A}$, $\tau_{\delta}:=\inf \{s \geq t: \rho_F(m_s^{t,\mu,\alpha},\mu) \geq \delta \}$, and a sequence of stopping times $\tau_n:=\min\{ t+ 1/n, \tau_{\delta} \}$ for all $n \geq 0$. Then for any $n \geq 1/\delta$, thanks to Proposition~\ref{prop:Lipschizvalue}, we get 
\begin{align*}
    \psi(t,\mu) \leq \E \left[\int_t^{\tau_n} f(X_s^{t,\mu,\alpha},a) \, ds + \psi(\tau_n, m^{t,\mu,\alpha}_{\tau_n}) \right].
\end{align*}
For the simplicity of notation, write $m_s$ for $m^{t,\mu,\alpha}_s$. Note that $ \int_t^s D_{\mu} \psi(u,m_u)([m_u]) \, dW_u $ is a martingale for $s \leq \tau_n$ due to Lemma~\ref{lem:measureflow_estimate} and the fact that $D_{\mu} \psi(u,m_u)(\cdot) \in B_{\mathfrak{q}}^d$. Applying Lemma~\ref{lem:Ito} to the term $\psi(\tau_n, m_{\tau_n})$, we obtain 
\begin{align*}
0 \leq n \E \left[\int_t^{\tau_n} -\pa_t \psi(s,m_s) +K(a,m_s, D_{\mu} \psi(s,m_s), D_{x \mu} \psi(s,m_s),\Hc \psi(s,m_s) ) \,ds \right].
\end{align*}
Note that $n\tau_n \to 1 \ a.s.$ as $n \to \infty$ and $\rho(m_s,\mu) \to 0 \ a.s.$ as $s \to t$ thanks to the pathwise continuity of $m_s$. Then according to the continuity of $\psi, K$, the mean value theorem, and the dominated convergence theorem, we conclude \eqref{eq:fixeda}.

\vspace{4pt}

\textit{Step II: Supersolution Property.} Take $\psi \in C^2([0,T) \times \Pc_2(\R^d); \R)$ such that $v -\psi$ obtains a local minima at a point $(t,\mu) \in [0,T) \times \Pc_2(\R^d)$ and $v(t,\mu)=\psi(t,\mu)$. Towards a contradiction, suppose that for some positive $\epsilon >0$
\begin{align*}
    -\pa_t \psi(t,\mu) \leq  \inf_{a \in A}  K(a, \mu, D_{\mu} \psi(t,\mu), D_{x \mu} \psi(t,\mu), \mathcal{H}\psi(t,\mu))-2\epsilon. 
\end{align*}

 Due to the continuity in $\mu$ and the local minimal property of $(t,\mu)$, there exists some $\delta>0$ such that for all $(s,\nu)$ with $|s-t|+\rho_F(\mu,\nu)<\delta$ 
\begin{align}\label{eq:contradict}
    -\pa_t \psi(s,\nu) \leq  \inf_{a \in A}  K(a, \nu, D_{\mu} \psi(s,\nu), D_{x \mu} \psi(s,\nu), \mathcal{H}\psi(s,\nu))-\epsilon,
\end{align}
and 
\begin{align}\label{eq:localminima}
    v(s,\nu) \geq \psi(s,\nu). 
\end{align}
For any $n \in \mathbb{N}$ with $1/n <\delta$, thanks to Proposition~\ref{prop:Lipschizvalue}, we choose a control $\alpha^n$ and a stopping time $$\tau_n:= \min\left\{t+1/n, \, \inf\{s \geq t: \, \rho(\mu,m^{t,\mu,\alpha^n}_s) \geq \delta-1/n\} \right\} $$ 
satisfying 
\begin{align*}
    v(t,\mu) \geq \E \left[\int_t^{\tau_n} f(X_s^{t,\mu,\alpha^n}) \,ds + v(\tau^n, m_{\tau_n}^{t,\mu,\alpha^n}) \right]-\frac{1}{n^2}.
\end{align*}
Using \eqref{eq:localminima} and It\^{o}'s formula Lemma~\ref{lem:Ito}, we get that
\begin{align*}
    0 \geq & \E \left[-\psi(t,\mu)+\int_t^{\tau_n} f(X_s^{t,\mu,\alpha^n}) \,ds + \psi(\tau^n, m_{\tau_n}^n) \right]-\frac{1}{n^2} \\
    =&\E \left[\int_t^{\tau_n}\pa_t\psi(s,m^n_s)+K(\alpha^n_s,m^n_s, D_{\mu}\psi(s,m^n_s), D_{x \mu} \psi(s,\nu), \mathcal{H}\psi(s,m^n_s)) \,ds \right]-\frac{1}{n^2} ,
\end{align*}
where $m_s^n$ denotes $m_s^{t,\mu,\alpha^n}$. Now it follows from the inequality \eqref{eq:contradict} that 
\begin{align*}
    0 \leq \E\left[\int_t^{\tau_n} \epsilon \, ds \right]-\frac{1}{n^2},
\end{align*}
which contradicts with the fact that  $\lim_{n \to \infty} \E[n(\tau_n-t)]=1$.

\vspace{4pt}
\textit{Step III: Uniqueness.} 
    Given the Lipschitz continuity proven in Proposition \ref{prop:Lipschizvalue}, the uniqueness is a consequence of Theorem \ref{thm:comp} if we can prove that the generator $K$ satisfies the assumption of this theorem. To check the inequality \eqref{eq:unifh1}, we estimate
\begin{align*}
    &|\inf_{a\in A} K(a,\mu,f,g,X^{22})-\inf_{a\in A}K( a,\mu,\tilde f ,\tilde g,\tilde X^{22})|\\
    &\leq\sup_{a\in A} | K(a,\mu,f,g,X^{22})-K(a, \mu,\tilde f ,\tilde g,\tilde X^{22})|\\
    &\leq C\left( \int |f(x) -\tilde f(x)|+|g(x)-\tilde g(x)|\, \mu(dx)+  |X^{22}-\tilde X^{22}|\right)\\
    &\leq C\left( (||f(x) -\tilde f(x)||_{\mathfrak{q}}+||g(x)-\tilde g(x)||_{\mathfrak{q}})(1+Tr(V(\mu))+|m(\mu)|^2)+  |X^{22}-\tilde X^{22}|\right)
\end{align*}
for a constant that only depends on the supremum of $f,b,\sigma,\tilde \sigma$.

We now verify the assumption \eqref{eq:contpsi} for the Hamiltonian $K$ by estimating 
\begin{align*}
   & \inf_a K\left(a,\mu,\frac{m(\mu-\tilde \mu)}{\e}+\frac{D_\mu \Psi(\mu)}{2\e},\frac{D_{x\mu} \Psi(\mu)}{2\e},X^{22}\right)\\
   &\quad\quad-\inf_a K\left(a,\tilde \mu,\frac{m(\mu-\tilde \mu)}{\e}+\frac{D_\mu \Psi(\mu)}{2\e},\frac{D_{x\mu}\Psi(\mu)}{2\e},-\tilde X^{22}\right)\\
   & \leq \sup_a \Big\{ \Big|\int  f(x,a)(\mu-\tilde \mu)(dx)\Big|+\frac{1}{2} Tr( \tilde \sigma \tilde \sigma^\top(a) (X^{22}+\tilde X^{22}))\Big\}\\
   &+\sup_a\left\{\Big|\int b^\top(x,a) \frac{m(\mu-\tilde \mu)}{\e} (\mu-\tilde \mu)(dx)\Big|+\Big|\int b(x,a) \frac{D_\mu \Psi(\mu)}{2\e} (\mu-\tilde \mu)(dx)\Big|\right\}\\
   &+\frac{1}{2} \sup_a\left\{\Big|\int Tr\left( \sigma \sigma^\top (x,a) \frac{D_{x\mu}\Psi(\mu)}{2\e}\right)\, (\mu-\tilde \mu)(dx) \Big|\right\}
\end{align*}
for $\mu,\tilde \mu $ so that $\rho_F(\mu,\tilde \mu)\leq C_0\e$ for some $C_0>0$. 
Denote
\begin{align*}
    I_1&:=\sup_a \Big\{ \Big|\int  \left(f(x,a)+b^\top(x,a) \frac{m(\mu-\tilde \mu)}{\e}\right)(\mu-\tilde \mu)(dx)\Big|+\frac{1}{2} Tr( \tilde \sigma \tilde \sigma^\top(a) (X^{22}-\tilde X^{22}))\Big\}\\
    I_2&:=\sup_a\left\{\Big|\int b^\top(x,a) \frac{D_\mu \Psi(\mu)}{2\e} (\mu-\tilde \mu)(dx)\Big|\right\}\\
    I_3&:=\sup_a\left\{\Big|\int Tr\left( \sigma \sigma^\top (x,a) \frac{D_{x\mu}\Psi(\mu)}{2\e}\right)\, (\mu-\tilde \mu)(dx) \Big|\right\}
\end{align*}
Right hand side of \eqref{eq:condcomp} shows that $X+\tilde X\leq 0$ for the order of symmetric matrices. Thus, $X^{22}+\tilde X^{22}\leq 0$. By Lemma \ref{lem:metricrho}, we have
\begin{align*}
I_1&\leq  C\sup_{a\in A}\left\{||f(\cdot,a)||_{\lambda} \rho_F(\mu,\tilde\mu)+\frac{\rho_F(\mu,\tilde \mu)}{\e}\sup_j||b^j(\cdot,a)||_{\lambda} \rho_F(\mu,\tilde\mu)\right\}\\
&\leq  C\left(\rho_F(\mu,\tilde\mu)+\frac{\rho^2_F(\mu,\tilde \mu)}{\e}\right).
\end{align*}
To estimate $I_2$, for given $a\in A$, define the functions
\begin{align*}
    d_1(x,a)&:= 2b^\top(x,a)\Big(V( \Sc_0(\mu))-V(\Sc_0(\tilde \mu))\Big)(x-m( \mu))\\
    d_2(x,a)&:=b^\top(x,a)\left(\int \frac{Re(i k F_k(\Sc_0(\mu))(F_k( \Sc_0(\mu))-F_k(\Sc_0(\tilde \mu)))^*)}{(1+|k|^2)^\lambda}dk\right)\\
    d_3(x,a)&:=-b^\top(x,a)\int \frac{Re(i k f_k(x-m( \mu))(F_k( \Sc_0(\mu))-F_k(\Sc_0(\tilde \mu)))^*)}{(1+|k|^2)^\lambda}dk\\
    d_4(x)&:=-\int \frac{Re(i k f_k(x)(F_k( \Sc_0(\mu))-F_k(\Sc_0(\tilde \mu)))^*)}{(1+|k|^2)^\lambda}dk
\end{align*}
so that
\begin{align*}
    \int b^\top(x,a)\frac{D_\mu\Psi(\mu)}{2\e}(\mu-\tilde\mu)(dx)=\frac{1}{\e}\int (d_1(x,a)+d_2(x,a)+d_3(x,a))(\mu-\tilde\mu)(dx)
\end{align*}
We need to estimate $\lambda$ norms of $(d_1(\cdot,a),d_2(\cdot,a),d_3(\cdot,a))$.
To bound the first two terms we use the inequalities 
\begin{align*}
 |V(\Sc_0(\mu))-V(\Sc_0(\tilde{\mu}))| &\leq \rho_F(\mu,\tilde{\mu}) \\
 \int \frac{Re(i k F_k(\Sc_0(\mu))(F_k( \Sc_0(\mu))-F_k(\Sc_0(\tilde \mu))))}{(1+|k|^2)^\lambda}dk &\leq \rho_F(\mu,\tilde{\mu}) \sqrt{\int \frac{| k |^2}{(1+|k|^2)^\lambda}dk} 
\end{align*}
to get that  
\begin{align*}
     &\frac{1}{\e}\Big|  \int (d_1(x,a)+d_2(x,a))(\mu-\tilde\mu)(dx)\big|\leq \frac{\rho_F(\mu,\tilde \mu)}{\e}\sup_{a\in A}\{||d_1(\cdot,a)||_\lambda+||d_1(\cdot,a)||_\lambda\}\\
     &\leq \frac{(1+|m(\mu)|)\rho^2_F(\mu,\tilde \mu)}{\e} \sup_{i,j,a}\{||x^i b^j(x,a)||_\lambda+||b^j(x,a)||_\lambda \}\left(1+\sqrt{\int \frac{| k |^2}{(1+|k|^2)^\lambda}dk}\right)\\
     &\leq \frac{C(1+|m(\mu)|) \rho_F^2(\mu,\tilde{\mu})}{\e}.
\end{align*}
Regarding the last term, by Lemma \ref{lem:metricrho} we also have
$$\frac{1}{\e}\Big|\int d_3(x,a)(\mu-\tilde\mu)(dx)\Big|\leq \frac{C}{\e}||d_3(\cdot,a)||_\lambda \rho_F(\mu,\tilde \mu)$$
and we need to estimate $||d_3(\cdot,a)||_\lambda.$
Since $\lambda>3$, the convexity of $x\mapsto (1+|x|^2)^{\lambda/2}$ leads to 
$$2^{-\lambda}(1+|k|^2)^{\lambda/2}\leq \left(1+\Big|\frac{k-l+l}{2}\Big|^2\right)^{\lambda/2}\leq \frac{1}{2}\left((1+|k-l|^2)^{\lambda/2}+(1+|l|^2)^{\lambda/2}\right)$$
for all $k,l\in\R^d.$
Using the convolution theorem we have
\begin{align*}
    &||d_3(\cdot,a)||^2_\lambda =\int \Big| \int(1+|k|^2)^{\lambda/2}\Fc b(k-l,a)e^{-i l^\top m(\mu)}\Fc d_4(l) \,dl\Big|^2dk\\    
    &\leq2^{\lambda-1}\int \Big| \int\left((1+|k-l|^2)^{\lambda/2}+(1+|l|^2)^{\lambda/2}\right)|\Fc b(k-l,a)||\Fc d_4(l)|\,dl\Big|^2dk\\    
    &\leq 2^{\lambda}\int \Big| \int(1+|k-l|^2)^{\lambda/2}|\Fc b(k-l,a)||\Fc d_4(l)|dl\Big|^2dk\\        
    &+2^{\lambda}\int \Big| \int(1+|l|^2)^{\lambda/2}|\Fc b(k-l,a)||\Fc d_4(l)|dl\Big|^2dk.  
    \end{align*}
    Thanks to Fourier inversion theorem, we have the explicit formula of $\mathcal{F}d_4(l)$,
$$|\mathcal{F}d_4(l)|=\frac{ |l| |F_l( \Sc_0(\mu))-F_l(\Sc_0(\tilde \mu))|}{(1+|l|^2)^{\lambda}}.$$ Thus we obtain that 
    \begin{align*}
    ||d_3(\cdot,a)||^2_\lambda   &\leq 2^{\lambda}\int  \int\frac{(1+|k-l|^2)^{\lambda}|\Fc b(k-l,a)|^2}{(1+|l|^2)^{\lambda-2}}dl dk\int \frac{ |l|^2 |F_l( \Sc_0(\mu))-F_l(\Sc_0(\tilde \mu))|^2}{(1+|l|^2)^{2\lambda-\lambda+2}}dl\\        
    &+2^{\lambda}\int \Big| \int|\Fc b(k-l,a)| \frac{ |l| |F_l( \Sc_0(\mu))-F_l(\Sc_0(\tilde \mu))|}{(1+|l|^2)^{\lambda/2}}dl\Big|^2dk \\
    &\leq 2^{\lambda}||b(\cdot,a)||^2_\lambda\rho_F^2(\mu,\tilde \mu)\int  \frac{1}{(1+|l|^2)^{\lambda-2}}dl \\        
    &+2^{\lambda}\int \Big| \int\frac{|\Fc b(k-l,a)|}{(1+|l|^2)^{\frac{d}{4}+\frac{1}{2}}}\frac{ |F_l( \Sc_0(\mu))-F_l(\Sc_0(\tilde \mu))|}{(1+|l|^2)^{\frac{\lambda}{10+d}}} \frac{ 1}{(1+|l|^2)^{\frac{\lambda}{2}-\frac{1}{2}-\frac{d}{4}-\frac{1}{2}-\frac{\lambda}{10+d}}}dl\Big|^2dk.  
\end{align*}
Denote 
\begin{align}\label{eq:defcd}
C_d=\left(\int \frac{  1}{(1+|l|^2)^{\left(\frac{\lambda}{2}-\frac{1}{2}-\frac{d}{4}-\frac{1}{2}-\frac{\lambda}{10+d}\right)\frac{20+2d}{8+d}}}dl\right)^{\frac{8+d}{10+d}}.
\end{align}
Given the value of $\lambda=d+7$, the exponent is 
\begin{align*}
    &\left(\frac{\lambda}{2}-\frac{1}{2}-\frac{d}{4}-\frac{1}{2}-\frac{\lambda}{10+d}\right)\frac{20+2d}{8+d}\\
    &=d+7-\frac{d+4}{4}\frac{20+2d}{8+d}=d-\frac{d+4}{d+8}\frac{d}{2}+7-5\frac{d+4}{8+d}>\frac{d}{2}+2
\end{align*}
and $C_d<\infty$. 
We now apply H\"{o}lder inequality to obtain that
\begin{align*}
 &\Big| \int\frac{|\Fc b(k-l,a)|}{(1+|l|^2)^{\frac{d}{4}+\frac{1}{2}}}\frac{ |F_l( \Sc_0(\mu))-F_l(\Sc_0(\tilde \mu))|}{(1+|l|^2)^{\frac{\lambda}{10+d}}} \frac{ 1}{(1+|l|^2)^{\frac{\lambda}{2}-\frac{1}{2}-\frac{d}{4}-\frac{1}{2}-\frac{\lambda}{10+d}}}dl\Big| \\
 & \leq \left|\int\frac{|\Fc b(k-l,a)|^2}{(1+|l|^2)^{\frac{d}{2}+1}} dl \right|^{1/2} \left| \int     \frac{ |F_l( \Sc_0(\mu))-F_l(\Sc_0(\tilde \mu))|^{10+d}}{(1+|l|^2)^{\lambda}}  dl \right|^{\frac{1}{10+d}} \\ 
 & \quad \ \ \times \left| \int  \frac{1}{(1+|l|^2)^{\left(\frac{\lambda}{2}-\frac{1}{2}-\frac{d}{4}-\frac{1}{2}-\frac{\lambda}{10+d}\right)\frac{20+2d}{8+d}} }dl \right|^{\frac{8+d}{20+2d}},
\end{align*}
and hence
\begin{align*}
    ||d_3(\cdot,a)||^2_\lambda &\leq C||b(\cdot,a)||^2_\lambda\rho_F^2(\mu,\tilde \mu)\\        
    &+C_d\int  \int\frac{|\Fc b(k-l,a)|^2}{(1+|l|^2)^{\frac{d}{2}+1}}dldk\left(\int\frac{ |F_l( \Sc_0(\mu))-F_l(\Sc_0(\tilde \mu))|^{10+d}}{(1+|l|^2)^\lambda}dl\right)^{\frac{2}{10+d}}. 
\end{align*}
Given the bound $|F_l( \Sc_0(\mu))-F_l(\Sc_0(\tilde \mu))|<2$, we have that 
\begin{align*}
    ||d_3(\cdot,a)||^2_\lambda &\leq C\left(||b(\cdot,a)||^2_\lambda\rho_F^2(\mu,\tilde \mu)+C_d||b(\cdot,a)||^2_{L_2}\rho_F^{\frac{2}{10+d}}(\mu,\tilde \mu)\right).
\end{align*}
Given our assumption on $b$, this implies
\begin{align*}
    \sup_{a\in A}||d_3(\cdot,a)||_\lambda &\leq C\rho_F^{\frac{1}{10+d}}(\mu,\tilde \mu)\sqrt{1+\rho_F^{\frac{18+2d}{10+d}}(\mu,\tilde \mu)}
\end{align*}
and 
\begin{align*}
I_2&\leq C\frac{\rho_F^{\frac{11+d}{10+d}}(\mu,\tilde \mu)}{\epsilon}\sqrt{1+\rho_F^{\frac{18+2d}{10+d}}(\mu,\tilde \mu)}+\frac{C(1+|m(\mu)|) \rho_F^2(\mu,\tilde{\mu})}{\e}\\
&\leq CC_0\rho_F^{\frac{1}{10+d}}(\mu,\tilde \mu)\sqrt{1+\rho_F^{\frac{18+2d}{10+d}}(\mu,\tilde \mu)}+CC_0(1+|m(\mu)|) \rho_F(\mu,\tilde{\mu})
\end{align*}

To estimate $I_3$ we proceed similarly to the estimation of $I_2$. However, in this case, the term $d_4$ is now
$$-\int \frac{Re(i kk^\top f_k(x)(F_k( \Sc_0(\mu))-F_k(\Sc_0(\tilde \mu)))^*)}{(1+|k|^2)^\lambda}dk.$$
Thus, as in the estimate of $||d_3(\cdot,a)||^2_\lambda$, we have 
$$|\Fc d_4(l)|=\frac{ |l|^2 |F_l( \Sc_0(\mu))-F_l(\Sc_0(\tilde \mu))|}{(1+|l|^2)^{\lambda}}$$ instead of 
$$\frac{ |l| |F_l( \Sc_0(\mu))-F_l(\Sc_0(\tilde \mu))|}{(1+|l|^2)^{\lambda}}$$
and $C_d$ in \eqref{eq:defcd} has to be defined with an exponent 
\begin{align}\label{eq:lambdagood}
\left(\frac{\lambda}{2}-\frac{1}{2}-\frac{d}{4}-1-\frac{\lambda}{10+d}\right)\frac{20+2d}{8+d}.
\end{align}
This exponent is still strictly larger than $\frac{d}{2}$, $C_d<\infty$, and 
\begin{align*}
I_3&\leq C\rho_F^{\frac{1}{10+d}}(\mu,\tilde \mu)\sqrt{1+\rho_F^{\frac{18+2d}{10+d}}(\mu,\tilde \mu)}+\frac{C(1+|m(\mu)|) \rho_F^2(\mu,\tilde{\mu})}{\e}\\
&\leq C\rho_F^{\frac{1}{10+d}}(\mu,\tilde \mu)\sqrt{1+\rho_F^{\frac{18+2d}{10+d}}(\mu,\tilde \mu)}+CC_0(1+|m(\mu)|) \rho_F(\mu,\tilde{\mu})
\end{align*}
which is \eqref{eq:contpsi}.

\end{proof}

\bibliographystyle{siam}
\bibliography{ref.bib}

\end{document}